\newtheorem{theorem}{Theorem}[section]
\newtheorem{lemma}[theorem]{Lemma}
\newtheorem{proposition}[theorem]{Proposition}
\newtheorem{observation}[theorem]{Observation}
\newtheorem{definition}[theorem]{Definition}
\newtheorem{conjecture}{Conjecture}
\newtheorem{question}{Question}
\newtheorem{remark}[theorem]{Remark}
\newcommand{\tef}{transcendental entire function}
\newcommand{\cstartef}{transcendental self-map of $\Cstar$}
\newcommand\qfor{\quad\text{for }}
\newcommand \C{\mathbb{C}}
\newcommand \Ch{\widehat{\mathbb{C}}}
\newcommand \Cstar{\mathbb{C}^*}
\newcommand \N{\mathbb{N}}
\newcommand \R{\mathbb{R}}
\newcommand \Z{\mathbb{Z}}
\newcommand \spw{spider's web}
\newcommand \cstarspw{$\Cstar$-spider's web}
\newcommand \cstarspws{$\Cstar$-spiders' webs}
\newcommand*{\defeq}{\mathrel{\vcenter{\baselineskip0.5ex \lineskiplimit0pt
                     \hbox{\scriptsize.}\hbox{\scriptsize.}}}%
                     =}
\newcommand*{\defqe}{=\mathrel{\vcenter{\baselineskip0.5ex \lineskiplimit0pt
                     \hbox{\scriptsize.}\hbox{\scriptsize.}}}%
                     }
\def\domain{H}
\dedicatory{This paper is dedicated to our doctoral supervisors Gwyneth Stallard and Phil Rippon, \\ with thanks for all their help and support}
\begin{document}
\bibliographystyle{amsalpha}
%
%%%%%%%%%%%%%
%
% TITLE
%
%%%%%%%%%%%%%
%
\title[Spiders' webs in $\C^*$]{Spiders' webs in the punctured plane}
\author[{V. Evdoridou \and D. Mart\'i-Pete \and D. J. Sixsmith}]{Vasiliki Evdoridou \and David Mart\'i-Pete \and David J. Sixsmith}
\address{School of Mathematics and Statistics\\ The Open University\\
Milton Keynes MK7 6AA\\ UK\textsc{\newline \indent \href{https://orcid.org/0000-0002-5409-2663}{\includegraphics[width=1em,height=1em]{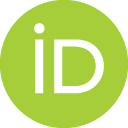} {\normalfont https://orcid.org/0000-0002-5409-2663}}}}
\email{vasiliki.evdoridou@open.ac.uk}
%\address{Department of Mathematics\\ Kyoto University\\ Kyoto 606-8502\\ Japan\textsc{\newline \indent \href{https://orcid.org/0000-0002-0541-8364}{\includegraphics[width=1em,height=1em]{Images/orcid2.png} {\normalfont https://orcid.org/0000-0002-0541-8364}}}}
\address{Institute of Mathematics of the Polish Academy of Sciences\\ ul. \'Sniadeckich 8\\
00-656 Warsaw\\ Poland\textsc{\newline \indent \href{https://orcid.org/0000-0002-0541-8364}{\includegraphics[width=1em,height=1em]{orcid2.png} {\normalfont https://orcid.org/0000-0002-0541-8364}}}}
%\email{martipete@math.kyoto-u.ac.jp}
\email{dmartipete@impan.pl}
\address{Department of Mathematical Sciences \\
	 University of Liverpool \\
   Liverpool L69 7ZL\\
   UK \newline \indent \href{https://orcid.org/0000-0002-3543-6969}{\includegraphics[width=1em,height=1em]{orcid2.png} {\normalfont https://orcid.org/0000-0002-3543-6969}}} 
\email{djs@liverpool.ac.uk}
\thanks{The first author was supported by Engineering and Physical Sciences Research Council grant EP/R010560/1. The second author was supported by the grant-in-aid 16F16807 from the Japan Society for the Promotion of Science.\vspace{3pt}\\ 2010 Mathematics Subject Classification. Primary 37F10; Secondary 30D05.\vspace{3pt}\\ Key words: holomorphic dynamics, escaping set, punctured plane, spider's web.\vspace{3pt}\\ }
%
%%%%%%%%%%%%%
%
% ABSTRACT
%
%%%%%%%%%%%%%
%
\begin{abstract}
Many authors have studied sets, associated with the dynamics of a {\tef}, which have the topological property of being a spider's web. In this paper we adapt the definition of a spider's web to the punctured plane. We give several characterisations of this topological structure, and study the connection with the usual spider's web in $\C$. 

We show that there are many transcendental self-maps of $\C^*$ for which the Julia set is such a spider's web, and we construct a transcendental self-map of $\C^*$ for which the escaping set $I(f)$ has this structure and hence is connected. By way of contrast with {\tef}s, we conjecture that there is no transcendental self-map of $\C^*$ for which the \emph{fast} escaping set $A(f)$ is such a spider's web.
\end{abstract}
\maketitle
%
%%%%%
%
%%%%%
%
\section{Introduction}
Let $S$ be either the complex plane $\C$ or the punctured plane $\C^* \defeq \C\setminus\{0\}$, and suppose that $f:S\to S$ is a holomorphic function such that $\widehat{\C}\setminus S$ consists of essential singularities of $f$, where $\widehat{\C}\defeq\C\cup\{\infty\}$. We define the \textit{Fatou set} of $f$ by
$$
F(f)\defeq\{z\in S\, :\, \{f^n\}_{n\in\mathbb{N}} \text{ is a normal family in an open neighbourhood of } z\},
$$
and we let the \textit{Julia set} be its complement in $S$; that is, $J(f)\defeq S\setminus F(f)$. We use the term \emph{Fatou component} to refer to each component of $F(f)$. 

When $S=\C$, $f$ is a transcendental entire function. There is a long history of the study of the dynamics of these functions dating back to the 1920s; see \cite{bergweiler93} for a general reference. When $S=\C^*$, we say that $f$ is a \textit{transcendental self-map of~$\C^*$}. The study of the dynamics of these maps dates back to R\r{a}dstr\"om \cite{radstrom53}, and many authors have added to this subsequently; see, for example, \cite{baker87,keen,kotus,mak87,mak91}. This paper concerns the iteration of this second class of functions, but we begin by reviewing some results for the first class of functions by way of comparison. 

For a transcendental entire function $f$, the \emph{escaping set} of $f$ is defined by
$$
I(f)\defeq\{z\in\C\, :\, f^n(z)\to\infty \text{ as } n\to\infty\}.
$$
Eremenko \cite{eremenko89} showed that all components of $\overline{I(f)}$ are unbounded, and conjectured that, in fact, all components of $I(f)$ are unbounded. This conjecture, known as \emph{Eremenko's conjecture}, is still open and has motivated much research in transcendental dynamics in the past 30 years.

It is clear that Eremenko's conjecture holds in a strong way when $I(f)$ is connected. Rippon and Stallard \cite{Fast} studied several classes of {\tef}s for which this is true, and introduced the topological notion of a \emph{spider's web}. This is defined as follows.
\begin{definition}[Spider's web]\label{def:sw}
A set $E \subset \C$ is a spider's web if $E$ is connected, and there exists a sequence $(G_n)_{n\in\N}$ of bounded simply connected domains such that
\[
\partial G_n \subset E, \ G_n \subset G_{n+1}, \text{ for } n\in\N,\quad \text{ and }\quad \bigcup_{n\in\N} G_n = \mathbb{C}.
\]
\end{definition}
Roughly speaking, a set is a spider's web if it is connected, and it contains a sequence of ``loops'' surrounding each other that tend to infinity. Rippon and Stallard showed that there are many classes of {\tef}s for which $I(f)$ is a spider's web, and so Eremenko's conjecture holds for these functions. 

In fact, Rippon and Stallard studied the following two subsets of the escaping set. Suppose that $f$ is a {\tef}. First we define the \emph{maximum modulus function} $$
M(r) \defeq M(r,f) \defeq \max_{|z|=r} |f(z)|, \qfor r\geq 0.
$$
Choose $R>0$ sufficiently large that $M^n(R)\to+\infty$ as $n\to\infty$. We then define the \emph{level set}
$$
A_R(f) \defeq \{z\in\C\, :\, |f^n(z)|\geq M^n(R) \text{ for all } n\geq 0\},
$$
and the \emph{fast escaping set}
$$
A(f) \defeq \bigcup_{\ell \geq 0} f^{-\ell} (A_R(f)).
$$
It can be shown that the definition of $A(f)$ is independent of the choice of $R$. Note that $A(f)$ is completely invariant under $f$, that is, $f^{-1}(A(f))=A(f)$, but $A_R(f)$ is not.

In \cite{rippon-stallard09} it was shown that if $A_R(f)$ is a spider's web, then so is $A(f)$, and also that if $A(f)$ is a spider's web, then so is $I(f)$. Rippon and Stallard gave a number of  conditions each of which implies that $A_R(f)$ is a spider's web and so, in particular, $I(f)$ is connected; see \cite[Theorem~1.9]{Fast}, and also see \cite{DaveSW} for many other examples. They also asked the following questions.
\begin{question}
\label{q1}
Is there a {\tef} $f$ such that $A(f)$ is a spider's web but $A_R(f)$ is not?
\end{question}
\begin{question}
\label{q2}
Is there a {\tef} $f$ such that $I(f)$ is a spider's web but $A(f)$ is not?
\end{question}
Question~\ref{q1} is still open. Question~\ref{q2} was first answered in the positive by a complicated example in \cite{ripponstallard}. Later Evdoridou \cite{Vasso1} proved that for \emph{Fatou's function} $f(z) \defeq z+1+e^{-z}$ the escaping set is a spider's web but the fast escaping set is not, hence providing a simple example of a function having this property. This result was subsequently generalised in \cite{DaveVasso} to a larger class of functions.

Our goal in this paper is to transfer this study to the class of transcendental self-maps of $\Cstar$. It can be shown that such a function $f$ can be written in the form
\begin{equation}
\label{eq:cstar-form}
f(z)=z^n\exp(g(z)+h(1/z)),
\end{equation}
with $n\in\mathbb{Z}$ and $g,h$ non-constant entire functions; see \cite[p.88]{radstrom53}. %
Every holomorphic self-map $f$ of~$\C^*$ can be semiconjugated to an entire function $\tilde{f}$ by the exponential map, that is, the following diagram commutes:
\begin{equation}
\begin{split}
\label{eq:commute}
\xymatrix{
\C \ar[r]^{\tilde{f}} \ar[d]_\exp & \C\ar[d]^\exp \\
\C^* \ar[r]_f & \C^*
}
\end{split}
\end{equation}
The function $\tilde{f}$ is called a \textit{lift} of $f$.% and it is unique up to addition of integer multiples of $2\pi i$. To each holomorphic self-map of $\C^*$ we can associate an integer $\text{ind}(f)$, the \textit{index} of $f$, which is a topological invariant and satisfies that if $\tilde{f}$ is a lift of $f$, then}
%\begin{equation}
%\label{eq:lift}
%\blue{\tilde{f}(z+2\pi i)=\tilde{f}(z)+\text{ind}(f)\cdot 2\pi i,\quad \text{ for all } z\in\C.}
%\end{equation}
%\blue{Note that $\text{ind}(f)=n$ in \eqref{eq:cstar-form}.}
%Moreover, we have $n=\text{ind}(f)$ in the formulas from the three bullet points above. Although it was used before, 

We propose the following definition of a spider's web in the punctured plane.  We say that a set $X \subset \Cstar$ is \emph{bounded in $\Cstar$} if its closure in $\Ch$ does not meet $\{0, \infty\}$; otherwise we say that $X$ is \emph{unbounded in $\Cstar$}.
\begin{definition}[$\Cstar$-spider's web]
\label{def:Cstar-sw}
We say that a set $E\subset \Cstar$ is a \emph{$\C^*$-spider's web} if $E$ is connected, and there exists a sequence of domains $(G_n')_{n\in\N}$, each of which is bounded in $\Cstar$, such that
\begin{enumerate}[(a)]
\item for each $n \in \N$, the set $G_n'$ is doubly connected and separates zero from infinity;
\item $G_n' \subset G_{n+1}'$ and $\partial G_n' \subset E$, for $n \in \N$;
\item $\bigcup_{n \in \N} G_n' = \C^*$.\label{otherthing}
\end{enumerate}
\end{definition}
Our first two results indicate that this definition of a {\cstarspw} is correct. The first shows that {\cstarspws} lift in an obvious sense; see \eqref{eq:commute}.
\begin{theorem}
\label{theo:spwthesame}
If $E$ is a {\spw}, then $E' \defeq \exp(E)$ is a {\cstarspw}. Similarly, if $E'$ is a {\cstarspw}, then $E \defeq \exp^{-1}(E')$ is a {\spw}. 
\end{theorem}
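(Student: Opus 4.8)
\emph{Proof proposal.} The statement is a pair of implications; the forward one ($E$ a \spw\ $\Rightarrow E'=\exp(E)$ a \cstarspw) I would prove by pushing the witnessing domains forward and filling holes. If $(G_n)$ witnesses that $E$ is a \spw, then $E'$ is connected as a continuous image of a connected set. Discarding finitely many $G_n$ and relabelling, I may assume $\overline{D(0,4)}\subseteq G_1$, so that each $\exp(G_n)$ is a connected open subset of $\Cstar$ which is bounded in $\Cstar$ (as $|\exp|$ is bounded above and below on the compact set $\overline{G_n}$) and which contains the unit circle (the image of a vertical segment through $0$ of length $2\pi$). Hence $\Ch\setminus\exp(G_n)$ has a component $K_0^{(n)}\ni 0$ and a distinct component $K_\infty^{(n)}\ni\infty$, and I set $G_n'\defeq\Ch\setminus\bigl(K_0^{(n)}\cup K_\infty^{(n)}\bigr)$. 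This $G_n'$ is open with complement having exactly two components, hence is doubly connected and separates $0$ from $\infty$; it is bounded in $\Cstar$ since $0$ and $\infty$ lie in the interiors of $K_0^{(n)}$ and $K_\infty^{(n)}$; it contains $\exp(G_n)$; and, using that the boundary of a complementary component of an open set lies in the boundary of that set, together with $\overline{\exp(G_n)}\subseteq\exp(\overline{G_n})=\exp(G_n)\cup\exp(\partial G_n)$,
\[
\partial G_n'\subseteq\partial K_0^{(n)}\cup\partial K_\infty^{(n)}\subseteq\partial\bigl(\exp(G_n)\bigr)\subseteq\exp(\partial G_n)\subseteq E'.
\]
Finally $G_n\subseteq G_{n+1}$ forces $\exp(G_n)\subseteq\exp(G_{n+1})$, hence $K_0^{(n+1)}\subseteq K_0^{(n)}$ and $K_\infty^{(n+1)}\subseteq K_\infty^{(n)}$, hence $G_n'\subseteq G_{n+1}'$; and $\bigcup_n G_n'\supseteq\bigcup_n\exp(G_n)=\exp(\C)=\Cstar$. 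So $E'$ is a \cstarspw.

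For the converse, let $(G_n')$ witness that $E'$ is a \cstarspw, write $K_0^{(n)},K_\infty^{(n)}$ for the components of $\Ch\setminus G_n'$ through $0$ and $\infty$, and set $E\defeq\exp^{-1}(E')$. Since the $G_n'$ increase with union $\Cstar$, the continua $K_0^{(n)}$ decrease (in $\Ch$) to $\{0\}$ and the $K_\infty^{(n)}$ decrease to $\{\infty\}$, so $\max_{K_0^{(n)}}|w|\to 0$ and $\min_{K_\infty^{(n)}\cap\Cstar}|w|\to\infty$; in particular $\overline{E'}$ meets $\{0\}$ and $\{\infty\}$. I claim every component $U$ of $\Cstar\setminus E'$ is bounded in $\Cstar$ and does not separate $0$ from $\infty$. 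Boundedness: choosing $n$ with $U\cap G_n'\neq\emptyset$, the connected set $U$ is disjoint from $\partial G_n'\subseteq E'$, so $U\subseteq G_n'$, whose closure avoids $0$ and $\infty$. For the second point: if some $U$ contained a simple closed curve winding once about $0$, then $E'$, being connected and (by the moduli estimate) meeting every punctured neighbourhood of $0$ and every neighbourhood of $\infty$, would lie entirely inside or entirely outside that curve, which is impossible. Pulling back by $\exp$: a component $W$ of $\C\setminus E=\exp^{-1}(\Cstar\setminus E')$ lies over one such $U$, and since $U$ is inessential $\exp^{-1}(U)$ is a disjoint union of congruent bounded copies of $U$, of which $W$ is one; hence every component of $\C\setminus E$ is bounded.

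Next I would show $E$ is connected. The map $\exp|_E\colon E\to E'$ is a covering map with deck group $2\pi i\Z$ — here one uses that $E$ is $2\pi i$-periodic and saturated, so $\exp|_E$ is open — and it is regular, so its components are permuted transitively by $z\mapsto z+2\pi i$. Thus $E$ is connected once one produces a connected $2\pi i$-periodic subset of $E$ (such a subset meets two components in the same $2\pi i\Z$-orbit, forcing a single component). Such a subset is $\exp^{-1}\bigl(\partial K_\infty^{(n)}\bigr)$: the continuum $\partial K_\infty^{(n)}\subseteq E'$ separates $0$ from $\infty$ in $\Cstar$, and the $\exp$-preimage of a continuum separating $0$ from $\infty$ is connected (covering-space theory together with Alexander duality, ruling out a continuous branch of $\log$ or of any $m$-th root on it, $m\ge2$). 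I expect this — establishing connectedness of $E$ through this essentiality statement — to be the main obstacle, since the natural candidate domains $\exp^{-1}(G_n')$ are unbounded (they are $2\pi i$-periodic strips), so the lift cannot be performed ``domain by domain''.

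It remains to exhibit the bounded simply connected domains witnessing that $E$ is a \spw. The cleanest route is to invoke the standard characterisation (see \cite{Fast}) that a connected subset of $\C$ all of whose complementary components are bounded is a spider's web; with the two preceding paragraphs this completes the proof. Alternatively, one can build the domains by hand: given $R>0$, choose $n$ with $\{e^{-R}\le|w|\le e^R\}\subseteq G_n'$; then each component of $\C\setminus E$ meeting $\overline{D(0,R)}$ lies over a component of $\Cstar\setminus E'$ contained in the $\Cstar$-bounded set $G_n'$, hence has diameter bounded in terms of $R$ alone, so the union of $\overline{D(0,R)}$ with all such components is bounded; filling in its unbounded complementary component and passing to a subsequence yields the required nested exhaustion of $\C$ by bounded simply connected domains with boundaries contained in $E$.
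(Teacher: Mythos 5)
Your forward direction is essentially the paper's own proof: your $G_n' \defeq \Ch \setminus (K_0^{(n)} \cup K_\infty^{(n)})$ is exactly the paper's $T(\exp(G_{n}))$, and your observation that nestedness follows automatically from $K_0^{(n+1)} \subseteq K_0^{(n)}$ and $K_\infty^{(n+1)} \subseteq K_\infty^{(n)}$ streamlines, but does not change, the paper's inductive re-indexing. The reverse direction, however, has genuine gaps. The ``standard characterisation'' you invoke in the final step --- that a connected subset of $\C$ all of whose complementary components are bounded is a spider's web --- is false, and does not appear in \cite{Fast}. For instance, set $E \defeq \C \setminus \bigcup_{n\geq 1} V_n$ with $V_n \defeq \{x+iy : 0 < x < n,\; |y - 1/n| < (10n^2)^{-1}\}$: the $V_n$ are pairwise disjoint bounded rectangles, $E$ is closed and connected, and the components of $\C\setminus E$ are precisely the $V_n$; yet every $V_n$ meets $\{|z|<2\}$, so any open $U \supset \{|z|<2\}$ with $\partial U\subset E$ must contain every $V_n$ and is therefore unbounded, so $E$ does not separate $0$ from $\infty$ and is not a spider's web. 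What \cite[Theorem~2.10]{LasseVasso} actually requires is that $E$ separate each point from $\infty$, and this is exactly what the paper produces directly, by taking the component through $\zeta$ of $\exp^{-1}(U')$ where $U'\ni\exp(\zeta)$ is the separating open set furnished by Theorem~\ref{theo:cstarspiderswebsprecise}. Your ``by hand'' variant does not repair this, because the lift of a component $U$ of $\Cstar\setminus E'$ need not have diameter controlled by $R$ alone: a thin tubular neighbourhood of a spiral that winds $N$ times about $0$ inside $\{e^{-R}\le|w|\le e^R\}$ is inessential and bounded in $\Cstar$, but its lifts have height of order $2\pi N$.

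Your connectedness argument is also genuinely different from the paper's, and the step you yourself flag as the main obstacle --- that $\exp^{-1}(\Gamma)$ is connected whenever $\Gamma\subset\Cstar$ is a continuum separating $0$ from $\infty$ in $\Ch$ --- is left unproved. The statement is true, but filling it in is not routine: one needs the Borsuk separation criterion (a continuum $\Gamma\subset\Ch$ separates $0$ from $\infty$ if and only if the inclusion $\Gamma\hookrightarrow\Cstar$ admits no continuous logarithm), care with the fibre structure of a covering restricted to a compactum that may fail to be locally connected, and the unicoherence of $\Ch$ to guarantee that $\partial K_\infty^{(n)}$ is itself a continuum, which you use without comment. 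The paper avoids all of this with an elementary device: since $E=\exp^{-1}(E')$ is saturated, any open $U\subset\C$ with $\partial U\cap E=\emptyset$ satisfies $\partial(\exp U)\cap E'=\emptyset$, so a hypothetical disconnection of $E$ --- first of the $2\pi i$-periodic set $\bigcup_n \partial(\exp^{-1}(G_n'))$, then of any remaining component --- projects under $\exp$ to a disconnection of $E'$, contradicting the hypothesis without any algebraic topology.
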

The second relates to a recent result of Evdoridou and Rempe-Gillen \cite[\mbox{Theorem~1.5}]{LasseVasso}. They showed that a connected set is a spider's web if and only if it separates each point of $\C$ from $\{\infty\}$. Here if $E, X, Y \subset \Ch$, then we say that $E$ \emph{separates $Y$ from $X$} if there is an open set $U$ containing $Y$, such that the closure of $U$ in $\Ch$ does not meet $X$, and such that $\partial U \subset E$. If $Y$ is the singleton $\{z\}$, then we omit the braces and say that $E$ separates $z$ from $X$. (Note that while this is not the standard topological definition, it can be shown that this is equivalent.) Our second result shows that an equivalent result to \cite[Theorem 1.5]{LasseVasso} holds for {\cstarspws}.
\begin{theorem}
\label{theo:cstarspiderswebs}
Let $E\subset\Cstar$ be connected. Then $E$ is a {\cstarspw} if and only if it separates each point of $\Cstar$ from $\{0, \infty\}$.
\end{theorem}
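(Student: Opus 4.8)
\emph{Strategy.} The plan is to treat the two implications separately. The forward implication is immediate: if $E$ is a {\cstarspw} with associated domains $(G_n')_{n\in\N}$, then given $z\in\Cstar$ we use $\bigcup_{n\in\N}G_n'=\Cstar$ to find $n$ with $z\in G_n'$, and then $U\defeq G_n'$ witnesses that $E$ separates $z$ from $\{0,\infty\}$: it is open, contains $z$, has $\partial U=\partial G_n'\subset E$, and its closure in $\Ch$ avoids $\{0,\infty\}$ because $G_n'$ is bounded in $\Cstar$.

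\emph{Reduction for the converse.} Now suppose $E$ is connected and separates each point of $\Cstar$ from $\{0,\infty\}$. Put $K_N\defeq\{z\in\Cstar:1/N\le|z|\le N\}$, so that $(K_N)_{N\ge1}$ is an increasing sequence of compact sets exhausting $\Cstar$, each containing the unit circle. I would reduce everything to the claim that \emph{for each $N$ there is a doubly connected domain $G$, bounded in $\Cstar$ and separating $0$ from $\infty$, with $K_N\subset G$ and $\partial G\subset E$}. Granting this, one produces the sequence required by Definition~\ref{def:Cstar-sw} by induction: let $G_1'$ be the domain for $N=1$; given $G_n'$, its closure is compact in $\Cstar$ hence lies in some $K_M$, and we take $G_{n+1}'$ to be the domain the claim provides for $K_{\max(M,n+1)}$, which then contains $G_n'$ and $K_{n+1}$. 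Since $E$ is connected by hypothesis, this exhibits $E$ as a {\cstarspw}.

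\emph{The construction.} To prove the claim, fix $N$. For each $z\in K_N$ the separation hypothesis gives an open set containing $z$ with closure compact in $\Cstar$ and boundary contained in $E$; replace it by the component $U_z$ containing $z$. Finitely many of these, say $U_{z_1},\dots,U_{z_m}$, cover the compact set $K_N$; let $W$ be the component of $U_{z_1}\cup\dots\cup U_{z_m}$ containing the connected set $K_N$. Then $\overline W\subset\overline{U_{z_1}}\cup\dots\cup\overline{U_{z_m}}$ is compact in $\Cstar$ and $\partial W\subset\partial U_{z_1}\cup\dots\cup\partial U_{z_m}\subset E$. Because $W$ contains the unit circle, and the two complementary components of the unit circle in $\Ch$ are disjoint open discs, the component $C_0$ of $\Ch\setminus W$ containing $0$ and the component $C_\infty$ containing $\infty$ are distinct. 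Set $G\defeq\Ch\setminus(C_0\cup C_\infty)$. Then $G\supset W\supset K_N$; the complement $\Ch\setminus G=C_0\sqcup C_\infty$ has exactly two components, so once $G$ is known to be a domain it is doubly connected and separates $0$ from $\infty$; $\partial G\subset\partial C_0\cup\partial C_\infty\subset\partial W\subset E$; and since $\overline W$ lies in some annulus $\{a\le|z|\le b\}$ with $0<a\le b<\infty$, the connected sets $\{|z|<a\}$ and $\{|z|>b\}\cup\{\infty\}$ lie in $C_0$ and $C_\infty$ respectively, whence $0,\infty\in\mathrm{int}(C_0\cup C_\infty)$ and $\overline G$ is a compact subset of $\Cstar$, i.e.\ $G$ is bounded in $\Cstar$.

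\emph{The main point.} The one step needing care is that $G=\Ch\setminus(C_0\cup C_\infty)$ is connected, since $E$, and with it $W$ and its complementary components, may be topologically complicated. The plan here is to use the standard fact that, for a domain $W$ and any component $D$ of $\Ch\setminus W$, one has $\partial D\subset\partial W\subset\overline W$, so that $W\cup D$ is connected; writing $G$ as the union of $W$ together with the sets $W\cup D$ as $D$ ranges over the components of $\Ch\setminus W$ other than $C_0$ and $C_\infty$ then expresses $G$ as a union of connected sets all meeting $W$, so $G$ is connected. A conceivable alternative is to reduce to the plane by proving that $\exp^{-1}(E)$ is a spider's web, using Theorem~\ref{theo:spwthesame} together with the planar characterisation (a connected set is a spider's web if and only if it separates each point of $\C$ from $\infty$); but that route additionally forces one to show that $\exp^{-1}(E)$ is connected and to control boundedness under $\exp$, so I would favour the direct argument above.
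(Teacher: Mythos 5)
Your proof is correct and follows essentially the same strategy as the paper's: you cover a compact exhaustion of $\Cstar$ by finitely many separating open sets, pass to the component containing the annulus $K_N$, and then fill in the complementary components that avoid $0$ and $\infty$ (which is exactly the paper's operator $T(\cdot)$). The paper packages this as a three-way equivalence and is terser about why the filled-in set $T(W)$ is a domain, a point you spell out, but the underlying argument is the same.
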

The concept of a {\cstarspw} is useful if there are dynamically defined and interesting sets that have this structure. In fact, there are many holomorphic self-maps of $\Cstar$ for which is a {\cstarspw}. To see this, let $\tilde{f}$ be a lift of a holomorphic self-map $f$ of $\Cstar$. Although it had been discussed earlier, Bergweiler \cite{bergweiler95} was the first to prove rigorously that $J(f)=\exp J(\tilde{f})$. It follows from Theorem~\ref{theo:spwthesame} that $J(f)$ is a {\cstarspw} exactly when $J(\tilde{f})$ is a spider's web. To give a particular example, Osborne \cite{osborne13b} showed that $J(\tilde{f})$ is a spider's web when $\tilde{f}(z) = \sin z$. Therefore, we can deduce that $J(f)$ is a {\cstarspw} for the function
\[
f(z) \defeq \exp\left(\frac{1}{2}\left(z - \frac{1}{z}\right)\right).
\]

It follows from the above that, in a sense, there seems little further to say about transcendental self-maps of $\Cstar$ whose Julia set is a {\cstarspw}. Accordingly, we now focus on the question of whether the escaping set of a transcendental self-map of $\Cstar$ can be a {\cstarspw}. First we need to define this set. The escaping set of a transcendental self-map $f$ of $\C^*$ is given by
$$
I(f)\defeq\{z\in\C^*\, :\, \omega(z,f)\subset \{0,\infty\}\},
$$
where $\omega(z,f)\defeq\bigcap_{n\in\mathbb{N}}\overline{\{f^k(z)\, :\, k\geq n\}}$, and this closure is taken in $\Ch$. See \cite{martipete1,fagella-martipete,martipete3,newpaper} for several properties about this set. Observe that, in general, we cannot assume that $I(f) = \exp I(\tilde{f})$ whenever $\tilde{f}$ is a lift of a holomorphic self-map $f$ of $\Cstar$; see~\eqref{eq:commute}. A counter-example is given when $f(z) = z e^{z-1}$ and $\tilde{f}(z) = z + e^z - 1 + 2\pi i$. Then $\tilde{f}^n(0) = 2\pi i n$, for $n \in \N$, and so $0 \in I(\tilde{f})$. However $1 = e^0$ is a fixed point of~$f$. Forthcoming work by Mart\'i-Pete \cite{davidmisc} will consider similar examples.

We then have the following, which is the main result of this paper. This result is based on a function first considered in \cite[Example~3.3]{martipete3} which has a hyperbolic Baker domain containing a right half-plane; we leave the definition of these terms for later. See Figure~\ref{fig:hyp-baker-domain}.

\begin{theorem}
\label{thm:escaping-set-sw}
There exists $\lambda_0 > 0$ such that if $\lambda \geq \lambda_0$, then the transcendental self-map of $\Cstar$ given by
\[
f_\lambda(z) \defeq \lambda z\exp(e^{-z}/z) = \lambda z\exp\left(\frac{e^{-z}-1}{z} + \frac{1}{z}\right),
\]
has the property that $I(f_\lambda)$ is a {\cstarspw}.
\end{theorem}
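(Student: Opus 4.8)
The plan is to reduce the statement to a verifiable property of a single ``fundamental'' annular structure that the iterates of $f_\lambda$ must cross. The starting point is the observation that $f_\lambda$ has a hyperbolic Baker domain $U$ containing a right half-plane $\{\operatorname{Re} z > K\}$, on which $f_\lambda^n \to \infty$; this is inherited from \cite[Example~3.3]{martipete3}. Because $f_\lambda(z) = \lambda z \exp(e^{-z}/z)$, for $|z|$ large with $\operatorname{Re} z$ large we have $f_\lambda(z) \approx \lambda z$, so the dynamics there is essentially multiplication by $\lambda$, while near the punctures $0$ and $\infty$ the factor $\exp(e^{-z}/z)$ controls the behaviour. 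The idea is to build, for each $n$, a doubly connected domain $G_n'$ separating $0$ from $\infty$, bounded in $\Cstar$, whose boundary lies in $I(f_\lambda)$, with $G_n' \subset G_{n+1}'$ and $\bigcup_n G_n' = \Cstar$; by Definition~\ref{def:Cstar-sw} together with connectedness this gives the conclusion. Equivalently, by Theorem~\ref{theo:cstarspiderswebs}, it suffices to show $I(f_\lambda)$ is connected and separates every point of $\Cstar$ from $\{0,\infty\}$, and the natural route to both is to exhibit the loops $\partial G_n'$ explicitly.

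The first main step is to find one ``large'' topological annulus $A_0$, bounded in $\Cstar$ and separating the punctures, such that $f_\lambda(A_0)$ surrounds $\overline{A_0}$ (in the sense that $A_0$ lies in the bounded-in-$\Cstar$ complementary component sandwiched by $f_\lambda(A_0)$, on the correct side), and such that $A_0 \subset I(f_\lambda)$. The candidate for $A_0$ is (a smoothing of) the boundary of a round annulus $\{r \le |z| \le \lambda r\}$ pushed into the region where $\operatorname{Re} z$ is large, closed up by arcs running around through the Baker domain; the key point is that on the part where $\operatorname{Re} z$ is large, $f_\lambda$ expands moduli by roughly the factor $\lambda$, so for $\lambda \ge \lambda_0$ large enough the image genuinely ``wraps around'' the original curve. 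This is where the hypothesis $\lambda \ge \lambda_0$ is used: we need $\lambda$ large enough to dominate the error terms coming from $e^{-z}/z$ and to force the nesting. One must check carefully that the closing-up arcs can be taken inside $I(f_\lambda)$ — here the Baker domain helps, since it is contained in $I(f_\lambda)$ and is forward invariant.

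The second step is to iterate this picture. Pull back $A_0$ under $f_\lambda$: each component of $f_\lambda^{-1}(A_0)$ that separates the punctures gives a new annulus further ``out'', and complete invariance of $I(f_\lambda)$ (more precisely, the fact that if $\omega(z) \subset \{0,\infty\}$ then the same holds for preimages) ensures these preimages also lie in $I(f_\lambda)$. One obtains a nested sequence of separating annuli whose union must exhaust $\Cstar$; exhaustion follows because the annuli march towards $0$ and $\infty$ at a definite rate governed by $\lambda$ and by the essential singularities. Taking $G_n'$ to be the doubly connected domain bounded by the $n$-th and a suitably chosen inner reference curve (or simply the domain ``between the punctures and $\partial A_n$'' on the relevant side, opened up to be doubly connected), one gets the required sequence. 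Connectedness of $I(f_\lambda)$ then comes for free: since each $\partial G_n' \subset I(f_\lambda)$ and these boundaries are linked, and since $I(f_\lambda)$ contains the connected Baker domain which meets every $G_n'$, a standard argument (as in the $\C$ case, e.g. the proof in \cite{Fast}) shows the whole set is connected, so Definition~\ref{def:Cstar-sw} is satisfied.

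The step I expect to be the main obstacle is the first one: making rigorous the claim that a single concrete annulus $A_0$ can be chosen so that $f_\lambda(A_0)$ surrounds it \emph{and} $\overline{A_0} \subset I(f_\lambda)$, simultaneously controlling the behaviour near both punctures. Near $\infty$ with $\operatorname{Re} z \to +\infty$ the map is tame, but the curve $\partial A_0$ also has to pass through regions where $e^{-z}/z$ is not small (e.g. $\operatorname{Re} z$ bounded or negative, or $|z|$ small), and there one needs either to route the curve through the Fatou set/Baker domain or to verify escape by hand. Controlling the geometry of $f_\lambda$ on these transitional arcs — ensuring the image curve still has the right winding and still lies in $I(f_\lambda)$ — is the delicate part, and is presumably where the explicit form $f_\lambda(z) = \lambda z \exp\!\big(\tfrac{e^{-z}-1}{z} + \tfrac 1z\big)$ and the largeness of $\lambda$ are exploited in detail.
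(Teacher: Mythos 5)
Your proposal takes a genuinely different route from the paper, and it runs into an obstruction that the paper itself identifies as fatal. Your central step is to produce a single topological annulus $A_0$, bounded in $\Cstar$ and separating $0$ from $\infty$, with $\overline{A_0}\subset I(f_\lambda)$ and $f_\lambda(A_0)$ surrounding $\overline{A_0}$, and then to push forward/pull back to get nested loops. But such an $A_0$ must have an inner boundary loop close to and encircling $0$, and an outer boundary loop close to $\infty$; since $0$ and $\infty$ are \emph{essential singularities} of $f_\lambda$, these loops cannot map to nearby loops. Concretely, near $0$ one has $f_\lambda(z)\approx\lambda z e^{1/z}$, so on a small circle $|z|=r$ the image modulus ranges from roughly $\lambda r e^{-1/r}$ (at $z=-r$) up to $\lambda r e^{1/r}$ (at $z=r$): the image of the inner boundary loop sprawls from arbitrarily close to $0$ out to arbitrarily close to $\infty$, and in particular crosses modulus $1$. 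This is independent of how large $\lambda$ is. Thus $f_\lambda(A_0)$ cannot be a clean annular region ``surrounding'' $\overline{A_0}$, and the nested-annulus machinery from the entire setting does not start. The paper makes precisely this observation to show that $A_R(f)$ can never be a $\Cstar$-spider's web, and explicitly remarks that the standard construction for entire functions therefore does not transfer. Routing the boundary loop through the Baker domain does not help: the Baker domain is (essentially) a right half-plane, but a loop encircling $0$ or $\infty$ must spend its whole length near the relevant puncture, well outside any fixed half-plane. Your exhaustion step is also reversed: if $f(A_0)\supset\overline{A_0}$ then the preimage components of $A_0$ sit \emph{inside} $A_0$, not ``further out''.

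The paper's actual argument avoids annuli altogether. It defines the closed set
\[
I \defeq \left\{ z \in \Cstar : \text{for all } n\in\N,\ |f^n(z)|\geq n/2 \text{ or } |f^n(z)|\leq 2/n \text{ or } f^{n+2}(z)\in H\right\},
\]
where $H=\{\operatorname{Re} z\geq 2\}$ is an absorbing region for the Baker domain, so that $I\subset I(f_\lambda)$ but $I$ is not in any sense a union of loops. The work then goes into showing that every complementary component of $I$ is bounded in $\Cstar$. Assuming an unbounded component exists, it contains a ``long'' curve avoiding $f^{-2}(H)$; a careful study of $f^{-1}(\R^+)$ (Lemma~\ref{lem:preimages of the real line} and Lemma~\ref{lem:imag parts}) shows such a curve must run through one of a family of ``channels'' near $0$ or near $\infty$ where $|f|$ is forced to be either very large or very small (Lemmas~\ref{lemm:longcurvesnearzero} and~\ref{lemm:longcurvesnearinfinity}). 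Lemma~\ref{lemm:bigplaces} then lets one iterate this inside the channels, and Lemma~\ref{lem:slow} produces a point of $I$ inside the supposedly unbounded component — a contradiction. Finally, the conclusion follows from the dynamical characterisation in Theorem~\ref{theo:dynchar}: since $I\subset I(f_\lambda)$ has only bounded-in-$\Cstar$ complementary components, $I(f_\lambda)$ separates a repelling periodic point from $\{0,\infty\}$ and hence is a $\Cstar$-spider's web. That characterisation is the key lever in the paper and is absent from your proposal.
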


\begin{figure}[ht!]
\includegraphics[width=.49\linewidth]{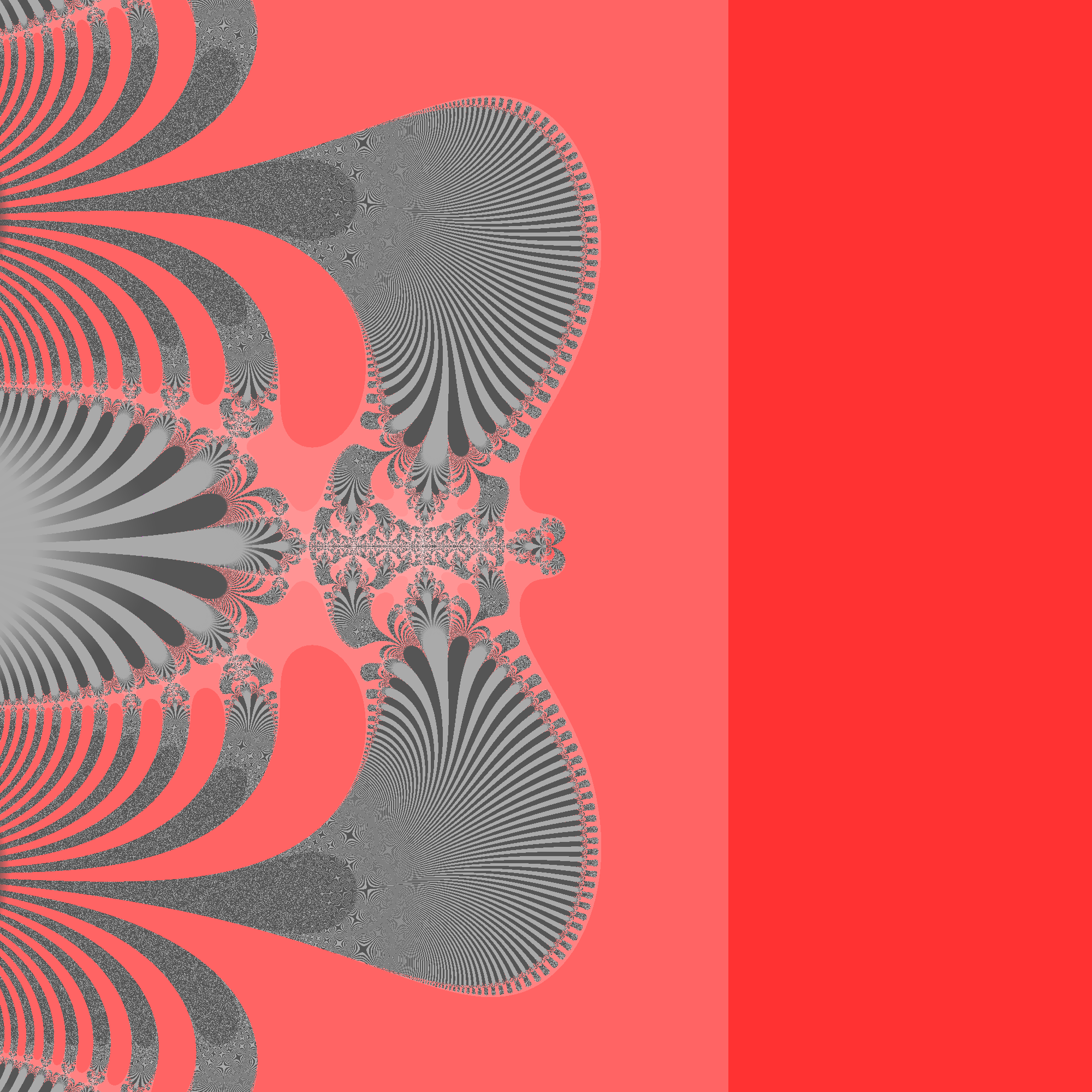}
\hspace{\fill}
\includegraphics[width=.49\linewidth]{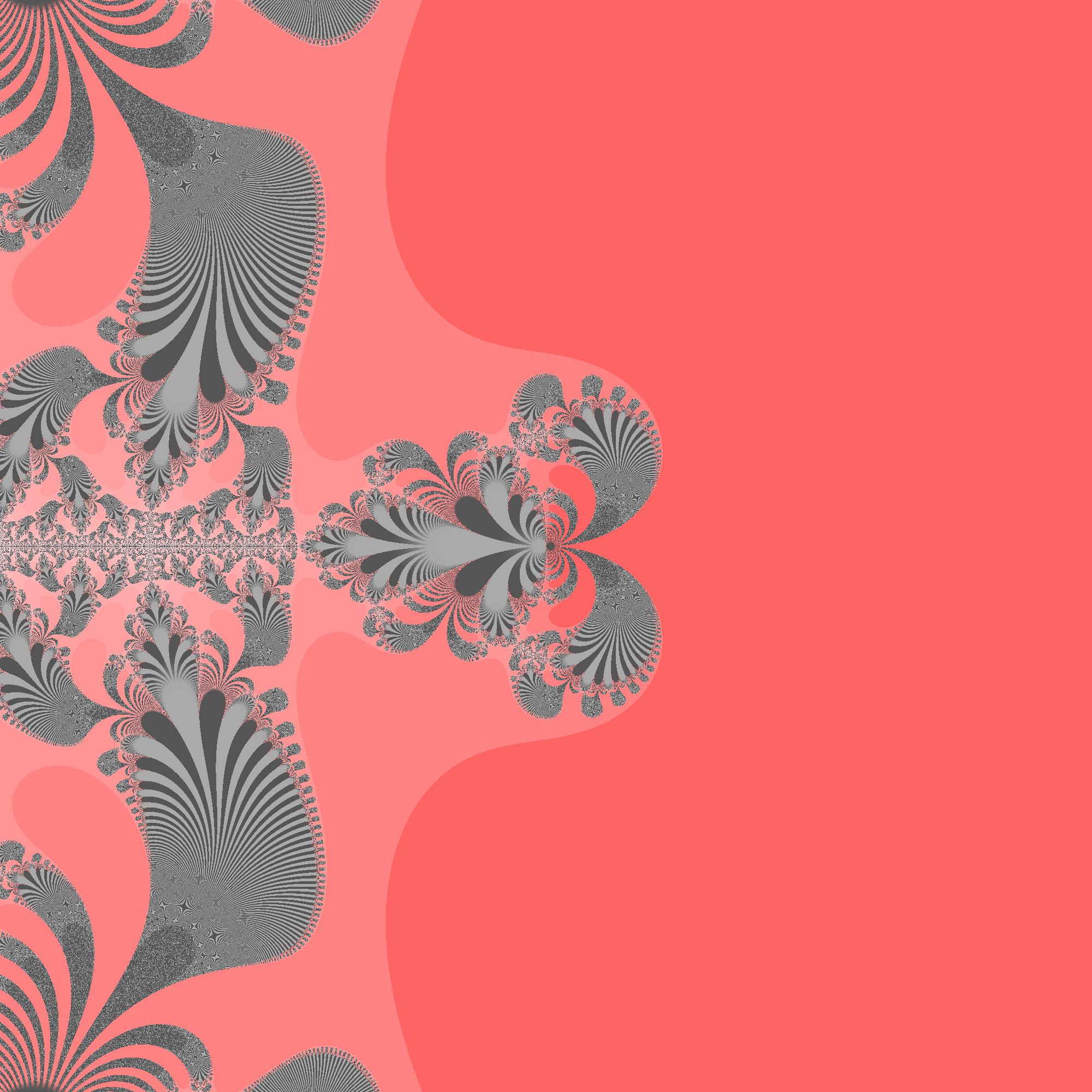}
\caption{Dynamical plane of the function $f_\lambda$ for $\lambda=32$. The Julia set is drawn in gray, and the Baker domain is coloured red. The different shades of red indicate the smallest $n$ for which $\operatorname{Re}f_\lambda^n(z) \geqslant 2$. On the left, $z\in [-6,6]+i[-6,6]$, and on the right,  $z\in [-1,1]+i[-1,1]$.}
\label{fig:hyp-baker-domain}
\end{figure}

Mart\'i-Pete also defined the fast escaping set $A(f)$ for a transcendental self-map $f$ of $\C^*$; see \cite[Definition~1.2]{martipete1}. The definition is, of necessity, quite complicated and so we omit it here. (Note that in this paper we only use the fact that the set $A(f)$ is completely invariant under $f$.) Roughly speaking, $A(f)$ contains those points which tend to $\{0, \infty\}$ eventually faster than a combination of iterates of either the maximum or the minimum modulus functions. It is then quite straightforward to see that if $f$ is the function in Theorem~\ref{thm:escaping-set-sw}, then $A(f)$ is not a {\cstarspw}; this is because there is a right half-plane in which $f$ behaves like $z \mapsto \lambda z$ and so none of these points can escape faster than the maximum modulus. Thus, this function provides a positive answer to the analogue of Question~\ref{q2} in our setting. 

It is also possible to define an analogue of the level set $A_R(f)$, though this is complicated and we refer again to \cite[Definition~1.2]{martipete1}. However, it is not difficult to show that if $f$ is a transcendental self-map of $\Cstar$, then $A_R(f)$ cannot be a {\cstarspw}. Very roughly, by way of contradiction, we let $\Gamma$ be a ``loop'' in $A_R(f)$ which surrounds and is very close to the origin. Then $f(\Gamma)$ must contain a point of very large modulus and also a point of very small modulus. Hence $f(\Gamma)$ contains a point of modulus $1$, and so the preimage of this point on $\Gamma$ cannot lie in $A_R(f)$. This gives the necessary contradiction.

Recall that all known examples of transcendental entire functions $f$ for which $A(f)$ is a spider's web have been constructed by first showing that $A_R(f)$ is a spider's web.  From the previous paragraph, it is clear that an analogous construction is not possible in the case of a transcendental self-map of $\Cstar$. In view of this fact, together with Question~\ref{q1}, we make the following conjecture.
\begin{conjecture}
If $f$ is a transcendental self-map of $\Cstar$, then $A(f)$ is not a {\cstarspw}.
\end{conjecture}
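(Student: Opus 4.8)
The plan would be to argue by contradiction, extending the argument sketched in the Introduction for why $A_R(f)$ cannot be a {\cstarspw}. So suppose $f$ is a transcendental self-map of $\Cstar$ for which $A(f)$ is a {\cstarspw}. Unwinding Definition~\ref{def:Cstar-sw} (or invoking Theorem~\ref{theo:cstarspiderswebs}), one first extracts from the exhausting family $(G_n')$ a sequence of continua $K_n\subset A(f)$, each separating $0$ from $\infty$, with $\sup\{|z|:z\in K_n\}\to 0$ as $n\to\infty$, together with a symmetric sequence $L_n\subset A(f)$ separating $0$ from $\infty$ with $\inf\{|z|:z\in L_n\}\to\infty$. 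Concretely, $K_n$ may be taken to be the frontier of the complementary component of $G_n'$ that contains $0$; this frontier lies in $\partial G_n'\subset A(f)$, separates $0$ from $\infty$, and, since $\bigcup_n G_n'=\Cstar$, these frontiers must shrink to the origin. The goal is then to derive a contradiction by showing that, after iterating, one of these separating continua is forced into $A_R(f)$ while still lying very close to the origin, or while still separating $0$ from $\infty$ — neither of which $A_R(f)$ can accommodate.

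The natural route is as follows. Write $A(f)=\bigcup_{\ell\ge0}B_\ell$, where $B_\ell\defeq\bigcup_{j\le\ell}f^{-j}(A_R(f))$ is an increasing sequence of sets closed in $\Cstar$, and recall that $A_R(f)$ is forward invariant. If one could show that, for some $N$ and some (large) $n$, the \emph{whole} continuum $K_n$ lies in $B_N$, then $f^N(K_n)\subset A_R(f)$; and writing $f(z)=z^k\exp(g(z)+h(1/z))$ as in~\eqref{eq:cstar-form}, when $k\neq0$ the image $f^N(K_n)$ still winds $k^N\neq0$ times around the origin, so it still separates $0$ from $\infty$, which is incompatible with the argument given in the Introduction for $A_R(f)$. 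When $k=0$ this last step fails — $f^N(K_n)$ need not separate $0$ from $\infty$ — and one would instead run the symmetric argument at the outer continua $L_n$, exploiting that $g$ is non-constant near $\infty$, or combine the behaviour of $f$ near $0$ and near $\infty$; this case looks harder, but should become manageable once the preceding issue is resolved.

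The step I expect to be the main obstacle is precisely forcing an \emph{entire} separating continuum, rather than merely a subarc of one, into a single $B_N$. Applying the Baire category theorem in the compact set $K_n$ to the increasing closed sets $B_\ell\cap K_n$ only produces a relatively open subset of $K_n$ — hence at best a subarc — on which the ``level'' $\ell(z)\defeq\min\{\ell\ge0:f^\ell(z)\in A_R(f)\}$ is bounded, and a subarc of $A_R(f)$ leads to no contradiction; this is exactly where the complete invariance of $A(f)$ bites, since it is only \emph{eventually}, not immediately, mapped into $A_R(f)$. Controlling the level function $\ell$ along a full separating curve, and controlling the topology of its iterated images, is the same kind of difficulty that has kept Question~\ref{q1} open for {\tef}s, so I would expect a complete proof of the conjecture to require a genuinely new idea at this point. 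Plausible alternative routes would be to pass to a lift $\tilde f$ via the semiconjugacy~\eqref{eq:commute} (keeping in mind that $A(f)\neq\exp A(\tilde f)$ in general), or to exploit finer structural results about $I(f)$ and $A(f)$ for transcendental self-maps of~$\Cstar$.
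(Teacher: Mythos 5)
The statement you are addressing is a \emph{conjecture} in the paper; the authors prove nothing here and only offer heuristic motivation, namely that $A_R(f)$ can never be a \cstarspw{} (the loop argument sketched in the Introduction) and that every known construction of a transcendental entire function with $A(f)$ a spider's web goes through first showing that $A_R(f)$ is one. So there is no ``paper's own proof'' to compare against, and a complete argument here would constitute new research.

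That said, your proposal is a sensible line of attack, and you have correctly located the genuine gap in it yourself. Writing $A(f)=\bigcup_{\ell\ge 0}B_\ell$ with $B_\ell=\bigcup_{j\le\ell}f^{-j}(A_R(f))$ closed (and bear in mind that in $\Cstar$ the level set $A_R(f)$ has a more elaborate definition than for entire functions, so this decomposition would need checking against \cite[Definition~1.2]{martipete1}), Baire category in a separating continuum $K_n\subset A(f)$ yields only a relatively open subset of $K_n$ on which the level $\ell(z)=\min\{\ell\ge 0:f^\ell(z)\in A_R(f)\}$ is bounded; nothing forces the level to be bounded on all of $K_n$, hence nothing puts the whole separating continuum into a single $B_N$, and a piece of $A_R(f)$ that fails to separate $0$ from $\infty$ produces no contradiction with the Introduction's argument. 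This is exactly the difficulty that keeps Question~\ref{q1} open for transcendental entire functions, and you rightly flag that overcoming it would require a genuinely new idea. Two secondary issues worth noting: the $K_n$ you extract are continua separating $0$ from $\infty$, not necessarily Jordan curves, so the ``winds $k^N$ times'' step should be replaced by a topological-degree or separation argument (this is fixable); and in the case $k=0$ the ``symmetric argument at $L_n$'' is not actually symmetric, since the behaviour of $f$ near $\infty$ is governed by the nonconstant $g$ in~\eqref{eq:cstar-form} in exactly the same indeterminate way as the behaviour near $0$ is governed by $h$, so that case would also require new input. In short, you have diagnosed correctly why this remains a conjecture rather than a theorem.
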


Note that our definition of a {\cstarspw} is topological. In \cite[Theorem 1.5]{Dave} it was shown that for many sets it is possible to give a dynamical definition of a spider's web; for example, if $f$ is a {\tef}, then $I(f)$ is a spider's web if and only if it separates some point of $J(f)$ from infinity. Our final result takes \cite[Theorem 1.5]{Dave} into the $\Cstar$ setting and will be the main tool to prove Theorem~\ref{thm:escaping-set-sw}.
\begin{theorem}
\label{theo:dynchar}
Suppose that $f$ is a {\cstartef}. Then $I(f)$ is a {\cstarspw} if and only if it separates some point of $J(f)$ from $\{0, \infty\}$. This statement is also true if we replace $I(f)$ with either $A(f)$ or $J(f)$.
\end{theorem}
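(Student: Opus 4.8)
The plan is to mimic the strategy behind \cite[Theorem~1.5]{Dave}, using Theorem~\ref{theo:cstarspiderswebs} as the topological engine and exploiting the complete invariance of $I(f)$, $A(f)$ and $J(f)$ under $f$. Let $E$ denote any one of these three sets. The forward implication is immediate from Theorem~\ref{theo:cstarspiderswebs}: if $E$ is a {\cstarspw}, then it separates \emph{every} point of $\Cstar$ from $\{0,\infty\}$, and since $J(f)\subset\Cstar$ is non-empty (it is non-empty for every {\cstartef}, and $J(f)\subset E$ in each of the three cases, as $I(f)\subset\overline{A(f)}$-type relations and $J(f)=\overline{I(f)}\setminus\{\text{at most countably many points}\}$, or more simply $J(f)\subset\overline{I(f)}$ and $J(f)$ is perfect), in particular $E$ separates some point of $J(f)$ from $\{0,\infty\}$. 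So the content is the reverse implication.

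For the reverse implication, suppose $E$ separates some point $z_0\in J(f)$ from $\{0,\infty\}$; by definition there is an open set $U\ni z_0$, bounded in $\Cstar$, with $\partial U\subset E$. First I would upgrade this to: $E$ separates every point of $J(f)$ from $\{0,\infty\}$. The idea is the standard blowing-up property of the Julia set: since $z_0\in J(f)$, for any $w\in J(f)$ and any neighbourhood $V$ of $w$, some iterate $f^k(V)$ covers $z_0$, hence covers $\overline{U}$ up to the (finite) exceptional set; then a component of $f^{-k}(U)$ inside $V$ is a bounded-in-$\Cstar$ open set whose boundary lies in $f^{-k}(\partial U)\subset f^{-k}(E)=E$, giving a separating loop around $w$ arbitrarily close to $w$. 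Care is needed because preimages of a set bounded in $\Cstar$ need not be bounded in $\Cstar$ (a point of $\partial U$ could pull back towards $0$ or $\infty$), so one should instead argue with the bounded-in-$\Cstar$ component of $f^{-k}(U)$ meeting $V$ and use that $f$ is proper as a map $\Cstar\to\Cstar$ to control its boundary; here the complete invariance $f^{-k}(E)=E$ (valid for $I(f)$, $A(f)$ and $J(f)$) is essential. Having shown $E$ separates every point of $J(f)$, I next extend to every point of $F(f)\cap\Cstar$: given $\zeta\in F(f)$, the boundary of its Fatou component lies in $J(f)$, so pick $w\in J(f)$ on that boundary and a separating open set $W\ni w$ with $\partial W\subset E$ and $\overline{W}$ missing $\{0,\infty\}$; because $E$ separates points of $J(f)$ arbitrarily close together, one can arrange (by a nesting/exhaustion argument as in \cite{Dave,LasseVasso}) a separating set around $\zeta$ as well — alternatively, produce an exhausting sequence of nested separating loops directly and invoke Theorem~\ref{theo:cstarspiderswebs} / Definition~\ref{def:Cstar-sw} without treating $F(f)$ pointwise. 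Once $E$ separates every point of $\Cstar$ from $\{0,\infty\}$, Theorem~\ref{theo:cstarspiderswebs} gives that $E$ is a {\cstarspw}, \emph{provided} $E$ is connected; connectedness of $I(f)$, $A(f)$, $J(f)$ in this situation follows from the existence of the nested separating doubly-connected domains whose boundaries lie in $E$, exactly as in the proof of Theorem~\ref{theo:cstarspiderswebs}.

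A cleaner way to organise the reverse direction, and the one I would actually write, is to build the sequence $(G_n')$ of Definition~\ref{def:Cstar-sw} directly: from the single separating set $U$ around $z_0\in J(f)$, use the expansion of the Julia set together with $f^{-k}(E)=E$ to produce, for each $n$, a doubly-connected domain $G_n'$ bounded in $\Cstar$, separating $0$ from $\infty$, with $\partial G_n'\subset E$, with $G_n'\subset G_{n+1}'$, and with $\bigcup_n G_n'=\Cstar$; then connectedness of $E$ drops out because any two loops $\partial G_n'$ are "linked" through the intermediate region, and separation of all of $\Cstar$ from $\{0,\infty\}$ is automatic. The main obstacle, as flagged above, is the interaction between "bounded in $\Cstar$" and taking preimages: unlike the entire-function setting where one only worries about $\infty$, here both $0$ and $\infty$ must be fenced off simultaneously, and a naive pullback of a separating loop can degenerate toward either puncture; the resolution is to always work with the component of the relevant preimage that stays in a fixed annulus $\{1/R\le|z|\le R\}$ chosen large enough to contain the original loop, using properness of $f$ on $\Cstar$ and the complete invariance of $E$ to keep the boundary inside $E$. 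Everything else — the blowing-up property of $J(f)$, the fact that $J(f)\subset\overline{I(f)}$ and $J(f)\subset\overline{A(f)}$ and $J(f)=J(f)$ so that $J(f)$ lies in the closure of each candidate $E$, and the exhaustion argument — is routine once this point is handled, and parallels \cite{Dave}.
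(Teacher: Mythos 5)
Your forward direction is fine (and the aside about whether $J(f)\subset E$ is unnecessary — all you need is that $J(f)\neq\emptyset$). For the reverse direction, however, your route differs from the paper's in a way that matters, and there are two genuine gaps.

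First, you try to produce separating loops around arbitrary points by \emph{pulling back} $U$ along $f^{-k}$, and you correctly flag that a component of $f^{-k}(U)$ can degenerate towards $0$ or $\infty$, so that it need not be bounded in $\Cstar$; but your proposed fix (``work with the component staying in a fixed annulus, use properness'') is not a repair — a component of $f^{-k}(U)$ meeting a small disc $V$ around $w$ need not have boundary inside $f^{-k}(\partial U)$ once you restrict to an annulus, and there is in general no reason for any preimage component to be bounded in $\Cstar$. The paper sidesteps this entirely by going \emph{forward}: Lemma~\ref{lem:blow-up} (the $\Cstar$ blowing-up property) gives $f^{p}(U)\supset\{1/2^{N}\le|z|\le 2^{N}\}$ for large $p$, and one takes $S_{p}\defeq\partial T(f^{p}(U))\subset f^{p}(\partial U)\subset f^{p}(X)\subset X$, using only \emph{forward} invariance. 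The nested doubly-connected $G_n'$ then come for free as the bounded-in-$\Cstar$ components of $\Ch\setminus S_n$, and one extra observation (that $f(S_n)$ joins the two components of $S_n$ for $n$ large) makes the union of the $S_n\cup f(S_n)$ connected. Forward images of $U$ are automatically bounded in $\Cstar$, so the puncture issue you worry about never arises.

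Second, and more seriously, you elide the distinction between $E$ \emph{containing} a {\cstarspw} and $E$ \emph{being} one. The construction above produces a spider's web $S\subset E$, but $E$ may a priori have other components disjoint from $S$, so $E$ need not be connected. Your claim that ``connectedness of $I(f)$, $A(f)$, $J(f)$ follows from the existence of the nested separating domains whose boundaries lie in $E$'' is false as stated: having nested loops in $E$ does not preclude extra components of $E$ floating between the loops. The paper handles this in a separate step (Proposition~\ref{prop:containsspw}): for $J(f)$ and $A(f)$ one invokes the nontrivial fact that every component of these sets is unbounded in $\Cstar$, so every component must meet the spider's web $S$; for $I(f)$ the argument is more delicate, using $J(f)=\partial A(f)$ to get $I(f)\cap J(f)\subset E$, and then a separate argument showing that every escaping Fatou component has boundary meeting $I(f)$. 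None of this is present in your sketch, and it is not ``routine.''
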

\begin{remark}\normalfont
Note that in the case of $J(g)$, where $g$ is a {\tef}, \cite[Theorem 1.5]{Dave} requires the additional hypothesis that $g$ has no multiply connected Fatou components. Although a holomorphic self-map $f$ of $\Cstar$ can have (at most one) multiply connected Fatou component, we do not require this additional hypothesis. In fact, it follows from Propositions~\ref{prop:cstargeneral} and \ref{prop:containsspw} that if $f$ has a multiply connected Fatou component, then $J(f)$ separates no point of $J(f)$ from $\{0, \infty\}$.
\end{remark}

\noindent
\textbf{Structure.} In Section~\ref{S:cstarspiderswebs} we give the proof of Theorem~\ref{theo:cstarspiderswebs}. We then use this result in Section~\ref{S:sw-defn} to prove Theorem~\ref{theo:spwthesame}. We prove Theorem~\ref{theo:dynchar} in Section~\ref{S:dynchar}, and we use this result to prove Theorem~\ref{thm:escaping-set-sw} in Section~\ref{S:escaping-set-sw}. \\

\noindent
\textbf{Notation.} Unless otherwise stated, all topological operations such as closure and boundary are taken in $\C$. Also, if $S \subset \Cstar$, then we denote by $T(S)$ the set formed by appending to $S$ all components of $\Ch \setminus S$ that do not contain either $0$ or $\infty$. \\

\noindent
\textbf{Acknowledgments.} We are grateful to the referees for their detailed reading and much helpful feedback; in particular, for comments leading to Remark~\ref{rem:newremark}.
\section{Proof of Theorem~\ref{theo:cstarspiderswebs}}
\label{S:cstarspiderswebs}
In this section we prove Theorem~\ref{theo:cstarspiderswebs}. In fact we prove a slightly more detailed result, which is analogous to \cite[Theorem 2.10]{LasseVasso}. %, though, for reasons of simplicity, we have removed some detail that could be proved. 
Note that in this result we do not assume, initially, that $E$ is connected.
\begin{theorem}
\label{theo:cstarspiderswebsprecise}
Suppose that $E\subset\Cstar$. Then the following are equivalent.
\begin{enumerate}[(a)]
\item There is a sequence of domains $(G_n')_{n\in\N}$, as in the definition of a {\cstarspw}.\label{item:spidersweb}
\item $E$ separates every compact set in $\Cstar$ from $\{0, \infty\}$.\label{item:compactseparation}
\item $E$ separates every point $z \in \Cstar$ from $\{0, \infty\}$.  \label{item:pointseparation}
\end{enumerate}
Suppose finally that one of these equivalent conditions holds. Then $E$ is connected (that is, $E$ is a \cstarspw) if and only if $E\cup\{0, \infty\}$ is connected.
%Finally, the above also holds with ``{\icstarspw}'' replaced with ``{\ocstarspw}'' and ``$0$'' replaced with ``$\infty$''.
%Suppose now that one of these equivalent conditions holds. Then the following are equivalent:
%\begin{enumerate}[(1)]
%\item $E$ is connected (that is, $E$ is a \cstarspw);\label{item:starfullspidersweb}
%\item there is a dense collection of unbounded connected subsets of $E$\margin{Not sure about this};\label{item:starunboundedcomponents}
%\item $E\cup\{0, \infty\}$ is connected. \label{item:starconnectedwithinfinity}
%\end{enumerate}
%Finally, the above also holds with ``{\icstarspw}'' replaced with ``{\ocstarspw}'' and ``$0$'' replaced with ``$\infty$''.
\end{theorem}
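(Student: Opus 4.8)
My plan is to prove the two implications separately; the forward one is formal, and essentially all the content lies in the converse. I would begin with a preliminary observation that will be used on both sides: under any of the three equivalent conditions, $0,\infty\in\overline E$ (closure in $\Ch$). Indeed, taking the domains $(G_n')$ of condition (a) and letting $K_n^0\ni 0$ and $K_n^\infty\ni\infty$ be the two complementary components of $G_n'$ in $\Ch$, the inclusion $G_n'\subseteq G_{n+1}'$ gives $K_{n+1}^0\subseteq K_n^0$, while $\bigcup_n G_n'=\Cstar$ forces $\bigcap_n K_n^0=\{0\}$; since each $\partial K_n^0$ is a non-empty subset of $\partial G_n'\subseteq E$ lying in $K_n^0$, points of $E$ accumulate at $0$, and symmetrically at $\infty$. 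For the forward implication it then suffices to note that $E\subseteq E\cup\{0,\infty\}\subseteq\overline E$, and a set lying between a connected set and its closure is connected, so $E$ connected forces $E\cup\{0,\infty\}$ connected.

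For the converse I would argue by contradiction: suppose $E\cup\{0,\infty\}$ is connected but $E=A\sqcup B$ with $A,B$ non-empty and relatively clopen in $E$ (all closures below taken in $\Ch$). Since $A,B$ are clopen in $E$ one has $\overline A\cap\overline B\cap E=\emptyset$, hence $\overline A\cap\overline B\subseteq\{0,\infty\}$; moreover, if this intersection were empty one could distribute $0$ and $\infty$ between $\overline A$ and $\overline B$ (using $\{0,\infty\}\subseteq\overline E=\overline A\cup\overline B$) to produce a separation of $E\cup\{0,\infty\}$, contrary to hypothesis. Thus the only obstruction is that $0$ and/or $\infty$ may lie in $\overline A\cap\overline B$, and it is enough to modify the separation so as to destroy these overlaps.

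The key move uses the separating property in the form (b). Suppose $0\in\overline A\cap\overline B$. Applying (b) to the circle $\{|z|=\delta\}$ (with $\delta>0$ chosen small enough that $B\not\subseteq\{|z|<\delta\}$, which is possible since $B\neq\emptyset$), passing to the component of the separating open set that contains the circle, and then applying the operation $T(\cdot)$, I obtain a doubly connected domain $W$, bounded in $\Cstar$, separating $0$ from $\infty$, with $\partial W\subseteq E$ and with inner complementary component contained in $\{|z|<\delta\}$; its inner boundary component $\gamma$ is then a continuum in $E$, contained in $\{|z|<\delta\}$, which separates $0$ from $\infty$. Let $V\ni 0$ be the bounded complementary component of $\gamma$, so $V\subseteq\{|z|<\delta\}$ and $\partial V\subseteq\gamma$; since $\gamma$ is connected it lies entirely in $A$ or entirely in $B$, say in $A$, and I replace the separation by $A':=A\cup(E\cap V)$ and $B':=B\setminus V$. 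Because $\partial V\subseteq\gamma\subseteq A$ and $V$ is open, $A'$ and $B'$ are again non-empty and relatively clopen in $E$, $\overline{B'}\subseteq\Ch\setminus V$ so $0\notin\overline{B'}$, and nothing has changed outside $\{|z|<\delta\}$, so the status of $\infty$ is unaffected. Carrying out the symmetric move near $\infty$ (applying (b) to a circle $\{|z|=R\}$ with $R$ large) then removes $\infty$ from one of the two closures. After both steps the new separation $A''\sqcup B''$ of $E$ satisfies $\overline{A''}\cap\overline{B''}=\emptyset$, and we arrive at the contradiction already isolated in the previous paragraph.

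The step I expect to be the main obstacle is the verification that $A',B'$ (and $A'',B''$) remain clopen in $E$: this rests on the facts that $\partial V\subseteq\gamma$ and that $\gamma$ is absorbed wholesale into one side, and it requires genuine care about whether closures are taken in $\C$, in $\Ch$, or in the subspace $E$, together with the bookkeeping that the move at $\infty$ does not re-introduce $0$ into the wrong closure. A secondary point, which I would expect to be available from the proof of the equivalence of (a)--(c), is that $T(\cdot)$ is well behaved, so that the inner complementary component of $W$ is a full continuum and $\gamma$ is therefore connected; without connectedness one would only be able to extract \emph{some} component of $\partial W$ separating $0$ from $\infty$, and there is no guarantee that this component is the small one near $0$, which is exactly what the refinement needs.
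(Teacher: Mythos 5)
Your proposal addresses only the final biconditional and tacitly assumes the equivalence of (a)--(c); within that scope, the high-level strategy --- modify a clopen partition of $E$ near $0$ and near $\infty$ using the separation property (b), then distribute $0$ and $\infty$ --- is a genuinely different route from the paper's. The paper instead takes disjoint open sets $U,V$ covering a disconnected $E$, observes that the connected inner and outer boundary components of $G_n'$ must each fall entirely into $U$ or $V$, and then enlarges $U$ and $V$ by the complementary components of $\Ch\setminus U$ and $\Ch\setminus V$ containing $\infty$ and $0$. Your forward implication and your preliminary observation that $0,\infty\in\overline{E}$ are both correct and match the paper.

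The converse as you have written it contains a genuine error at the step ``$\overline A\cap\overline B\cap E=\emptyset$, hence $\overline A\cap\overline B\subseteq\{0,\infty\}$''. This inference needs $\overline E\subseteq E\cup\{0,\infty\}$ in $\Ch$, i.e.\ that $E$ is closed in $\Cstar$, which is not part of the hypotheses. Concretely, let $E_0=\bigcup_{n\ge1}\{|z|=1/n\}\cup\bigcup_{n\ge1}\{|z|=n\}$ and set $E=E_0\cup\{0.3\pm i/n:n\ge10\}$; then $E$ satisfies (a)--(c) (use the annuli $\{1/n<|z|<n\}$), the set $A\defeq\{0.3+i/n:n\ge10\}$ is relatively clopen in $E$ with $B\defeq E\setminus A$ nonempty, yet $\overline A\cap\overline B=\{0.3\}\not\subseteq\{0,\infty\}$. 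So your later assertion that the two modification steps make $\overline{A''}\cap\overline{B''}$ empty is unjustified; they only remove $0$ and $\infty$ from the overlap, and other overlap points in $\Cstar\setminus E$ may persist. The argument can be repaired without changing the structure: you do not actually need $\overline{A''}\cap\overline{B''}=\emptyset$. After the two moves, $0$ lies in exactly one of $\overline{A''},\overline{B''}$ and likewise for $\infty$; attach each of $0,\infty$ to the side whose closure contains it. Since $A''$ and $B''$ are mutually separated in $\Cstar$ (each disjoint from the closure of the other), the resulting two nonempty sets remain mutually separated in $\Ch$, and their union is $E\cup\{0,\infty\}$; a union of two nonempty mutually separated sets is disconnected, giving the desired contradiction directly (no need to invoke enclosure in disjoint open sets, though that is also available since $\Ch$ is metric). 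Your worry about the clopenness of $A',B'$ is in fact handled by the observation you already made, namely $\partial V\subseteq\gamma\subseteq A$: this gives $B\setminus V=B\setminus\overline V$, so $B'$ is the intersection of $B$ with an open set as well as with a closed set, hence clopen in $E$.
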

\begin{proof}
It is immediate that \eqref{item:compactseparation} implies \eqref{item:pointseparation}. To see that \eqref{item:spidersweb} implies~\eqref{item:compactseparation}, let $(G_n')_{n \in \N}$ be the domains from the definition of a \cstarspw. Suppose that $X\subset \Cstar$ is compact; in particular $X$ is bounded away from $0$ and $\infty$, and so we can choose $n \in \N$, such that	$X \subset G_n'$. Then $G_n'$ is the necessary open set in the definition of a separation.
% Then $(G \setminus E)\cup X$ is a relatively open and closed subset of $(\C \setminus E)\cup X\cup\{0, \infty\}$, as required. 

Next suppose that~\eqref{item:compactseparation} holds. We claim that if $K \subset \Cstar$ is compact, and $0$ and $\infty$ lie in different components of $\Ch \setminus K$, then there is a domain $G=G(K)$ that is bounded in $\Cstar$, with $K\subset G$ and $\partial G\subset E$, and such that $\Ch \setminus G$ has exactly two components, one containing $0$ and one containing $\infty$. To see this we first, by assumption and by definition, let $U$ be an open set, bounded in $\Cstar$, such that $K \subset U$ and $\partial U \subset E$. We then set $G = T(U)$. 

%We claim  that for every nonempty, compact and connected
%     $K\subset\Cstar$, and such that $\Ch \setminus K$ has a component containing $0$ and another containing $\infty$, there is a $\Cstar$-bounded domain $G=G(K)$, with $K\subset G$ and $\partial G\subset E$ and such that $\Ch \setminus G$ has exactly two components, one containing $0$ and one containing $\infty$.
    
%To prove this, note first that as $E$ separates  $K$ and $\{0, \infty\}$, by definition there is a relatively closed and  open subset
%         \[U'\subset A \defeq (\C\setminus E)\cup  K \cup \{0, \infty\} \] 
%     such  that $K\subset U'\subset\C$.  
%      Let  $U\subset\Ch$ be open such that  $U'=U\cap  A$. Since $U'$ is relatively closed in $A$, 
%      we see that  $U$ is $\Cstar$-bounded and $\partial U\subset \Ch \setminus A \subset E$. 
%
%      Now let $V$ be the connected component
%      of $U$ containing $K$, and let $G=G(K)$ consist of $V$ 
%     together with all $\Cstar$-bounded complementary components. Clearly $\partial G\subset \partial U\subset E$. 

    So we can define a sequence $(G_n')_{n\in\N}$ of domains (from the definition of a {\cstarspw}) inductively as follows. First we 
      let $K_1$ be the circle $\{ z \in \Cstar : |z| = 1 \}$. Now suppose that $n \in \N$ and that $K_n$ has been defined. We set $G_n' = G(K_n)$, and we set $$K_{n+1}\defeq T(\{ z \in \C : 1/n \leq |z| \leq n \} \cup \overline{G_n'}).$$ The domains $G_n'$, $n\in\N$, satisfy the  requirements in  the definition of a \cstarspw, so \eqref{item:spidersweb} holds. 
    
		The remainder of the proof is very similar to that in \cite{LasseVasso}. 
    Suppose that \eqref{item:pointseparation} holds. Let $K\subset\Cstar$ be a compact set. Then for every
     $x\in K$ there is an open set $U(x)\subset\C$ that is bounded in $\Cstar$, such that $x\in U$ and $\partial U(x) \subset E$.
   
    Since $K$ is compact, there exist $k\in\N$ and points $x_1, \ldots, x_k \in K$ such that $$K\subset U\defeq \bigcup_{j=1}^k U(x_j).$$ 
       Clearly 
      \[ \partial  U \subset \bigcup_{j=1}^k \partial U(x_j)  \subset E, \]
      and $U$ is bounded in $\Cstar$. So $\partial U$  separates  $K$ from $\{0, \infty\}$. This completes the proof of the equivalence of the three conditions~\eqref{item:spidersweb} to~\eqref{item:pointseparation}.
	
For the final statement, suppose that \eqref{item:spidersweb} holds. In one direction, suppose that $E$ is connected. Since $E$ contains points of arbitrarily large and arbitrarily small modulus, we see that $\{0, \infty\}$ lies in the closure of $E$ in $\Ch$. Hence $E \cup \{0, \infty\}$ is connected. In the other direction, suppose that $E$ is not connected. Then there are disjoint open sets $U, V \subset \Cstar$ such that $E \subset U \cup V$ and $U, V$ each meet $E$. Without loss of generality, it follows that for all sufficiently large $n \in \N$, the inner boundary component of $G'_n$ lies in $V$ and the outer boundary component of $G'_n$ lies in $U$. (Here inner and outer boundary components are understood in the obvious sense, and recall that, by definition, $G'_n$ is doubly connected.) Let $V'$ denote the union of $V$ with the component of $\Ch \setminus V$ containing $0$, and let $U'$ denote the union of $U$ with the component of $\Ch \setminus U$ containing $\infty$. Then the open sets $U', V'$ disconnect the set $E \cup \{0, \infty\}$, as required.
\end{proof}
%
%%%%%
%
%%%%%
%
\section{Proof of Theorem~\ref{theo:spwthesame}} 
\label{S:sw-defn}
%\begin{proof}[]
Suppose first that $E$ is a spider's web, and that $(G_n)_{n\in\N}$ are the simply connected domains in the definition of a spider's web, which fill the plane and the boundary of each of which is in $E$. Since $E$ is connected, so is $E' = \exp(E)$. To show that $E'$ is a {\cstarspw}, it remains to construct the domains $(G'_n)_{n\in\N}$ in the definition. 

\pagebreak

For $R>0$, let $S_R$ denote the closed square whose centre is the origin
\[
S_R \defeq \{ z \in \C : \max\{|\operatorname{Re} z|, |\operatorname{Im} z|\} \leq R \}.
\]
Choose $R_1 > 2\pi$ and let $n_1 \in \N$ be sufficiently large that $S_{R_1} \subset G_{n_1}$. Then we set
\[
G'_1 \defeq T(\exp(G_{n_1})) \supset \exp(S_{R_1}) = \{ z \in \Cstar : e^{-R_1} \leq |z| \leq e^{R_1}\}.
\]
%We let $G'_{-1}$ be the component of $\Cstar \setminus \overline{\exp(G_{n_1}})$ that contains the origin in its closure, and let $G'_1$ be the union of $\exp(G_{n_1})$ with the closure of $G'_{-1}$. Since it is required by the definition, we also let $G'_0 = G'_1$.

Now, inductively suppose that $k \in \N$, and that $R_k, n_k$, and $G'_k$ have all been defined. Choose $R_{k+1} > R_k$ sufficiently large that $G'_k \subset \exp(S_{R_{k+1}})$. Let $n_{k+1} \in \N$ be sufficiently large that $S_{R_{k+1}} \subset G_{n_{k+1}}$. Then we set
\[
G'_{k+1} \defeq T(\exp(G_{n_{k+1}})) \supset \exp(S_{R_{k+1}}) = \{ z \in \Cstar : e^{-R_{k+1}} \leq |z| \leq e^{R_{k+1}}\}.
\]
It is then straightforward to check that the sequence $(G'_n)_{n\in\N}$ has the necessary properties, and this completes the proof in one direction.
%We let $G'_{-(k+1)}$ be the component of $\Cstar \setminus \overline{\exp(G_{n_{k+1}}})$ that contains the origin in its closure, and let $G'_{k+1}$ be the union of $\exp(G_{n_{k+1}})$ with the closure of $G'_{-(k+1)}$. This completes the required construction.

In the other direction, suppose that $E'$ is a \cstarspw, and that $(G'_n)_{n\in\N}$ are the domains in the definition of a \cstarspw. Let $E \defeq \exp^{-1}(E')$. We need to show first that $E$ is connected. Without loss of generality, we can assume that $G'_{n-1}$ is compactly contained in $G'_n$, for $n \geq 2$. 
%For each $n \geq 2$ let $G'_n$ be the annulus $G'_n \setminus \overline{G'_{n-1}}$. Note that $G'_n$ surrounds the origin. 
For $n \in \N$, the set $G_n \defeq \exp^{-1}(G'_n)$ is a $2\pi i$-periodic ``vertical strip'' the boundary of which lies in $E$. We claim first that $H \defeq \bigcup_{n\in\N} \partial G_n \subset E$ lies in one component of $E$. For, suppose not. It follows that there exists $n \in \N$ such that the two boundary components of $G_n$ lie in different components of $E$. In other words, there is an open set $U$, the boundary of which does not meet $E$, and that contains exactly one component of $\partial G_n$. Then $\exp(U)$ separates $E'$, which is a contradiction. 

Now suppose that $E$ is not connected. Let $T$ be a component of $E$ that does not meet $H$. Then there is an open set $U$, the boundary of which does not meet $E$, that contains $T$ and that lies in $G_n$, for some $n\in\N$. Then $\exp(U)$ lies in $G'_n$ and separates $E'$, which is again a contradiction. Thus $E$ is indeed connected.

Now suppose that $\zeta \in \C$. Since $E'$ is a {\cstarspw} it follows from Theorem~\ref{theo:cstarspiderswebsprecise} that there is an open set $U'$, bounded in $\Cstar$, that contains $\exp(\zeta)$ and the boundary of which lies in $E'$. Let $U$ be the component of $\exp^{-1}(U')$ containing $\zeta$. Then $U$ is a bounded open set, containing $\zeta$, the boundary of which lies in $E$. It follows by \cite[Theorem 2.10]{LasseVasso} that $E$ is a spider's web.
%
%%%%%
%
%%%%%
%
\section{Proof of Theorem~\ref{theo:dynchar}}
\label{S:dynchar}
In this section we require the following. This is the $\Cstar$ analogue of the blowing-up property, which is well-known for rational and  transcendental entire functions. In $\C^*$, this result is due to R\r{a}dstr\"om \cite[Theorem~4.1]{radstrom53}. Note that a transcendental self-map of $\Cstar$ has no Picard exceptional values.
\begin{lemma}
\label{lem:blow-up}
Suppose that $f$ is a transcendental self-map of~$\C^*$. Suppose also that $U\subset \C^*$ is an open set which meets $J(f)$, and that $K\subset \C^*$ is a compact set. Then there exists $n_0=n_0(K, U)\in\N$ such that $f^n(U)\supset K$ for all $n\geq n_0$.
\end{lemma}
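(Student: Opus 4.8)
The plan is to reduce this to the classical blowing-up property for transcendental self-maps of $\Cstar$ (R\r{a}dstr\"om's theorem), which asserts that for an open set $U$ meeting $J(f)$ and a compact set $K \subset \Cstar$, one has $f^n(U) \supset K$ for all sufficiently large $n$ \emph{provided} one uses the fact that $f$ has no Picard exceptional values in $\Cstar$ — i.e. the only omitted values are $0$ and $\infty$, which are not in $\Cstar$ anyway. So in fact the statement we want is essentially a uniform-in-$n$ restatement of what R\r{a}dstr\"om proves pointwise. The first step is to recall R\r{a}dstr\"om's result \cite[Theorem~4.1]{radstrom53} in the form: if $U$ meets $J(f)$ then $\bigcup_{n \geq 0} f^n(U) \supset \Cstar \setminus F$ for some finite exceptional set $F$, and then observe that $F = \emptyset$ here because a transcendental self-map of $\Cstar$ has no finite exceptional values in $\Cstar$ (this is the sentence already flagged in the excerpt).

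From there, the upgrade from ``every point of $K$ is eventually covered'' to ``$K$ is covered for all $n \geq n_0$ simultaneously'' is the standard compactness-plus-expansion argument. Concretely: since $U$ meets $J(f)$ and $J(f)$ is perfect and forward-invariant (up to the usual care), one can find a point $z_0 \in U \cap J(f)$ and a small disk $D \subset U$ around it. By the pointwise blowing-up property applied to $D$, for each $w \in K$ there is $m(w)$ with $w \in f^{m(w)}(D)$. One then wants these to hold on a whole neighbourhood of $n = m(w)$ and beyond. The clean way is: first show there is a single $N$ with $f^N(D) \supset \{z : 1/2 \le |z| \le 2\}$ (a fixed annulus $A$ separating $0$ from $\infty$), using R\r{a}dstr\"om plus the fact that $f^N(D)$ is open and, being a forward image of a set meeting $J(f)$, it again meets $J(f)$, so we may iterate. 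Once $f^N(D) \supset A$, apply the pointwise property again to the open set (the interior of) $A$: for each $w \in K$ there is $m$ with $w \in f^m(A)$; since $f^m(A) \supset f^{m-1}(A) \supset \cdots$ is \emph{not} automatic, instead cover $K$ by finitely many images $f^{m_1}(A), \dots, f^{m_k}(A)$ and then note that because $f(A) \supset A$ can be arranged (shrinking $A$ if necessary so that $A$ is contained in its own first image — possible since $f^N(D) \supset A$ and we can re-choose $A$ inside $f(\text{int }A)$), the images $f^m(A)$ are nested increasing, so $K \subset f^{m_0}(A)$ for $m_0 = \max m_j$, and hence $K \subset f^n(A) \subset f^{n+N}(D) \subset f^{n+N}(U)$ for all $n \ge m_0$. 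Setting $n_0 = m_0 + N$ finishes it.

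I would present this as: (i) quote R\r{a}dstr\"om, reducing to the empty-exceptional-set case; (ii) find $N$ and an annulus $A$ with $A \subset f(\operatorname{int} A)$ and $A \subset f^N(U)$; (iii) use nestedness of $(f^m(A))_m$ together with the covering of $K$ by finitely many such images to get $K \subset f^m(A)$ for all large $m$; (iv) combine. The main obstacle is step (ii): arranging an annulus $A$ that is compactly contained in its own image under $f$, so that forward iteration is monotone. This is where the topology of $\Cstar$ (doubly connected, separating $0$ from $\infty$) is used, and where one must be a little careful that the relevant images actually wrap around the puncture the right way; the hypothesis that $U$ meets $J(f)$ is exactly what drives the expansion in R\r{a}dstr\"om's theorem and guarantees such an $A$ exists after finitely many steps.
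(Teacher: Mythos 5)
The paper does not prove this lemma; the statement is quoted from R\r{a}dstr\"om \cite[Theorem~4.1]{radstrom53}, the only observation added being that a transcendental self-map of $\Cstar$ has no Picard exceptional values (so the exceptional set appearing in R\r{a}dstr\"om's theorem is empty). Thus the uniform assertion ``$f^n(U)\supset K$ for all $n\ge n_0$'' is taken as already established by the cited theorem, and your attempt to derive it from a hypothetical weaker pointwise version is not what the paper does.

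More importantly, that derivation has a genuine gap at the step you yourself flag as ``the main obstacle'': producing an annulus $A$ compactly contained in $f(\operatorname{int} A)$ and contained in some $f^N(D)$. A pointwise blow-up property gives neither of these. Knowing $f^N(D)\supset A$ says nothing about how $f(A)$ sits relative to $A$, and ``re-choosing $A$ inside $f(\operatorname{int} A)$'' is circular, since you are defining $A$ in terms of its own image. Worse, the claim that a \emph{single} $N$ satisfies $f^N(D)\supset A$ is precisely the uniform statement applied to the compact annulus: the pointwise version plus compactness only yields a finite cover of $A$ by images $f^{m_1}(D),\dots,f^{m_k}(D)$ at possibly \emph{different} times, and absent some monotonicity these cannot be collapsed to one $N$; you are implicitly assuming the conclusion. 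If you genuinely wanted to pass from a pointwise to a uniform statement, the standard device is a repelling periodic point, not an annulus. Such points are dense in $J(f)$ (this lifts from the entire-function case via $J(f)=\exp J(\tilde{f})$). Pick $z_0\in U\cap J(f)$ repelling of period $p$ and a disk $D'\ni z_0$ with $D'\subset U$ and $f^p(D')\supset\overline{D'}$; then $(f^p)^k(D')$ is increasing, and since $\bigcup_k(f^p)^k(D')=\Cstar$ by the pointwise version applied to $f^p$ (for which $J(f^p)=J(f)$ and there are again no exceptional values), compactness gives $K\subset f^{kp}(D')$ for all $k\ge M$. Running the same argument with disks $D_j'\subset f^j(D')$ about the orbit points $f^j(z_0)$, $j=0,\dots,p-1$, handles the remaining residues modulo $p$, and $n_0=\max_j(M_jp+j)$ gives the uniform conclusion. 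This is what should replace your step~(ii).
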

First we prove the following, which is quite general. Here if $f : X \to X$ is a function, a set $Y \subset X$ is \emph{forward invariant} if $f(Y) \subset Y$.
\begin{proposition}
\label{prop:cstargeneral}
Suppose that $f$ is a {\cstartef}, and that the set $X \subset \Cstar$ is forward invariant. Then $X$ \emph{contains} a {\cstarspw} if and only if it separates some point of $J(f)$ from $\{0, \infty\}$.
\end{proposition}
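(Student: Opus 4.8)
The statement is an ``if and only if'', and the two directions are of very different difficulty.

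\emph{The forward direction} is immediate: if $E\subseteq X$ is a \cstarspw, then by Theorem~\ref{theo:cstarspiderswebsprecise} the set $E$ separates every point of $\Cstar$ from $\{0,\infty\}$; picking $z_0\in J(f)$ (which is non-empty, as $f$ is a \cstartef) and taking the open set $V$ and its boundary witnessing the separation of $z_0$ by $E$, we have $\partial V\subseteq E\subseteq X$, so $X$ separates $z_0\in J(f)$ from $\{0,\infty\}$.

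\emph{For the reverse direction}, suppose $X$ separates some $z_0\in J(f)$ from $\{0,\infty\}$. Then there is an open set $O$, bounded in $\Cstar$, with $z_0\in O$ and $\partial O\subseteq X$, and we may take $O$ to be the component containing $z_0$, so that $O\cap J(f)\ne\emptyset$. First I would manufacture a good ``first loop'': by Lemma~\ref{lem:blow-up} there is $N\in\N$ with $f^{N}(O)\supseteq\overline O\cup\{z:1/2\le|z|\le2\}$, and I set $U\defeq T(f^{N}(O))$. Here the crucial point is that $f$ is a self-map of $\Cstar$, so $\overline O$ is a \emph{compact subset of $\Cstar$}, whence $f^{N}(\overline O)=\overline{f^{N}(O)}$ is compact in $\Cstar$ and $U$ is bounded in $\Cstar$; since $f$ is open and $X$ is forward invariant, $\partial U\subseteq\partial(f^{N}(O))\subseteq f^{N}(\partial O)\subseteq X$; and since $U$ contains a circle around $0$, the arguments in the proof of Theorem~\ref{theo:cstarspiderswebsprecise} show that $U$ is doubly connected and separates $0$ from $\infty$. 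Iterating (applying Lemma~\ref{lem:blow-up} to $U$, then to its successor, and so on, feeding in ever larger round annuli) I would obtain domains $W_1\subseteq W_2\subseteq\cdots$, each bounded in $\Cstar$, doubly connected, separating $0$ from $\infty$, with $\overline{W_k}\subseteq W_{k+1}$, $\partial W_k\subseteq X$ and $\bigcup_k W_k=\Cstar$; in particular these verify conditions (a)--(c) of Definition~\ref{def:Cstar-sw}, and one also deduces along the way that $X$ separates every point of $\Cstar$ from $\{0,\infty\}$.

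The remaining, and in my view the only genuinely hard, point is the connectedness requirement in Definition~\ref{def:Cstar-sw}: $\bigcup_k\partial W_k$ is, in general, a disjoint union of essential loops and so is \emph{not} a \cstarspw, which must in addition contain a ``spine'' running from a neighbourhood of $0$ to a neighbourhood of $\infty$. To build such a spine inside $X$ I would use the dynamics once more: after the reductions above one can arrange that $X$ contains a \emph{continuum} $\Gamma$ (the boundary, or a boundary component, of a suitable doubly connected domain), and then each $f^{n}(\Gamma)\subseteq X$ is a continuum which, for $n$ large, contains the whole boundary of $T(f^{n}(O))$ --- hence meets $\{|z|<\varepsilon\}$ and $\{|z|>R\}$ for prescribed $\varepsilon,R$, threads through every intermediate modulus, and separates $0$ from $\infty$. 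The plan is to run the iteration along a sequence of exponents $n_1<n_2<\cdots$ chosen so that the continua $f^{n_k}(\Gamma)$ overlap successively --- exploiting the freedom in Lemma~\ref{lem:blow-up} and the nesting $\overline{W_k}\subseteq W_{k+1}$ --- so that $E\defeq\bigcup_k f^{n_k}(\Gamma)$ is a \emph{connected} subset of $X$ containing the boundaries of cofinally many of the $W_k$; by Theorem~\ref{theo:cstarspiderswebsprecise}, $E$ is then a \cstarspw{} contained in $X$. I expect the overlap of consecutive continua --- equivalently, the precise description of the connected subset of $X$ serving as the spine --- to be the main obstacle; the rest is the by-now-standard interplay between the operation $T$, the openness of $f$, forward invariance, and Lemma~\ref{lem:blow-up}.
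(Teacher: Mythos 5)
Your forward direction and the construction of the nested annular domains $(W_k)$ match the paper's argument closely: the paper starts from an open set $U$ bounded in $\Cstar$ meeting $J(f)$ with $\partial U \subset X$, and builds $S_n \defeq \partial T(f^{p_n}(U))$ inductively via Lemma~\ref{lem:blow-up}, which is essentially your sequence of loops. Your diagnosis that $\bigcup_k \partial W_k$ is not yet connected and that a connecting ``spine'' is needed is exactly right --- this is indeed the crux of the proof. However, your proposed spine has a genuine gap. You take $\Gamma$ to be ``the boundary, or a boundary component, of a suitable doubly connected domain'' and assert that $f^{n}(\Gamma)$ contains $\partial T(f^{n}(O))$. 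For a single boundary component of a doubly connected domain this is false: both $f^{n}(\Gamma)$ and $\partial T(f^{n}(O))$ are subsets of $f^{n}(\partial O)$, but in general different ones. The claim can be rescued by instead taking $\Gamma \defeq \partial T(O)$ with $O$ a component of the separating open set, so that $T(O)$ is simply connected and $\Gamma$ is a continuum in $X$ with $f^{n}(\Gamma) \supset \partial T(f^{n}(T(O)))$; but then $\Gamma$ is not what you describe, and you still have to force the overlaps of consecutive continua $f^{n_k}(\Gamma)$, which you acknowledge as the unresolved obstacle.

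The paper avoids that bookkeeping with a different connector: it sets $S \defeq \bigcup_{n\geq n_0}(S_n \cup f(S_n))$, and observes that each of the two components of $S_n$ separates $0$ from $\infty$ and, for $n$ large, lies very close to one of the essential singularities $0,\infty$; hence its image under a \emph{single} application of $f$ is a continuum containing points of both very small and very large modulus, which therefore crosses the nested loops and links them up. Forward invariance of $X$ guarantees $f(S_n)\subset X$. So the two approaches differ in the connecting mechanism (single forward images of each loop versus long iterated images of one fixed continuum); the paper's choice makes the modulus control, and hence the pairwise intersections, essentially automatic, whereas your version would require choosing the $n_k$ carefully and proving that each $f^{n_k}(\Gamma)$ sweeps through the modulus range occupied by $f^{n_{k+1}}(\Gamma)$.
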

\begin{proof}
One direction is immediate; it follows from the definitions that a set separates every point of $\Cstar$ from $\{0, \infty\}$ if it contains a {\cstarspw}.

In the other direction, suppose that $X$ separates a point of $J(f)$ from $\{0, \infty\}$. In other words, there is an open set $U$, bounded in $\Cstar$, that meets $J(f)$, and the boundary of which lies in $X$. We now inductively construct a sequence $(S_n)_{n\in\N}$ of subsets of $X$, each of which is bounded in $\Cstar$.
%, and are such that $T \defeq \bigcup_{n\in\N} S_n \subset X$ is a {\cstarspw}, which completes the proof.

For the first step in the induction, by Lemma~\ref{lem:blow-up}, there exists $p_1 \in \N$ such that \[\{ z \in \Cstar : 1/2 \leq |z| \leq 2 \} \subset f^{p_1}(U).\] We set $S_1 \defeq \partial T(f^{p_1}(U)) \subset \partial f^{p_1}(U)\subset f^{p_1}(\partial U)\subset f^{p_1}(X)\subset X$.

Now suppose that $S_n$ has been defined for some $n \in \N$. Choose $N = N(n) \in \N$ sufficiently large that
\[
S_n \subset \{ z \in \Cstar : 1/2^{N} \leq |z| \leq 2^{N} \}.
\]
Next, by Lemma~\ref{lem:blow-up} again, there exists $p_{n+1} \in \N$ such that \[\{ z \in \Cstar : 1/2^N \leq |z| \leq 2^N \} \subset f^{p_{n+1}}(U). \] We set $S_{n+1} \defeq \partial T(f^{p_{n+1}}(U))\subset X$.

Now, consider the image $f(S_n)$, for $n \in \N$. Observe that $S_n$ has two components, and if $n$ is sufficiently large one component contains only points of small modulus, and the other component contains only points of large modulus. Hence, if $n$ is sufficiently large, the image of each of these components contains a point of very small modulus and another point of very large modulus. Thus there exists $n_0 \in \N$ such that the set 
\[
S \defeq \bigcup_{n = n_0}^\infty (S_n \cup f(S_n)),
\]
is connected. Note that since $X$ is forward invariant we have that $S \subset X$.

We claim that $S$ is a {\cstarspw}. To see this, in the definition of a {\cstarspw} we let $G_n'$ be the component of $\Ch \setminus S_n$ that is bounded in $\Cstar$. It is easy to see that these sets have the necessary properties.
\end{proof} 

\begin{remark}\normalfont
\label{rem:newremark}
Note that Proposition~\ref{prop:cstargeneral} does not have an analogue for transcendental entire functions in general. To see this, suppose that $f$ is a transcendental entire function with a multiply connected Fatou component. It is well-known, see, for example, \cite{bakermconn}, that there is a Jordan curve $\gamma \subset F(f) \cap I(f)$ such that $f^n(\gamma)$ surrounds $f^{n-1}(\gamma)$, for $n \in \N$. Then $X \defeq \bigcup_{n=0}^\infty f^n(\gamma)$ is a forward invariant set that separates every point of $J(f)$ from infinity. However, $X$ does not contain a spider's web.
\end{remark}

\pagebreak

We now show that, similarly as for a {\tef}, if the escaping set contains a {\cstarspw}, then it is a {\cstarspw}.
\begin{proposition}
\label{prop:containsspw}
Suppose that $f$ is a {\cstartef}. If $I(f)$ contains a {\cstarspw}, then $I(f)$ is a {\cstarspw}. This statement is also true if we replace $I(f)$ with either $A(f)$ or $J(f)$. 
\end{proposition}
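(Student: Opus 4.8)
The plan is to reduce the statement to the connectedness of the set in question, and then to obtain connectedness from the blowing-up property. Throughout, write $X$ for whichever of $I(f)$, $A(f)$, $J(f)$ is under consideration; each of these is completely invariant, and in particular forward invariant. Since $X$ contains a {\cstarspw}, Proposition~\ref{prop:cstargeneral} shows that $X$ separates every point of $\Cstar$ from $\{0,\infty\}$, so by Theorem~\ref{theo:cstarspiderswebsprecise} there is a sequence of domains $(G'_n)_{n\in\N}$ as in Definition~\ref{def:Cstar-sw} with $\partial G'_n\subset X$; moreover, by the final clause of that theorem, $X$ is a {\cstarspw} if and only if $X$ is connected (equivalently, if and only if $X\cup\{0,\infty\}$ is connected). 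So it remains only to show that $X$ is connected.

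Suppose, for a contradiction, that $X$ is disconnected. Since $X$ satisfies condition~\eqref{item:spidersweb} of Theorem~\ref{theo:cstarspiderswebsprecise}, the set $X\cup\{0,\infty\}$ is also disconnected, so we may write $X\cup\{0,\infty\}=C\cup D$ with $C,D$ non-empty, disjoint and relatively clopen; since $X$ contains a connected {\cstarspw} $E$, we may take $E\cup\{0,\infty\}\subset C$. As $D$ is separated from $\{0,\infty\}$, its closure in $\Ch$ is compact and contained in $\Cstar$, so $D$ is compact and there is $n_0\in\N$ with $D\subset G'_{n_0}$. Since $D$ is moreover a compact, relatively clopen subset of $X$, we have $\varepsilon\defeq\operatorname{dist}(D,X\setminus D)>0$.

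Now I would use Lemma~\ref{lem:blow-up}. Choose $w_0\in(X\setminus D)\cap J(f)$; such a point exists because $X$ meets $J(f)$ in a set that is unbounded in $\Cstar$, while $D$ is bounded in $\Cstar$. Let $U$ be an open disc about $w_0$ with $\overline U$ compact in $\Cstar$ and $\operatorname{diam} U<\varepsilon$, so that $U\cap X\subset X\setminus D$; note $U$ meets $J(f)$. By Lemma~\ref{lem:blow-up} there is $N\in\N$ with $f^N(U)\supset\overline{G'_{n_0}}$, hence $f^N(U)\supset D\cup\partial G'_{n_0}$. Since $D\cup\partial G'_{n_0}\subset X$ and $X$ is completely invariant, any point of $U$ that $f^N$ sends into $D\cup\partial G'_{n_0}$ already lies in $U\cap X\subset X\setminus D$; therefore $f^N\bigl(U\cap(X\setminus D)\bigr)\supset D\cup\partial G'_{n_0}$, so $f^N$ carries the set $U\cap(X\setminus D)$ onto a set meeting both $D$ and $X\setminus D$ (recall $\partial G'_{n_0}\subset E\subset X\setminus D$). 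If from this one can produce a \emph{connected} subset of $X$ that meets both $D$ and $X\setminus D$, then, since $D$ and $X\setminus D$ are separated relatively clopen subsets of $X$ and $f^N(X)\subset X$, we have the desired contradiction.

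The step I expect to be the main obstacle is exactly this last one: extracting a connected subset of $X$ crossing the separation. The trouble is that $J(f)$ need not be locally connected, so $U\cap(X\setminus D)$ may be badly disconnected and one cannot simply pass to a component of it. My plan for getting round this is to work on the side of the disc $U$: $f^N\colon U\to f^N(U)$ is an open holomorphic map of a simply connected domain whose image contains the continuum $\overline{G'_{n_0}}$, which separates $0$ from $\infty$. Taking a core curve $\gamma$ of $G'_{n_0}$ and perturbing it slightly to avoid the at most countably many critical values of $f^N$, the preimage $f^{-N}(\gamma)\cap U$ is a one-dimensional submanifold; an innermost component of it that were a Jordan curve would bound a disc in $U$ mapped by $f^N$ onto one of the two complementary domains of $\gamma$, which is unbounded in $\Cstar$---impossible, since the closure of that disc is compact in $\Cstar$. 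So all such components are arcs, and, arranging via the blow-up that a suitable subdomain of $U$ is mapped properly onto a doubly connected domain slightly larger than $\overline{G'_{n_0}}$ (so that the relevant arcs lift without leaving $U$), one lifts an arc in $\overline{G'_{n_0}}$ running from a point of $D$ to a point of $\partial G'_{n_0}$ to a continuum $\Gamma\subset\overline U$; since $\overline U\cap X\subset X\setminus D$ while $f^N(\Gamma)$ meets both $D$ and $\partial G'_{n_0}\subset X\setminus D$, following the separation back along $\Gamma$ through $f^N$ yields the required connected subset of $X$. An alternative route---at least when $X=J(f)$---is to invoke that $J(f)\cup\{0,\infty\}$ is connected in order to exclude $D$ directly, dealing with $X=I(f)$ and $A(f)$ via the structure of their Fatou components. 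In any case, making one of these arguments fully rigorous is where the real work of the proof lies.
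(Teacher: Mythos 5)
Your plan of reducing to connectedness via Theorem~\ref{theo:cstarspiderswebsprecise} and then using the blowing-up property is a reasonable opening move, but the argument has a genuine gap exactly where you flagged it, and I do not think it can be patched along the lines you sketch. The difficulty is that producing a \emph{connected} subset of $X$ that crosses the clopen partition $\{C,D\}$ is not something the blowing-up property gives you. Even after arranging $f^N(U)\supset\overline{G'_{n_0}}\supset \overline D$, the set $U\cap(X\setminus D)$ may be totally disconnected, and the lifting device you propose does not save the day: the arc in $\overline{G'_{n_0}}$ from a point of $D$ to a point of $\partial G'_{n_0}$ lies in $X$ only at its two endpoints, so any lift $\Gamma\subset\overline U$ is a continuum whose \emph{interior} points have no reason to belong to $X$; complete invariance pulls individual points of $X$ back into $X$, but not whole curves. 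And ``following the separation back through $f^N$'' would require $f$ to respect the partition $\{C,D\}$, which forward invariance of $X$ does not give — $f$ can and generally does mix the pieces. (There is also a minor slip: $D$ need not be compact when $X$ is not closed, though $\overline D$ is a compact subset of $\Cstar$, and that would suffice for the distance estimate.)

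The paper's proof takes a quite different route and, crucially, does not attempt a uniform topological argument for all three sets. For $A(f)$ and $J(f)$ it invokes the structural theorems that every component of $A(f)$ (resp.\ $J(f)$) is unbounded in $\Cstar$; combined with the existence of the nested annular boundaries $\partial G'_n\subset X$, this forces every component of $X$ to meet, and hence be absorbed into, the given {\cstarspw}, so $X$ is connected. No such statement is available for $I(f)$ — whether every component of $I(f)$ is unbounded in $\Cstar$ is precisely the $\Cstar$-analogue of Eremenko's conjecture — so the paper instead assumes the {\cstarspw} $E$ is a component of $I(f)$, shows $A(f)\subset E$ using the unboundedness of components of $A(f)$, then uses $J(f)=\partial A(f)$ to get $I(f)\cap J(f)\subset E$, and finally handles $I(f)\cap F(f)$ by showing that any escaping Fatou component has boundary meeting $I(f)$ (hence $E$). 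These dynamical facts, not the blowing-up property, are what carry the proof; your alternative remark at the end gestures in this direction but does not develop it, and the blowing-up approach as written does not close.
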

\begin{proof}
The cases of $A(f)$ and $J(f)$ are immediate, since each component of these sets is unbounded in $\Cstar$; see \cite[Theorem 1.5]{martipete1} and \cite[Theorem 2]{baker-dominguez98}, respectively.

The technique for $I(f)$ is inspired by the remark \cite[p.807]{Fast}. Suppose that $I(f)$ contains a {\cstarspw}, say $E$. We can assume that $E$ is a component of $I(f)$. Since all components of $A(f)$ are unbounded in $\Cstar$, we see that each component of $A(f)$ meets $E$. It follows that $A(f) \subset E$, since $E$ is a component of $I(f)$. We show that, in fact, $I(f) = E$, which completes the proof.

Suppose that $z \in I(f) \cap J(f)$. Since $J(f) = \partial A(f)$ \cite[Theorem 1.3]{martipete1}, it follows that $$z\in\partial A(f) \subset \overline{E}.$$ Hence $E \cup \{z\}$ is connected, and so, since $E$ is a component of $I(f)$, we can deduce that $I(f) \cap J(f) \subset E$.

It remains to show that $I(f) \cap F(f) \subset E$. First we show that if $V$ is a Fatou component that meets $I(f)$, then $\partial V$ meets $I(f)$. Clearly if $\partial V \subset I(f)$, then there is nothing to prove. Otherwise there is a point $z\in\partial V$ that is not in $I(f)$. For $n \in \N \cup \{0\}$, let $V_n$ be the Fatou component containing $f^n(V)$. Let $(G_n')_{n\in\N}$ be the sequence of domains in the definition of a {\cstarspw}. It follows that there exist $n_0, n \in \N$ such that $f^n(z) \in G_{n_0}' \cap \partial V_n$ and $f^n(V) \cap (\Cstar \setminus G_{n_0}') \ne \emptyset$. Note that every neighbourhood of either $0$ or $\infty$ contains points of $J(f)$, so $V_n$ cannot be a punctured neighbourhood of either of these points. We can deduce that $\partial V_n$ meets $\partial G_{n_0}' \subset E$. Thus $\partial V$ meets $I(f)$, as required.

Let $V$ be a Fatou component in $I(f)$, so that $\partial V$ meets $I(f)$. As $\partial V \subset J(f)$, we know that $\partial V \cap E \ne \emptyset$. The result is now immediate.
\end{proof}

Recall that $I(f)$, $A(f)$ and $J(f)$ are all completely invariant. It follows, then, that Theorem~\ref{theo:dynchar}, is an immediate consequence of Proposition~\ref{prop:cstargeneral} and Proposition~\ref{prop:containsspw}.

%
%%%%%
%
%%%%%
%
\section{Proof of Theorem~\ref{thm:escaping-set-sw}}
\label{S:escaping-set-sw}
We use the following result \cite[Lemma 4.1]{martipete1}, which is a version of \cite[Lemma 1]{slow}.
\begin{lemma}
\label{lem:slow}
Suppose that $(E_n)_{n \geq 0}$ is a sequence of compact subsets of $\Cstar$, and that $f : \Cstar \to \Cstar$ is a continuous function such that
$f(E_n) \supset E_{n+1}$, for $n \geq 0$. Then there exists $z \in E_0$ such that $f^n(z) \in E_n$, for $n \in \N$.
\end{lemma}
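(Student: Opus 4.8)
The plan is to run the standard ``pullback'' (diagonal) argument for nested compact sets. For $N \in \N \cup \{0\}$, set
\[
F_N \defeq \{\, z \in E_0 : f^n(z) \in E_n \text{ for } 0 \leq n \leq N \,\} = E_0 \cap \bigcap_{n=1}^N (f^n)^{-1}(E_n).
\]
Since each $E_n$ is compact, hence closed in $\Cstar$, and $f$ (so also each $f^n$) is continuous, every set $(f^n)^{-1}(E_n)$ is closed in $\Cstar$; thus $F_N$ is a closed subset of the compact set $E_0$, and therefore compact. Plainly $F_{N+1} \subset F_N$. Any $z \in \bigcap_{N \geq 0} F_N$ satisfies the conclusion of the lemma, so it suffices to show each $F_N$ is nonempty, because a decreasing sequence of nonempty compact subsets of the Hausdorff space $\Cstar$ has nonempty intersection (equivalently, the $F_N$ are closed subsets of the compact set $E_0$ and have the finite intersection property).

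To produce a point of $F_N$ I would pull the hypothesis $f(E_n) \supset E_{n+1}$ back down from level $N$ to level $0$. First note that each $E_n$ is nonempty; this is implicit in the statement, as otherwise the conclusion could not hold. Put $A_0 \defeq E_N$, and, for $1 \leq k \leq N$, put $A_k \defeq E_{N-k} \cap f^{-1}(A_{k-1})$. By induction on $k$ the sets $A_k$ are nonempty: if $A_{k-1} \neq \emptyset$ then, since $A_{k-1} \subset E_{N-k+1} \subset f(E_{N-k})$, some point of $E_{N-k}$ is mapped by $f$ into $A_{k-1}$, and such a point lies in $A_k$. Now take any $z \in A_N \subset E_0$. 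A further induction gives $f^k(z) \in A_{N-k}$ for $0 \leq k \leq N$: the case $k = 0$ is the choice of $z$, and if $f^k(z) \in A_{N-k}$ with $k < N$ then $A_{N-k} = E_k \cap f^{-1}(A_{N-k-1})$ forces $f^{k+1}(z) \in A_{N-k-1}$. Since $A_{N-k} \subset E_k$ for each $k$ (and $A_0 = E_N$), this yields $f^k(z) \in E_k$ for $0 \leq k \leq N$, i.e.\ $z \in F_N$, as wanted.

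There is no serious obstacle here: the result is a soft compactness statement, and indeed it is quoted in the paper from \cite[Lemma~1]{slow}. The one point to be careful about is the orientation of the induction: the hypothesis $f(E_n) \supset E_{n+1}$ is a statement about forward images, so to manufacture a finite orbit segment $z, f(z), \dots, f^N(z)$ with $f^n(z) \in E_n$ one must build the auxiliary sets $A_k$ by taking preimages starting from the top level $N$ and working downwards, rather than the other way round. One should also record the implicit nonemptiness of the $E_n$, and observe that compactness is used only at the very end, to pass from nonemptiness of each $F_N$ to nonemptiness of $\bigcap_{N \geq 0} F_N$.
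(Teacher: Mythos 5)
Your proof is correct, and it is the standard ``pullback then intersect'' compactness argument that underlies \cite[Lemma~1]{slow} and \cite[Lemma~4.1]{martipete1}; the paper itself simply cites the result without reproving it, so there is no in-text proof to compare against. Both the backward construction of the sets $A_k$ and the final appeal to the finite intersection property are exactly as one would expect, and your remarks about the orientation of the induction and the implicit nonemptiness of the $E_n$ are apt.
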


Recall that we need to prove that if $\lambda > 0$ is sufficiently large, then $I(f_\lambda)$ is a {\cstarspw} for the transcendental self-map of $\Cstar$ given by
\[
f_\lambda(z) \defeq \lambda z\exp(e^{-z}/z).
\]
%(In fact graph drawing suggests that $\lambda = 20$ is sufficiently large, and so we have used this for our illustrative figures.)

First we note the following. Suppose that $\lambda > 0$, and $z = x + iy$. Then
\[
\frac{e^{-z}}{z} = \frac{e^{-x}}{x^2+y^2}((x \cos y - y \sin y) - i(x \sin y + y \cos y)) \defqe \xi + \zeta i.
\] 
We obtain that
\begin{equation}
\label{eq:freal}
\operatorname{Re} f_\lambda(z) = \lambda e^\xi (x \cos \zeta - y \sin \zeta),
\end{equation}
\begin{equation}
\label{eq:fimag}
\operatorname{Im} f_\lambda(z) = \lambda e^\xi (x \sin \zeta + y \cos \zeta),
\end{equation}
and % also
%\[
%|f_\lambda(z)| = \lambda |z| \exp\left(\frac{e^{-x}}{x^2+y^2}(x \cos y - y \sin y\right) = \lambda |z| \exp\left(\frac{e^{-x} \cos(y+\arctan(y/x))}{\sqrt{x^2+y^2}}\right).
%\]
\begin{equation}
\label{fsize}
|f_\lambda(z)| = \lambda |z| \exp\left(\frac{e^{-x}}{x^2+y^2}(x \cos y - y \sin y)\right). % = \lambda |z| \exp\left(\frac{e^{-x} \cos(y+\arctan(y/x))}{\sqrt{x^2+y^2}}\right).
\end{equation}
%(Note that considering $\arg f_\lambda(z)$ does not seem to help).
We frequently use the fact, without comment, that the function $f_\lambda$ is symmetric with respect to the real line, that is, $f_\lambda(\overline{z}) = \overline{f_\lambda(z)}$ for $z\in\C^*$. %, and so $f$ is symmetric in the real line.

We say that a Fatou component $U$ is a \textit{Baker domain} if $U\subseteq I(f)$ and $U$ is periodic, that is, $f^p(U)\subseteq U$ for some $p\in\N$. In the case that $p=1$, we say that $U$ is an \textit{invariant} Baker domain. In \cite[Example 3.3]{martipete3} it was shown that for $\lambda\geq 2$, the function $f_\lambda$ has a \textit{hyperbolic} invariant Baker domain containing a right half-plane. Roughly speaking, this means that $f_\lambda$ behaves like $z\mapsto \lambda z$ on a right half-plane; we refer to the survey \cite{rippon08} for a more detailed classification of Baker domains, and see also \cite{konig,berg2001,fagella-henriksen}. For the sake of completeness, we include a proof that $f_\lambda$ has a Baker domain.

\begin{lemma}
\label{lemm:growth}
Suppose that $\lambda \geq 2$. Then $\operatorname{Re} f_\lambda(z) \geq 0.7 \lambda \operatorname{Re} z$, for $\operatorname{Re} z \geq 2$. In particular $f_\lambda$ has a Baker domain containing a right half-plane.
\end{lemma}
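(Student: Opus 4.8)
The plan is to read everything off the explicit formula \eqref{eq:freal}, which says $\operatorname{Re} f_\lambda(z) = \lambda e^{\xi}(x\cos\zeta - y\sin\zeta)$ where $z = x + iy$ and $\xi + \zeta i = e^{-z}/z$. The governing observation is that once $x = \operatorname{Re} z \geq 2$ the quantity $e^{-z}/z$ is small in modulus, so $\xi$ and $\zeta$ are small, $e^{\xi}$ is close to $1$, $\cos\zeta$ is close to $1$, and the term $y\sin\zeta$ is a uniformly bounded perturbation that is easily dominated by the factor $x \geq 2$.

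Concretely, I would first record the crude bounds $|\xi|, |\zeta| \leq |e^{-z}/z| = e^{-x}/|z| \leq e^{-x}/x$, so that for $x \geq 2$ we get $|\xi|, |\zeta| \leq e^{-2}/2$, and also $|y\sin\zeta| \leq |y||\zeta| \leq (|y|/|z|)\,e^{-x} \leq e^{-2}$. Substituting these into \eqref{eq:freal} gives, for $\operatorname{Re} z \geq 2$,
\[
\operatorname{Re} f_\lambda(z) \;\geq\; \lambda\, e^{-e^{-2}/2}\bigl(x\cos(e^{-2}/2) - e^{-2}\bigr),
\]
the bracket being positive for $x \geq 2$; it then remains to check that $e^{-e^{-2}/2}\bigl(x\cos(e^{-2}/2) - e^{-2}\bigr) \geq 0.7\,x$ for all $x \geq 2$, which is a one-line numerical verification with ample room (the two sides already cross around $x \approx 0.54$). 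This is essentially the whole computation, and it is routine; the only mild nuisance is choosing clean constants. Note in passing that this inequality in fact holds for every $\lambda > 0$ — the hypothesis $\lambda \geq 2$ enters only in what follows.

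For the final assertion, set $H \defeq \{z \in \C : \operatorname{Re} z > 2\}$. Since $\lambda \geq 2$, the inequality just proved gives $\operatorname{Re} f_\lambda(z) \geq 0.7\lambda\operatorname{Re} z > 2.8 > 2$ for $z \in H$, hence $f_\lambda(H) \subset H$; iterating, $\operatorname{Re} f_\lambda^n(z) \geq (0.7\lambda)^n\operatorname{Re} z \to \infty$, so $|f_\lambda^n(z)| \to \infty$ and therefore $H \subset I(f_\lambda)$. Since $f_\lambda^n(H) \subset H$ for all $n$ and $H$ omits the values $0$ and $-1$, Montel's theorem shows that $\{f_\lambda^n\}$ is normal on $H$, so $H \subset F(f_\lambda)$. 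Let $U$ be the Fatou component containing $H$. Because $f_\lambda(U)$ is a connected subset of $F(f_\lambda)$ that meets $U$ (it contains $f_\lambda(H)$), we have $f_\lambda(U) \subset U$; and a standard normality argument — any locally uniform limit along a subsequence of $f_\lambda^n$ on $U$ equals $\infty$, since it is $\infty$ on the open set $H$ — shows $U \subset I(f_\lambda)$. Thus $U$ is an invariant Baker domain containing the right half-plane $H$, which is the final claim. The only point requiring a little care beyond the elementary estimate is this last passage from ``$H$ is mapped into itself and escapes'' to ``there is a Baker domain'', and even there the argument is entirely standard.
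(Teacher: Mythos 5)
Your proof is correct and rests on the same core observation as the paper --- that $e^{-z}/z$ is tiny once $\operatorname{Re} z \geq 2$ --- but exploits it by a genuinely different computation. The paper expands $f_\lambda(z) = \lambda z\bigl(1 + \tfrac{1}{ze^z} + \tfrac{1}{2!(ze^z)^2} + \cdots\bigr)$ and bounds the real part term by term, using $|ze^z| \geq 2e^2 \geq 10$ and $|e^z| \geq e^2 \geq 7$ to reach $\operatorname{Re} f_\lambda(z) \geq 0.8\lambda x - 0.2\lambda \geq 0.7\lambda x$ for $x \geq 2$. You instead work with the polar form $\exp(e^{-z}/z) = e^{\xi}(\cos\zeta + i\sin\zeta)$ already displayed in \eqref{eq:freal}, deduce $|\xi|, |\zeta| \leq e^{-x}/|z| \leq e^{-2}/2$ and $|y\sin\zeta| \leq e^{-2}$, and then check the resulting explicit inequality numerically; this sidesteps the series manipulation entirely, and with your constants there is much more slack than the paper's $0.8x-0.2$ bound (your estimate is roughly $0.93x - 0.13$). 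Both routes are elementary and deliver the same conclusion. For the ``in particular'' clause the paper simply writes ``the claim follows''; your expanded argument --- forward invariance of the half-plane because $0.7\lambda \geq 1.4 > 1$, normality via Montel since $H$ omits $0$ and $-1$, and the limit-function argument showing the Fatou component containing $H$ lies in $I(f_\lambda)$ --- is the standard one and correctly fills in what the paper leaves implicit.
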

\begin{proof}
We have that 
\[
f_\lambda(z) = \lambda z \left(1 + \frac{1}{ze^z} + \frac{1}{2!(ze^z)^2} + \frac{1}{3!(ze^z)^3} \ldots\right).
\]
Suppose that $z = x + iy$ is such that $x \geq 2$. Then $|z e^z| \geq x \cdot e^{x} \geq 2 e^2 \geq 10$. Also $|e^z| \geq e^2 \geq 7.$ We can assume that $y \geq 0$. Hence
\begin{align*}
\operatorname{Re} f_\lambda(z) 
&= \lambda x \operatorname{Re}\left(1 + \frac{1}{ze^z} + \frac{1}{2! (z e^{z})^2} + \frac{1}{3! (ze^{z})^3} \ldots\right) - \lambda y \operatorname{Im}\left(\frac{\frac{1}{e^z} + \frac{1}{2! z e^{2z}} + \frac{1}{3! z^2 e^{3z}} \ldots}{z}\right) \\
&\geq \lambda x \left(1 - \frac{1}{10} - \frac{1}{200} - \ldots\right) -  \lambda \left(\frac{1}{7} + \frac{1}{4 \cdot 49} + \ldots \right) \\
&\geq 0.8 \lambda x - 0.2 \lambda \\
&\geq 0.7\lambda x.
\end{align*}
The claim follows.
\end{proof}

From now on, for simplicity, we write $f$ instead of $f_\lambda$, and we will assume that $\lambda \geq 2$. Next we define a closed subset $I$ of $I(f)$ and show that all its complementary components are bounded in $\Cstar$. Note that Theorem~\ref{thm:escaping-set-sw} follows from this fact. To see this, let $z \in \Cstar$ be a repelling periodic point of $f$ and let $W$ be the component of $\Cstar \setminus I$ that contains $z$. Then $W$ is a domain that contains $z$ and is bounded in $\Cstar$, with $\partial W \subset I(f)$. Thus $I(f)$ is indeed a {\cstarspw}, by Theorem~\ref{theo:dynchar}.

To define our set $I$, first we set
\[
\domain \defeq \{ z \in \Cstar : \operatorname{Re} z \geq 2 \},
\]
which is an \emph{absorbing domain}. In other words, for each point $z$ in the Baker domain, there exists $n_0\in\N$ such that $f^n(z) \in H$ for $n \geq n_0$. Then, we define
\[
I \defeq \{ z \in \Cstar : \text{for all } n\in\N, \text{either } |f^n(z)| \geq n/2 \text{ or } |f^n(z)| \leq 2/n \text{ or } f^{n+2}(z) \in \domain \}.
\]
(Note that in the definition of $I$ we allow the possibility that there are values of $n$ for which two of the conditions hold simultaneously.) Observe that $I$ is forward invariant and contains $f^{-2}(H)$; we will use these properties in the below.

It follows from Lemma~\ref{lemm:growth} that $I$ is a closed subset of $I(f)$, and so it remains to show that all components of $\Cstar \setminus I$ are bounded in $\Cstar$; see Figure~\ref{fig:spw}.

The following observations are trivial, and the proofs are omitted.
\begin{observation}
\label{obs:eventually in I}
Suppose that $z \in \Cstar \setminus I$ and $k \in \N$ are such that either $|f^n(z)|~\geq~n/2$ or $|f^n(z)| \leq 2/n$, for $1 \leq n < k$. Then $f^{k}(z) \notin f^{-2}(\domain)$.
\end{observation}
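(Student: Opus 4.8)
The plan is to argue by contradiction: assuming $f^k(z) \in f^{-2}(\domain)$, I would show that $z$ then satisfies at least one of the three defining clauses of $I$ at \emph{every} index $n \in \N$, contradicting the hypothesis $z \in \Cstar \setminus I$.

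The one ingredient needed beyond unwinding definitions is that $\domain = \{z \in \Cstar : \operatorname{Re} z \geq 2\}$ is forward invariant under $f$. This follows from Lemma~\ref{lemm:growth} together with $\lambda \geq 2$: if $\operatorname{Re} z \geq 2$, then $\operatorname{Re} f(z) \geq 0.7\lambda \operatorname{Re} z \geq 2.8 > 2$, so $f(\domain) \subset \domain$. This is the same fact already used implicitly when noting that $I$ contains $f^{-2}(\domain)$.

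Now suppose for a contradiction that $f^k(z) \in f^{-2}(\domain)$, that is, $f^{k+2}(z) \in \domain$. Iterating the forward invariance of $\domain$ gives $f^{n+2}(z) = f^{n-k}\bigl(f^{k+2}(z)\bigr) \in \domain$ for every integer $n \geq k$, so the third alternative in the definition of $I$ holds at all such $n$. For the remaining indices $1 \leq n < k$, the hypothesis supplies one of the first two alternatives, namely $|f^n(z)| \geq n/2$ or $|f^n(z)| \leq 2/n$. Hence for every $n \in \N$ at least one of the three defining conditions of $I$ is satisfied, so $z \in I$, contradicting $z \in \Cstar \setminus I$. I expect no genuine obstacle here: the argument is just a two-case split over $n < k$ and $n \geq k$, and the only point to keep track of is the index shift by $2$ coming from $f^{-2}(\domain)$.
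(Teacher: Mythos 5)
Your proof is correct and is exactly the argument the paper leaves implicit when it labels this observation ``trivial'' and omits the proof. The key fact — forward invariance of $\domain$, which follows from Lemma~\ref{lemm:growth} and $\lambda \geq 2$ — is the same one the paper uses to justify that $I \supset f^{-2}(\domain)$, and your two-case split over $n < k$ and $n \geq k$ (with the index shift by $2$) is the natural way to complete the contradiction.
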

\begin{observation}
\label{obs:real line}
We have that $f(\R^+) \subset \domain$, and so $f^{-1}(\R^+) \subset f^{-2}(\domain)$.
\end{observation}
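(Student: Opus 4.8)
The plan is to reduce the first inclusion to the elementary inequality $e^t \ge 1+t$. First I would note that for real $x > 0$ the value $f(x) = \lambda x \exp(e^{-x}/x)$ is a positive real number, since $\lambda$, $x$ and $e^{-x}/x$ are all positive; hence $\operatorname{Re} f(x) = f(x)$ and it suffices to prove $f(x) \ge 2$ for every $x > 0$. Applying $e^t \ge 1+t$ with $t = e^{-x}/x > 0$ gives, for all $x > 0$,
\[
f(x) = \lambda x \exp\!\left(\frac{e^{-x}}{x}\right) \ge \lambda x\left(1 + \frac{e^{-x}}{x}\right) = \lambda\bigl(x + e^{-x}\bigr).
\]
Note that this single estimate is uniform in $x > 0$, so (in contrast with Lemma~\ref{lemm:growth}, which is restricted to $\operatorname{Re} z \ge 2$) no separate treatment of small $x$ is needed.

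Next I would observe that $x + e^{-x} \ge 1$ for all $x \ge 0$: setting $g(x) = x + e^{-x}$ one has $g'(x) = 1 - e^{-x} \ge 0$ on $[0,\infty)$ and $g(0) = 1$, so $g$ is non-decreasing there with minimum value $1$. Combined with the previous display this yields $f(x) \ge \lambda \ge 2$, hence $f(x) \in \domain$, which proves $f(\R^+) \subset \domain$. For the second inclusion I would simply chase definitions: if $w \in f^{-1}(\R^+)$, then $f(w) \in \R^+$, so $f^2(w) = f(f(w)) \in f(\R^+) \subset \domain$, i.e.\ $w \in f^{-2}(\domain)$.

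There is no genuine obstacle here — the observation is elementary, which is why the authors omit the proof — but two small points deserve a glance: one must first confirm that $f$ maps the positive real axis into itself, so that ``$\operatorname{Re}$'' may be dropped, and one should notice that the lone inequality $e^t \ge 1+t$ already handles both the regime $x \to 0^+$, where $f(x) \to +\infty$, and the regime $x \to +\infty$, where $f(x) \sim \lambda x$, without any case split.
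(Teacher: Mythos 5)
Your proof is correct, and since the paper explicitly omits the argument (declaring the observation trivial), there is nothing to compare it against. The chain $f(x) = \lambda x \exp(e^{-x}/x) \ge \lambda x(1 + e^{-x}/x) = \lambda(x + e^{-x}) \ge \lambda \ge 2$ is sound, the monotonicity of $x + e^{-x}$ on $[0,\infty)$ with value $1$ at $0$ is checked correctly, and the definition-chase for $f^{-1}(\R^+) \subset f^{-2}(\domain)$ is exactly right. Your remark that the single inequality $e^t \ge 1+t$ handles both the regime $x \to 0^+$ (where $e^{-x}/x$ blows up) and $x \to +\infty$ (where $\lambda x$ dominates) without a case split is a nice touch, and it correctly identifies why this is simpler than Lemma~\ref{lemm:growth}, which only needs to cover $\operatorname{Re} z \ge 2$ and does so via a series estimate that would not apply near the origin.
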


\begin{figure}[ht!]
\includegraphics[width=.49\linewidth]{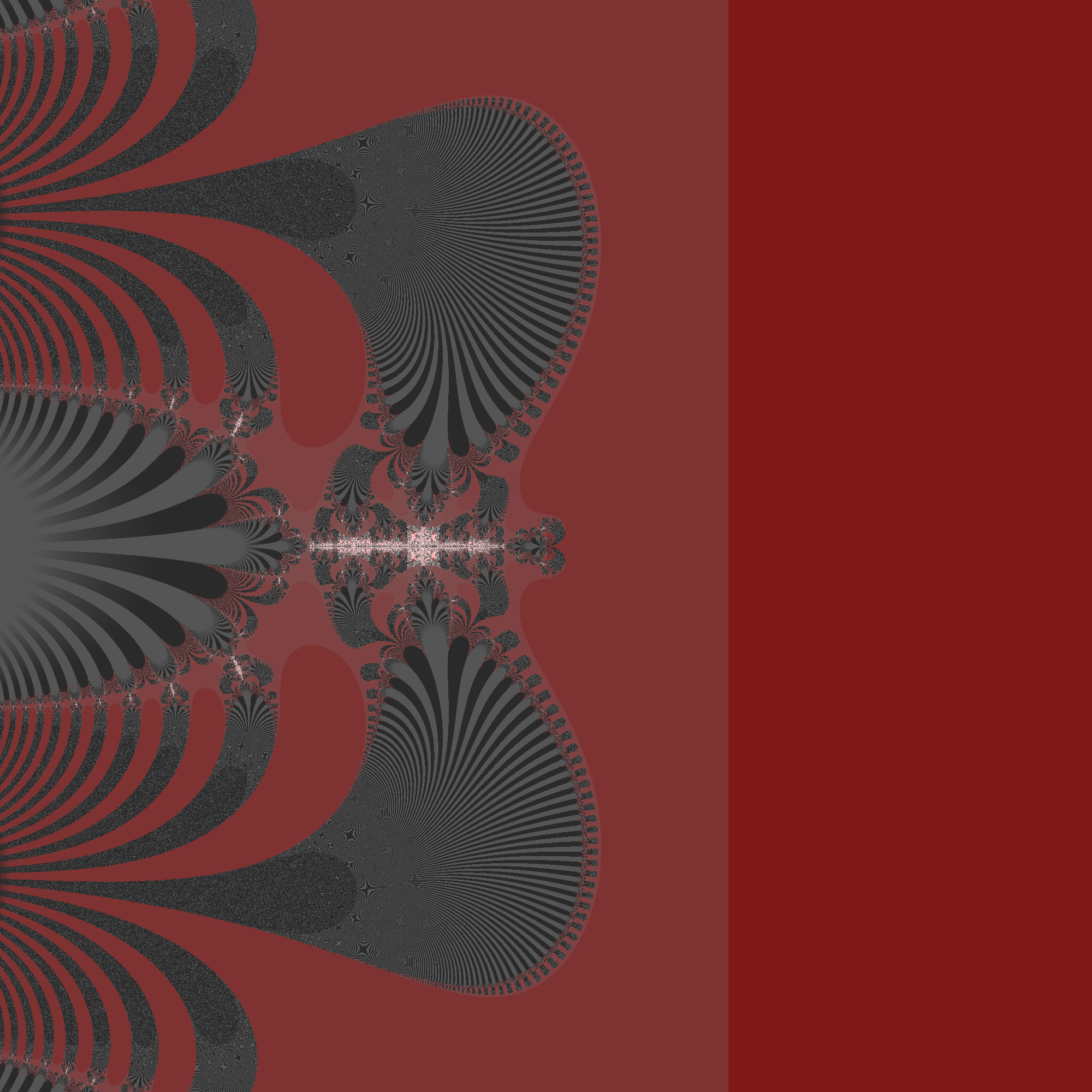}
\hspace{\fill}
\includegraphics[width=.49\linewidth]{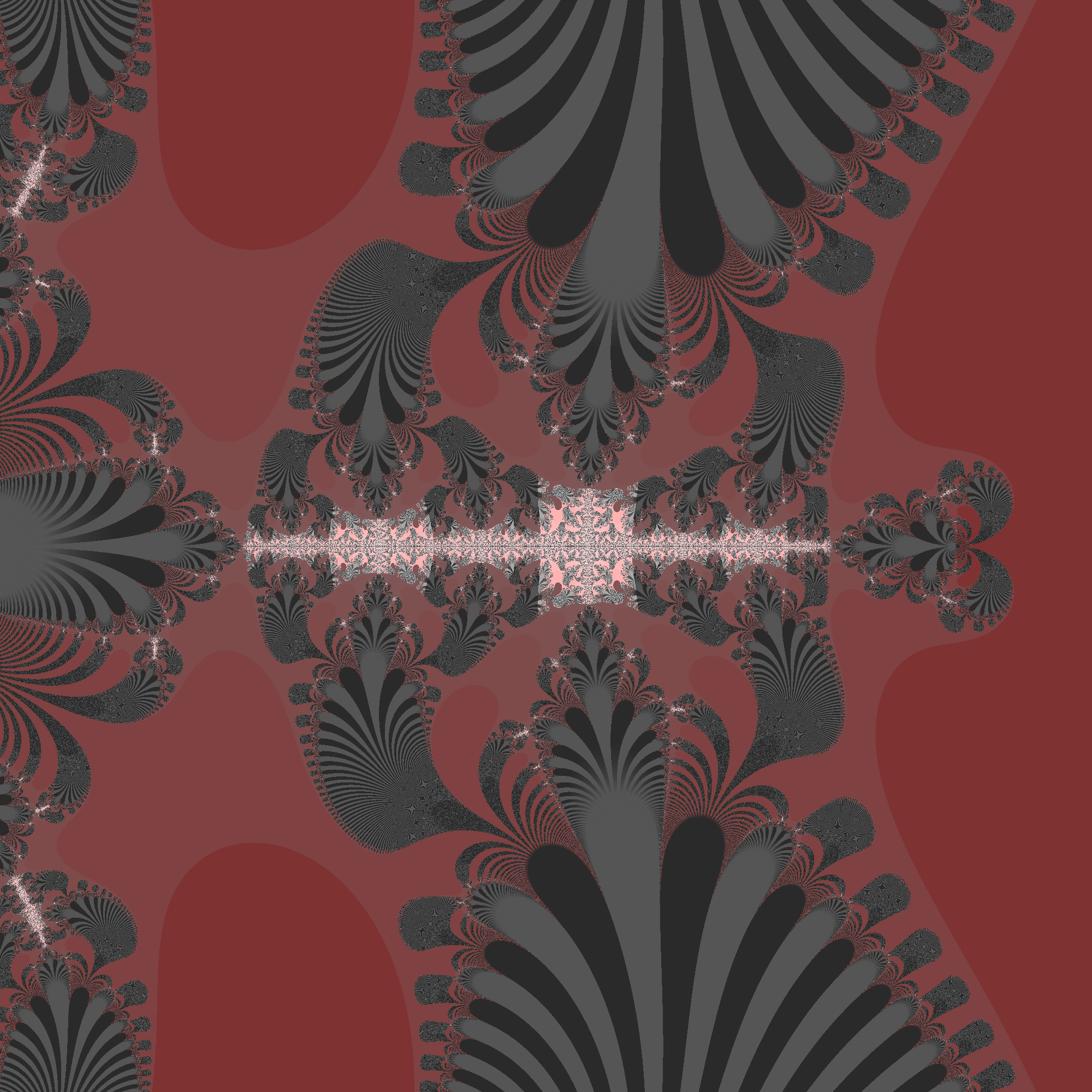}
\caption{The complement of the closed set $I\subset I(f_\lambda)$ for $\lambda=32$ is depicted in light colours. On the left, $z\in [-6,6]+i[-6,6]$. On the right, a zoom of the largest component containing the two repelling fixed points on the negative real axis; here $z\in [-3.5,0.5]+i[-2,2]$. Note that $I$ is bounded away from zero, though this is hard to show in a figure.}
\label{fig:spw}
\end{figure}

The proof that all components of $\Cstar \setminus I$ are bounded in~$\Cstar$ is long and complicated, so we begin with a very rough sketch of the strategy. We begin by supposing, by way of contradiction, that there is a component, $X$, of~$\Cstar \setminus I$ that is unbounded in~$\Cstar$. Since $X$ is open, this implies that there is a long curve, $\Gamma_1$ say, contained in $X$. Note that here, by \textit{long} we mean that $\Gamma_1$ contains points the ratio of whose moduli is large; see Lemma~\ref{lemm:longcurvesnearzero} for a more precise statement.
%is near infinity and is long in the usual sense, or $\Gamma_1$ is near the origin and its image under the map $z \mapsto 1/z$ is long in the usual sense.

By studying the set $f^{-2}(\domain)$, and by Observation~\ref{obs:eventually in I}, we show that $\Gamma_1$ must contain a subcurve, $\Gamma_1'$, that lies in one of a collection of channels in which either $|f|$ is very large or $|f|$ is very small; these sets are illustrated in Figure~\ref{fig:channels}. We are then able to deduce that $\Gamma_2 = f(\Gamma_1')$ is a long curve, in the sense mentioned above, which also does not meet $f^{-2}(\domain)$. Hence we can apply the process above to $\Gamma_2$ to obtain another curve $\Gamma_3$. We then iterate this process, and hence, using Lemma~\ref{lem:slow}, prove the existence of a point $z \in \Gamma_1 \cap I$, which is a contradiction. This completes the sketch of our proof.

It is clear from the definition of $I$, together with Observation~\ref{obs:eventually in I}, that the preimage $f^{-2}(\domain)$ plays an important role. Hence, in view of Observation~\ref{obs:real line}, we begin by considering the preimage of the positive real axis (see Figure~\ref{fig:curves-origin}).% These are the red lines in Figure~\ref{f4}.

%\begin{figure}
%\includegraphics[width=14cm,height=8cm]{Graphs/redcurves.png}
%\caption{Red lines are the preimages of the positive real axis $f^{-1}(\R^+)$. Note the curve $\Gamma$ from Lemma~\ref{lem:preimages of the real line} which crosses the real line. The unit disc is also shown, in black.}\label{f4}
%\end{figure}

\begin{lemma}
\label{lem:preimages of the real line}
Set $V = f^{-1}(\R^+)$. Then $V$ consists of the following curves, which we call the \emph{curves in $V$}:
\begin{itemize}
\item The positive real axis $\R^+$.
%\item An analytic curve $\Gamma$ from $0$ to $0$ that lies inside the unit disc and is symmetric in the real axis, which it crosses once near the point $0.8$.
\item A collection of curves from $0$ to $0$ that lie in the unit disc. %, each of which lies either in the upper half-plane or in the lower half-plane.
\item A collection of curves from $\infty$ to $\infty$ that lie outside the unit disc. Moreover, $\textup{Re}\,z\to -\infty$ and $|\textup{Im}\,z|$ is bounded as $z\to\infty$ on these curves.
\end{itemize}
Exactly one of the curves in $V$, apart from $\R^+$ itself, meets the positive real axis and none meet the negative real axis. Finally, there is a number $t >0$ such that if $z_1$ and $z_2$ are two points of the same curve in $V$, then $|\operatorname{Im} z_1 - \operatorname{Im} z_2| < t.$
\end{lemma}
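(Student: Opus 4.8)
The plan is to realise $V$ as the level set of a single harmonic function and then read off its structure. First, for $x<0$ we have $e^{-x}/x<0$ and $\lambda x<0$, hence $f(x)<0$; so $f(\R^-)\subset\R^-$ and $V\cap\R^-=\emptyset$. Thus $V$ is contained in the slit plane $\Omega_0\defeq\C\setminus(-\infty,0]$, on which $\operatorname{Log}$ is single-valued and $\ell(z)\defeq\log\lambda+\operatorname{Log}z+e^{-z}/z$ is a holomorphic branch of $\log f$. Setting $\Psi\defeq\operatorname{Im}\ell=\operatorname{Arg}z+\operatorname{Im}(e^{-z}/z)$, which is harmonic on $\Omega_0$, we have $f(z)\in\R^+$ exactly when $\operatorname{Im}\ell(z)\in2\pi\Z$, so $V=\Psi^{-1}(2\pi\Z)$. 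I would use two consequences repeatedly: (i) since $\operatorname{Arg}z\in(-\pi,\pi)$ on $\Omega_0$ and $\operatorname{Arg}z+\operatorname{Im}(e^{-z}/z)$ is a fixed element of $2\pi\Z$ on each connected subset of $V$, the function $\operatorname{Im}(e^{-z}/z)$ is confined there to an interval of length $2\pi$; (ii) being level curves of a harmonic function, the components of $V$ are locally analytic arcs that can meet one another only at zeros of $\ell'=f'/f$, i.e.\ (as $f$ is zero-free) at critical points of $f$.

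Next I would pin down the unique branch point. Solving $f'=0$ gives $ze^z=z+1$; on $(0,\infty)$ the function $z\mapsto ze^z-z-1$ increases strictly from $-1$, so has a unique zero $z_0\in(0,1)$, a simple critical point of $f$ with $f(z_0)>0$, whence $z_0\in V$. The remaining solutions of $ze^z=z+1$ are one point of $(-\infty,-1)$, which lies on $\R^-$ and so outside $\Omega_0$, and for each $m\in\Z\setminus\{0\}$ one point close to $2\pi i m$; at any such $c$ one has $e^{-c}/c=1/(c+1)$, so $\Psi(c)=\operatorname{Arg}c+\operatorname{Im}\bigl(1/(c+1)\bigr)$ is close to $\pm\pi/2$, hence not in $2\pi\Z$ (the finitely many small $|m|$ checked directly). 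So $z_0$ is the only branch point of $V$, of valence four; since $\R^+\subset\Psi^{-1}(0)$ runs through $z_0$, exactly one further analytic arc passes through $z_0$, and since $V\setminus\{z_0\}$ is a $1$-manifold, no component of $V$ other than that arc and $\R^+$ itself can meet $\R^+$, which together with $V\cap\R^-=\emptyset$ yields the two assertions about intersections with the real axis. Also $V$ has no Jordan-curve component: such a curve bounds a Jordan domain $D$ with $\overline D\subset\Omega_0$ (it cannot surround $0$, as $\Omega_0$ is simply connected), and $\Psi$, harmonic near $\overline D$ and constant on $\partial D$, is then constant on $\overline D$, forcing $f$ constant. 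Since $V\setminus\{z_0\}$ has no boundary, every component of $V$ is $\R^+\cup(\text{the extra arc})$ or a properly embedded line, and in either case its ends tend to $\{0,\infty\}$.

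It remains to classify the arcs and to prove the strip bound. Near $0$, using $e^{-z}/z=1/z-1+O(z)$ and $w=1/z$, the equation $\Psi=2\pi k$ is a small perturbation of the image of the line $\{\operatorname{Im}w=2\pi k\}$: for $k\neq0$ this is an arc with both ends at $0$, lying well inside $\D$, while $\R^+\subset\Psi^{-1}(0)$ joins $0$ to $\infty$. Near $\infty$ with $\operatorname{Re}z\geq2$, Lemma~\ref{lemm:growth} gives $\operatorname{Re}f(z)>0$, so $f(z)\in\R^+$ reduces to $\operatorname{Im}f(z)=0$; a direct estimate shows this forces $\operatorname{Im}z=0$, so $V\cap\{\operatorname{Re}z\geq2\}=[2,\infty)$. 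Near $\infty$ with $\operatorname{Re}z$ very negative, $|e^{-z}/z|$ is enormous while, by (i), $\operatorname{Im}(e^{-z}/z)$ stays bounded, so $e^{-z}/z$ is large and nearly real, forcing $\operatorname{Im}z$ near a multiple of $\pi$ and $\operatorname{Re}z\to-\infty$. Finally, on $\{|z|=1\}$ one has $\Psi(e^{i\theta})=\theta-e^{-\cos\theta}\sin(\theta+\sin\theta)$, and using $|\Psi|<2\pi$ there together with oddness one checks $\Psi(e^{i\theta})\in2\pi\Z$ only at $\theta=0$; thus $V\cap\{|z|=1\}=\{1\}$, and as $1\neq z_0$ is not a branch point, $z=1$ lies on $\R^+$ only. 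Hence no $0$-to-$0$ component and no $\infty$-to-$\infty$ component crosses $|z|=1$, so the former lie in $\D$, the latter outside $\D$, and $\R^+$ is the unique component joining $0$ to $\infty$. For the final assertion, $t$ is trivial on $\R^+$ and follows from $\D$-containment on a $0$-to-$0$ component; on an $\infty$-to-$\infty$ component one combines its avoidance of $\{\operatorname{Re}z\geq2\}$, the fact that in any strip $\{-R\leq\operatorname{Re}z\leq2\}$ one has $\operatorname{Im}(e^{-z}/z)\to0$ and $\operatorname{Arg}z\to\pm\pi/2$ as $|\operatorname{Im}z|\to\infty$ (so $V$ meets such a strip inside $\{|\operatorname{Im}z|<Y_0\}$ for a fixed $Y_0$), and, where $\operatorname{Re}z<-R$, the pinning of (i) keeping $\operatorname{Im}z$ within a bounded distance of the multiple of $\pi$ it limits to.

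The main obstacle I expect is this last, quantitative step: making the left-half-plane asymptotics precise enough to bound the oscillation of $\operatorname{Im}z$ along a single curve \emph{uniformly} over all curves — in particular ruling out that an $\infty$-to-$\infty$ curve running far out at height $\approx m\pi$ drifts to a very different height before returning to $\infty$. By comparison the reduction to $\Psi$, the location of $z_0$, the exclusion of loops, and the identity $V\cap\{|z|=1\}=\{1\}$ should be routine.
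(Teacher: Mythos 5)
Your reduction to the harmonic function $\Psi(z)=\operatorname{Arg}z+\operatorname{Im}(e^{-z}/z)$ on the slit plane is a clean reframing of the paper's argument, which works directly with the real trigonometric equation obtained from $\operatorname{Im}f(z)=0$ (equation~\eqref{eq:inV}); these are equivalent, and your level-set viewpoint makes the exclusion of Jordan-curve components (via the maximum principle) and the role of critical points particularly transparent. The identification of $V\cap\{|z|=1\}=\{1\}$ matches the paper's computation on the unit circle, and the asymptotics near $0$ and in $\{\operatorname{Re}z\geq 2\}$ are sound. One place you do more than the paper, and more than the lemma needs: you attempt to classify \emph{all} critical points of $f$ (one on $(-\infty,-1)$, one near each $2\pi im$) in order to conclude that $z_0$ is the only branch point of $V$. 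The lemma only requires that $z_0$ is the only branch point \emph{on $\R^+$}, which follows from the strict monotonicity of $s\mapsto se^s-s-1$ on $(0,\infty)$; your global classification is asserted rather than proved and is not needed.

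The genuine gap is exactly the one you flag at the end, and it is worth saying precisely why your proposed route is insufficient and how the paper closes it. Your observation (i) confines $\operatorname{Im}(e^{-z}/z)$ to an interval of length $2\pi$ along a component of $V$, but this does \emph{not} pin $\operatorname{Im}z$: for fixed $\operatorname{Re}z$, the quantity $\operatorname{Im}(e^{-z}/z)=-e^{-x}\sin\bigl(y+\arctan(y/x)\bigr)/\sqrt{x^2+y^2}$ already takes all values in a bounded interval infinitely often as $y$ varies, so bounding it does not rule out drift between different heights $\approx m\pi$. What does rule this out is the stronger consequence you have available but do not use: for a component of $V$ with parameter $k\neq 0$ one has $|\operatorname{Im}(e^{-z}/z)|\geq\pi$, so the curve \emph{never} meets the zero set $\{x\sin y+y\cos y=0\}$ (and a component with $k=0$ meets it only on $\R^+$). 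That zero set consists, away from a neighbourhood of $0$, of curves asymptotic to the horizontals $y=n\pi$ within an $O(1)$ band, so each component of $V$ is trapped between two consecutive such barriers and its imaginary part oscillates by at most a uniform constant. This is precisely the paper's argument for the final claim, and it is the missing step you need to turn your sketch into a proof.
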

\begin{remark}\normalfont
\label{rem:r1}
By considering the critical points of $f$, we can in fact show something slightly stronger. Within the unit disc, $V$ contains a curve $\Gamma$ from $0$ to $0$ that is symmetric in the real axis, which it crosses once near the point $0.8$; see Figure~\ref{fig:curves-origin}. All the other curves in $V$ are disjoint, and lie in a left half-plane. However, the proof of these facts is complicated and is omitted, as this extra detail is not required for the proof of our result.
\end{remark}

%\begin{figure}[ht!]
%\includegraphics[width=.65\linewidth, height=.6\linewidth]{Graphs/unit-circle.png}
%\caption{The curves in $V$ that lie inside the unit disc.}
%\label{fig:curves-origin}
%\end{figure}

\begin{figure}[ht!]
\includegraphics[width=.49\linewidth, height=.45\linewidth]{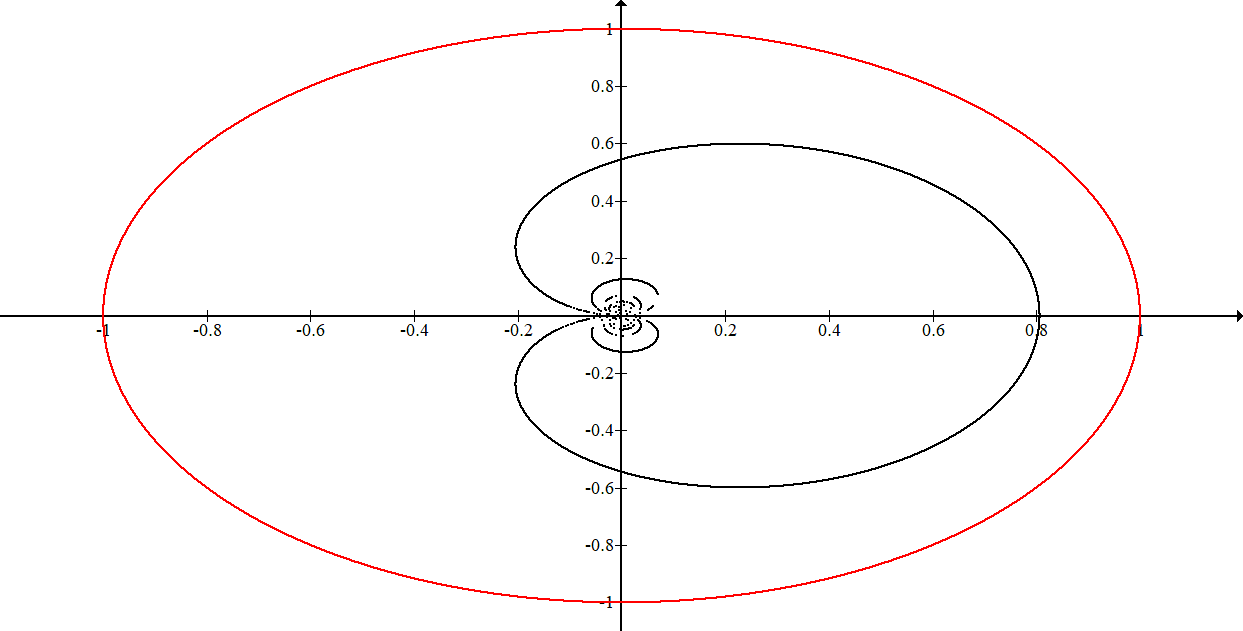}
\hspace{\fill}
\includegraphics[width=.45\linewidth, height=.45\linewidth]{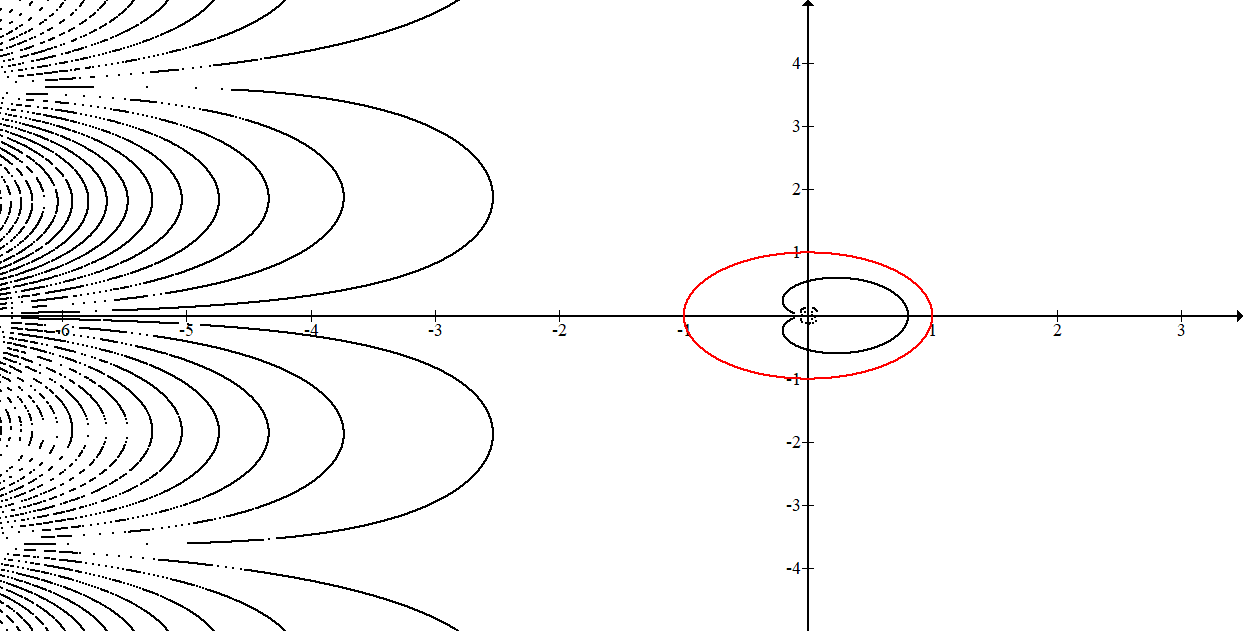}
\caption{Two images of the curves in $V$, showing the unit circle in red.}
\label{fig:curves-origin}
\end{figure}

\begin{proof}[Proof of Lemma~\ref{lem:preimages of the real line}]
Suppose that $z = x + iy \in V$, and we can assume that $y \geq 0$. It follows from \eqref{eq:fimag} that
\begin{equation}
\label{eq:inV}
y \cos \left(\frac{e^{-x}}{x^2+y^2}\left(x \sin y + y \cos y\right)\right) = x \sin \left(\frac{e^{-x}}{x^2+y^2}\left(x \sin y + y \cos y\right)\right).
\end{equation}
(Note that the solutions to this equation give preimages of both the positive real line and the negative real line.)

Observe that $V$ is necessarily a union of analytic curves which run from $\{0, \infty\}$ to $\{0, \infty\}$. One of these curves is the positive real axis. We claim that this is the only curve in $V$ that meets the unit circle $$\{ z \in \Cstar\, :\, |z| = 1 \}.$$ To prove this, suppose that $x = \cos \theta$ and $y = \sin \theta$ are such that $x + iy$ is in $V$. From \eqref{eq:inV} we obtain that
\[
\tan \theta = \tan \left(e^{-\cos \theta}\left(\cos \theta \sin(\sin \theta) + \sin \theta \cos(\sin \theta)\right)\right),
\]
and hence, by the compound angle formula, that
\[
\theta + n \pi = e^{-\cos \theta}\sin (\sin \theta + \theta), \qfor n \in \Z.
\]
It can then be checked (numerically or by graph drawing) that the only solutions to this equation are when $\theta$ is a multiple of $\pi$, as required. (Note that the point corresponding to $\theta = \pi$ lies on the negative real axis, which is a preimage of the negative real axis.) 

It follows that the only curve in $V$ which crosses the unit circle is the positive real axis. Hence all other curve in $V$ run either from $0$ to $0$, or from $\infty$ to $\infty$. 

To prove the penultimate claim of the lemma, note first that the negative real axis maps under $f$ to the negative real axis. Next we note that the critical points of $f$ occur when
\[
e^z = 1 + \frac{1}{z}.
\]
It can then be checked that there is only one critical point on the positive real axis, and this is a simple critical point; this is the point close to $0.8$ mentioned in Remark~\ref{rem:r1} and visible in Figure~\ref{fig:curves-origin}. The penultimate claim of the lemma follows.

It remains to consider the behaviour of $V$ near infinity. Observe from \eqref{eq:inV} that, apart from the positive real line, no curve in $V$ can meet the set of points $z = x + iy$ where
\begin{equation}
\label{eq:cantcross}
x \sin y + y \cos y = 0.
\end{equation}
%which is equivalent to
%\[
%y/x = -\tan y.
%\]

\pagebreak

In polar coordinates this gives
\[
\theta + n \pi = -r \sin \theta, \qfor n \in \Z.
\]
For large values of $n$, this is approximately the horizontal line
\[
y = r \sin \theta = n \pi, \qfor n \in \Z,
\]
and so no curve in $V$ can tend to infinity by increasing imaginary part. In particular, this gives the final claim of the lemma. It also implies that we can consider separately the solutions to \eqref{eq:inV} in the two cases that $x$ is large and positive, and $x$ is large and negative.

First, suppose that $x$ is large and positive. Then the terms $$\frac{e^{-x} x \sin y}{x^2+y^2} \quad\text{and}\quad \frac{e^{-x} y \cos y}{x^2+y^2}$$ are both small (independently of the size of $y$). Thus if \eqref{eq:inV} holds, then $y$ is small compared to $x$. It follows that, for $x$ large and positive, the solutions to \eqref{eq:inV} are approximately when
\[
y = e^{-x}\sin y,
\]
from which we can deduce that $y=0$ is the only solution to \eqref{eq:inV} when $x$ is large and positive. It follows that the curves from $\infty$ to $\infty$ in $V$ must tend to infinity via large, negative real parts, as claimed. This completes the proof.
\end{proof}

We will use our next result to prove that certain curves that do not meet $f^{-2}(\domain)$ must contain points of large (negative) real part.
\begin{lemma}
\label{lem:imag parts}
Suppose that $\lambda \geq 32$. Then there exists $t' > 0$ with the following property. Suppose that $\Gamma$ is a connected set that does not meet $f^{-2}(\domain)$, and that $z_1, z_2 \in \Gamma$. Then $|\operatorname{Im} z_1 - \operatorname{Im} z_2| < t'$.
\end{lemma}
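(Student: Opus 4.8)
\emph{Plan.} The idea is to produce, inside $f^{-2}(\domain)\cap\{\operatorname{Re} z<2\}$, a doubly‑infinite collection of disjoint horizontal ``barriers'' of uniformly bounded height, each running out to $\operatorname{Re} z=-\infty$, so that a connected set $\Gamma$ disjoint from $f^{-2}(\domain)$ is trapped between two consecutive barriers; then $t'$ will be essentially the largest separation of consecutive barriers together with the height of a bounded ``core'' near $|z|=1$. For the reductions: we may assume $\Gamma$ is a component of the open set $\Cstar\setminus f^{-2}(\domain)$. By Observation~\ref{obs:real line}, $V=f^{-1}(\R^+)\subseteq f^{-2}(\domain)$, so $\Gamma$ meets no curve of $V$; and since $\lambda\ge 2$, Lemma~\ref{lemm:growth} gives $\domain\subseteq f^{-1}(\domain)\subseteq f^{-2}(\domain)$, so $\Gamma\subseteq\{\operatorname{Re} z<2\}$. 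As the part of $\Gamma$ in any disc $\{|z|\le R_0\}$ has imaginary oscillation at most $2R_0$, it suffices to confine $\Gamma$ in $\{\operatorname{Re} z<2,\ |z|\ge R_0\}$.

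The hypothesis $\lambda\ge 32$ enters where $\operatorname{Re} z$ is not too negative. Writing $z=x+iy$ and $\xi+i\zeta=e^{-z}/z=\tfrac{e^{-x}}{|z|}\exp(-i(y+\arg z))$, so that $|\xi|,|\zeta|\le e^{-x}/|z|$, formula~\eqref{eq:freal} reads $\operatorname{Re} f(z)=\lambda e^{\xi}|z|\cos(\arg z+\zeta)$. For $-C\le x\le 2$ and $|y|\ge Y_0$, with $Y_0$ large depending on $C$ and $\lambda$, the quantities $\xi,\zeta$ are so small that a short computation gives $\operatorname{Re} f(z)\ge\lambda\big(x+\operatorname{sgn}(y)e^{-x}\sin(y+\arg z)\big)-1$, which exceeds $2$ unless $x+\operatorname{sgn}(y)e^{-x}\sin(y+\arg z)$ is very small; since this is at least $x-e^{-x}$, the exceptional set lies in a disjoint union of horizontal ``gap bands'' $\mathcal B_k$ ($|k|$ large), each of height bounded by a universal constant and clustered about the odd multiples of $\pi$. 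Hence $f^{-2}(\domain)\supseteq f^{-1}(\domain)\supseteq\{-C\le x\le 2,\ |y|\ge Y_0\}\setminus\bigcup_k\mathcal B_k$.

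Where $\operatorname{Re} z$ is very negative I would use Lemma~\ref{lem:preimages of the real line}, together with the fact that $f(z)\in\R^+$ forces $\arg z+\zeta\in 2\pi\Z$, a quantity that is locally constant and hence constant along each curve of $V$. On a curve of $V$ running to $\infty$ in the left half-plane, $\arg z\to\pi$, so $\zeta$ tends to a fixed odd multiple of $\pi$; since $|\zeta|=\tfrac{e^{-x}}{|z|}|\sin(y+\arg z)|$ with $e^{-x}/|z|\to\infty$, this forces $\sin(y+\arg z)\to 0$ and hence $\operatorname{Im} z$ to converge to an integer multiple of $\pi$. Combined with the last clause of Lemma~\ref{lem:preimages of the real line} (on each curve of $V$ the imaginary part oscillates by less than $t$), the curves of $V$ lie, for $|z|$ large, in narrow horizontal bands about the lines $\operatorname{Im} z\in\pi\Z$, one about each sufficiently large such line; being contained in $f^{-2}(\domain)$, they split the region where $\operatorname{Re} z$ is very negative into channels of bounded height.

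I expect the gluing to be the main obstacle: one must verify that the gap bands $\mathcal B_k$ of the $\lambda$-estimate and the channels cut out by $V$ overlap and match up so as to partition $\{\operatorname{Re} z<2\}$ (outside a bounded core) into pieces of uniformly bounded imaginary diameter --- in particular, one must rule out a connected set escaping from one band to the next by going around the inner part of a curve of $V$, through the annular region near $|z|=1$, or through a wedge where $\operatorname{Re} z$ is very negative while $|\operatorname{Im} z|$ is only moderate. This requires locating the curves of $V$ precisely enough (using Remark~\ref{rem:r1} and an argument-principle count along the circles $\{|z|=\rho\}$) and choosing the constants $C$, $Y_0$, $R_0$, and finally $t'$, in the right order.
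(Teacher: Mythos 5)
Your overall plan — to find, inside $f^{-2}(\domain)$, a doubly‑infinite sequence of connected horizontal barriers running from far left through $\domain$, so that a connected $\Gamma$ disjoint from $f^{-2}(\domain)$ is trapped between two consecutive ones — is exactly the idea of the paper's proof, and your initial reductions (using Observation~\ref{obs:real line} and Lemma~\ref{lemm:growth} to see that $\Gamma$ avoids $\domain$, $\R^+$ and every curve of $V$) are correct. The difficulty is that you have identified but not resolved the gluing, and this is not a routine verification: it is the whole content of the lemma.

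The paper avoids the gluing problem by a far more economical choice of barrier. Instead of analysing the full strip $\{-C\le x\le 2,\ |y|\ge Y_0\}$, one looks at the \emph{single} horizontal half‑line at height $y=2n\pi$ for $x\ge x_n\defeq -(\log (2n\pi)+\log 2)$. The estimate that uses $\lambda\ge 32$, split into the three ranges $x\ge 1$, $0\le x<1$, and $-(\log y+\log 2)\le x<0$, shows this half‑line lies in $f^{-1}(\domain)\subset f^{-2}(\domain)$. Then — and this is the step your proposal is missing — a direct computation with \eqref{eq:inV} at $y=2n\pi$ shows that this same half‑line meets a curve of $V=f^{-1}(\R^+)\subset f^{-2}(\domain)$ at a point with real part close to $-(\log(2n\pi)+\log(\pi/2))$, which is $>x_n$ since $\pi/2<2$, i.e.\ the intersection occurs inside the half‑line's good range. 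As that curve of $V$ has bounded imaginary oscillation (last clause of Lemma~\ref{lem:preimages of the real line}) and tends to infinity with $\operatorname{Re} z\to-\infty$, its union with the half‑line is a single connected subset of $f^{-2}(\domain)$ running from $\operatorname{Re} z=-\infty$ into $\domain$, at height $\approx 2n\pi$. This reduces the gluing to a single, checkable intersection; without such a mechanism, your "match up the gap bands $\mathcal{B}_k$ with the $V$‑channels" step — and in particular ruling out escape through a wedge where $\operatorname{Re} z$ is very negative and $|\operatorname{Im} z|$ only moderate — is genuinely open.

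A secondary inaccuracy worth flagging: you assert that far to the left each curve of $V$ lies in a narrow band about a single line $\operatorname{Im} z=m\pi$. In fact (as comes out of the approximating curves $A_n'$ in the proof of Lemma~\ref{lemm:longcurvesnearinfinity}) a curve of $V$ in the far left is asymptotic to $y=m\pi$ along one end and to $y=(m+1)\pi$ along the other, so its imaginary oscillation is at least $\pi$ and the constant $t$ in Lemma~\ref{lem:preimages of the real line} exceeds $\pi$. Your end‑of‑curve calculation ($\zeta$ tends to an odd multiple of $\pi$, so $\sin(y+\arg z)\to 0$) is correct but only controls one end at a time. Since you only need bounded oscillation, this does not break the strategy, but it is a wrong picture and would likely mislead an attempt to make the matching precise.
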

%
%Now we describe the preimage $f^{-2}(\domain)$ in a little more detail.
%\begin{lemma}
%\label{lem:subsets of I}
\begin{proof}
To prove this result we, in fact, prove the following. We claim that if $\lambda \geq 32$, then for all $n \in \Z$ with $|n|$ sufficiently large, there exists $x_n < 0$ such that the horizontal half-line $$\{z= x \pm 2n\pi i : x  \geq x_n \}$$ lies in $f^{-2}(\domain)$ and meets a component of $f^{-1}(\R^+)$; see Figure~\ref{f1}, which illustrates the case $n=1$. 
\begin{figure}
\includegraphics[width=\linewidth]{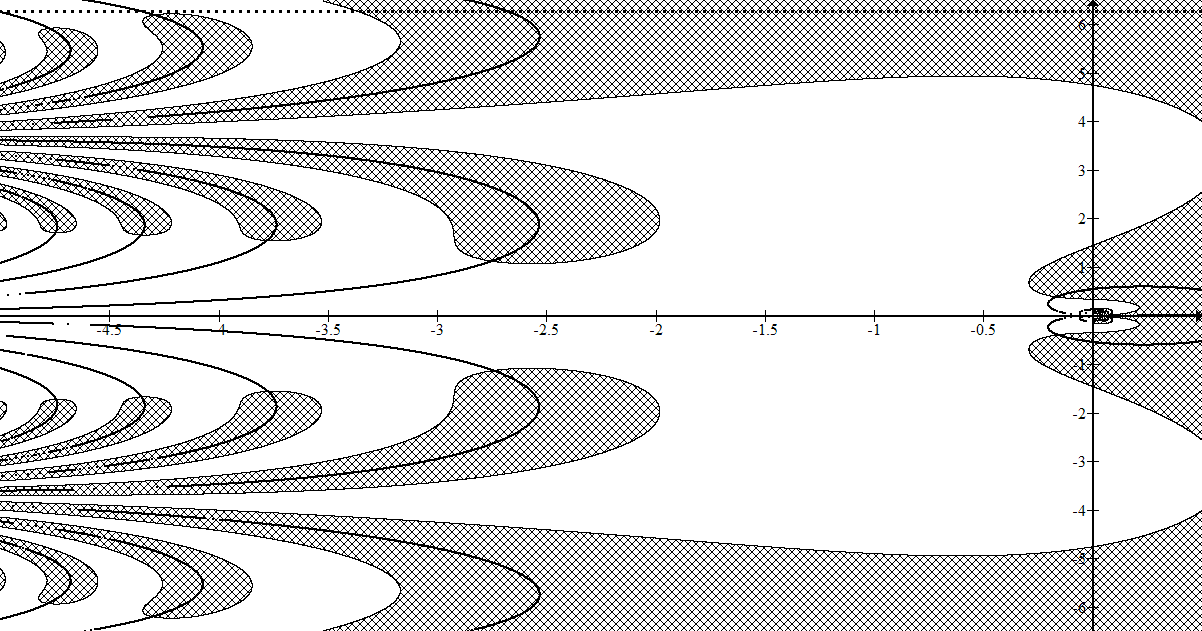}
\caption{Once again, the hatched area is $f_{32}^{-1}(\domain)$ and the black lines are the preimages of the positive real axis. Note, on the left, the curves that are approximated by the sets $A_n'$ from Lemma~\ref{lemm:longcurvesnearinfinity}. Note also that the line $y = 2\pi$, shown dotted, meets a preimage of the positive real axis inside $f_{32}^{-1}(\domain)$; see Lemma~\ref{lem:imag parts}.}\label{f1}
\end{figure}
Note that the lemma follows from this claim, by Observation~\ref{obs:real line} and the final part of Lemma~\ref{lem:preimages of the real line}.
%\end{lemma}
%\begin{remark}\normalfont
%Note that, taken in conjunction with the final part of Lemma~\ref{lem:preimages of the real line}, this result implies that there is a uniform upper bound on the difference between the imaginary parts of two points which lie in a connected set which does not meet $f^{-2}(\domain)$. This is, in fact, our goal with this result.
%\end{remark}
%(Note that graphs suggest that with $\lambda = 20$ this lemma holds for all $n \in \Z$.)
%\begin{proof}[Proof of Lemma~\ref{lem:subsets of I}]

To prove this, suppose that $y = 2n\pi$, where $n \in \N$ is large, and $z = x+ iy$. Then, by \eqref{eq:freal},
\[
\operatorname{Re} f(z) = \lambda \exp\left(\frac{e^{-x}x}{x^2+y^2}\right) \left(x \cos\left(\frac{e^{-x}y}{x^2+y^2}\right) + y \sin\left(\frac{e^{-x}y}{x^2+y^2}\right)\right).
\]

\pagebreak

We now consider three ranges of values of $x$. Recall that $\lambda \geq 32$. Suppose first that $x \geq 1$. If $n > 0$ is sufficiently large, then

\[
\operatorname{Re} f(z) \geq \lambda \left(x \cos\left(\frac{e^{-x}y}{x^2+y^2}\right) + y \sin\left(\frac{e^{-x}y}{x^2+y^2}\right)\right) \geq \frac{\lambda x}{2} \geq 2.
\]

Suppose next that $0 \leq x < 1$. If $n > 0$ is sufficiently large, then
\[
\operatorname{Re} f(z) \geq \lambda \left(x \cos\left(\frac{e^{-x}y}{x^2+y^2}\right) + y \sin\left(\frac{e^{-x}y}{x^2+y^2}\right)\right) \geq \frac{\lambda y}{2} \cdot \frac{e^{-x}y}{x^2+y^2} \geq \frac{\lambda}{4e} \geq 2.
\]

Suppose finally that $-(\log y + \log 2) \leq x < 0$. It follows that if $n > 0$ is sufficiently large, then, since $|x/y|$ can be assumed to be close to zero, we have 
\[
0 < \frac{e^{-x}y}{x^2+y^2} + \arctan(x/y) < 2,
\]
from which we can deduce that
\[
\sin\left(\frac{e^{-x}y}{x^2+y^2} + \arctan(x/y) \right) > \frac{e^{-x}y}{4(x^2+y^2)}.
\]

It follows that
\begin{align*}
\operatorname{Re} f(z) &= \lambda \exp\left(\frac{e^{-x}x}{x^2+y^2}\right) \left(x \cos\left(\frac{e^{-x}y}{x^2+y^2}\right) + y \sin\left(\frac{e^{-x}y}{x^2+y^2}\right)\right) \\
                       &= \lambda \exp\left(\frac{e^{-x}x}{x^2+y^2}\right) \cdot \sqrt{x^2 + y^2} \cdot \sin\left(\frac{e^{-x}y}{x^2+y^2} + \arctan(x/y) \right) \\
											 &\geq\lambda \exp\left(\frac{e^{-x}x}{x^2+y^2}\right) \cdot \frac{e^{-x}y}{4\sqrt{x^2+y^2}} \\
											 &\geq\lambda \exp\left(\frac{2xy}{x^2+y^2}\right) \cdot \frac{y}{4\sqrt{x^2+y^2}} \\
											 &\geq \frac{\lambda}{16} \\
											 &\geq 2.
\end{align*}
%where we have used the fact that the term in the sine function in the second line can be assumed to be between $\frac{e^{-x}y}{2(x^2+y^2)}$ and $2$.

In other words, if $n$ is sufficiently large, then the whole line segment $$\{ z = x + 2n\pi i : -(\log 2n\pi + \log 2) \leq x \}$$ maps into $\domain$. The proof is complete if we can show that there is a point of $f^{-1}(\R^+)$ somewhere on this line. When $y = 2n\pi$, equation \eqref{eq:inV} gives
\[
y \cos \left(\frac{e^{-x}y}{x^2+y^2}\right) = x \sin \left(\frac{e^{-x}y}{x^2+y^2}\right).
\]
It follows by a calculation that, when $|n|$ is large, this has a solution close to $$x = -(\log y + \log(\pi/2)).$$ This completes the proof.
\end{proof}
In the remainder of this paper we shall assume that $\lambda \geq 32$, so that the conclusions of Lemma~\ref{lem:imag parts} hold. \\

We now define some sets, which we call \emph{channels}, where the modulus of $f$ is either very large or very small. Suppose that $R \geq 1$ and, for definiteness, fix $\epsilon_0 = 1/4$. We define the following disjoint domains (which depend on $R$ only);
\[
C^+(R) \defeq \{ z = x + iy \in \Cstar : x > 0, \ |y| < \epsilon_0 |x|, \ |z| < 1 / R \};
\] 
\[
C^-(R) \defeq \{ z = x + iy \in \Cstar : x < 0, \ |y| < \epsilon_0 |x|, \ |z| < 1 / R \};
\] 
and for each $n \in \Z$,
\[
C_n(R) \defeq \{ z = x + iy \in \Cstar : x < -R (|n|+1), \ |y - n\pi| < \epsilon_0 \}.
\] 
Finally we set
\[
C(R) \defeq C^+(R) \cup C^-(R) \cup \bigcup_{n \in \Z} C_{n}(R).
\]
See Figure~\ref{f3} for a rough schematic of these sets. Note that if $1 \leq R < R'$, then $C(R') \subset C(R)$.

\begin{figure}
\includegraphics[width=\linewidth]{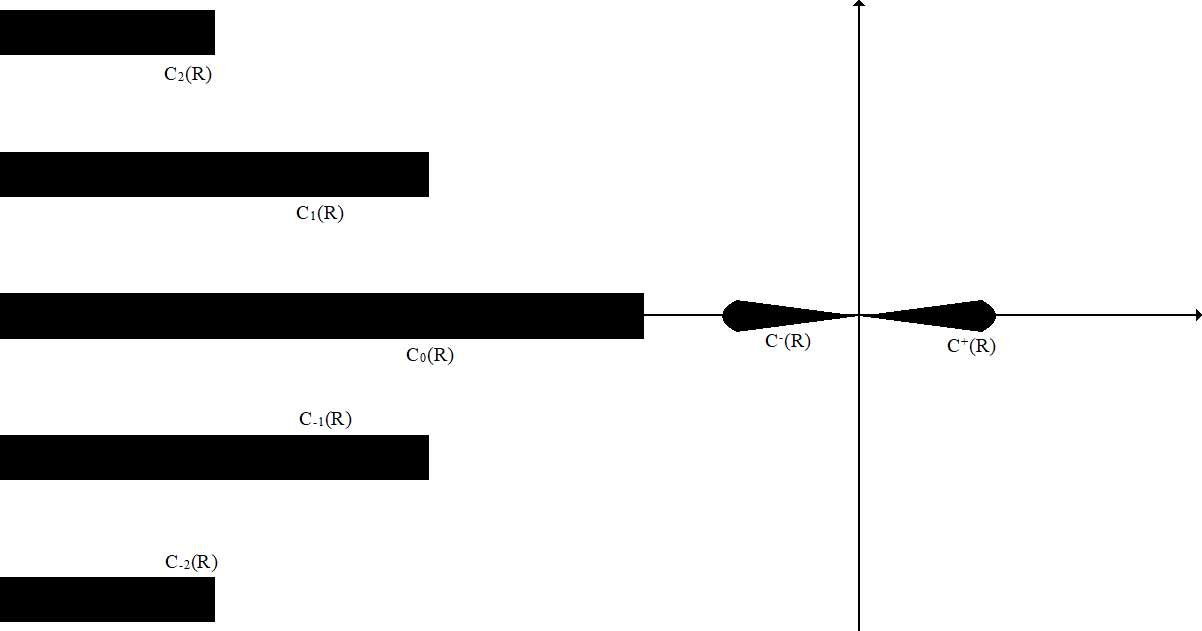}
\caption{An approximate schematic of the channels that make up $C(R)$.}\label{f3}
\label{fig:channels}
\end{figure}

We then have the following which shows that, in a suitable sense, the function $f$ ``blows up'' in the channels. Note that in this result a much faster rate of growth is possible, but this is not needed.
\begin{lemma}
\label{lemm:bigplaces}
There exists $K>1$ such that for all $L > 1$, there exist $R_0 > 1$ with the following property. Suppose that $z, z_0, z_1$ all lie in the same component, $W$ say, of $C(R_0)$. Suppose also that $|z_1/z_0| \geq K$. Then:
\begin{enumerate}
\item If $W = C^+(R_0)$, then \label{cplus}  $$|f(z)| \geq \frac{L}{|z|} \quad\text{and}\quad \left|\frac{f(z_0)}{f(z_1)}\right| \geq LK.$$
\item If $W = C^-(R_0)$, then \label{cminus} $$|f(z)| \leq \frac{|z|}{L} \quad\text{and}\quad \left|\frac{f(z_1)}{f(z_0)}\right| \geq LK.$$
\item If $W = C_{2n}(R_0)$, where $n \in \Z$, then \label{ceven}  $$|f(z)| \geq L|z| \quad\text{and}\quad \left|\frac{f(z_1)}{f(z_0)}\right| \geq LK.$$
\item If $W = C_{2n+1}(R_0)$, where $n \in \Z$, then \label{codd} $$|f(z)| \leq \frac{1}{L|z|} \quad\text{and}\quad \left|\frac{f(z_0)}{f(z_1)}\right| \geq LK.$$
\end{enumerate}
\end{lemma}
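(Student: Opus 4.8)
The plan is to push everything through formula~\eqref{fsize}. Writing $z=x+iy$ and $\xi(z)\defeq\operatorname{Re}(e^{-z}/z)=\frac{e^{-x}(x\cos y-y\sin y)}{x^2+y^2}$, that formula says $|f(z)|=\lambda|z|\,e^{\xi(z)}$, and for any two points $z',z''$ in a common channel it gives $\log|f(z')/f(z'')|=\log|z'/z''|+\xi(z')-\xi(z'')$. So all the assertions reduce to controlling the size of $\xi$, and its behaviour in $|z|$, on each of the sets $C^\pm(R_0)$ and $C_n(R_0)$. I would fix $K$ to be a moderate absolute constant (say $K=20$) at the very start, then, given $L$, choose $R_0$ large at the very end; the role of $K$ is only to fix the sign of $\xi(z_1)-\xi(z_0)$ and to swallow the ``loss'' coming from the factor $|z_0/z_1|^{\pm1}$, while all of the largeness is created by $R_0\to\infty$.

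First I would treat the near-zero channels. In $C^\pm(R_0)$ we have $|y|<|x|/4$ and $|z|<1/R_0$, so $x^2+y^2\asymp|z|^2$, $e^{-x}=1+O(|z|)$, and $x\cos y-y\sin y=x+O(|z|^2)$; hence $\xi(z)$ has the sign of $x$ and $\tfrac34/|z|\le|\xi(z)|\le\tfrac54/|z|$ once $R_0$ is large. In $C^+$ this gives $|f(z)|=\lambda|z|e^{\xi(z)}\ge\lambda|z|e^{(3/4)/|z|}$, which exceeds $L/|z|$ for $R_0$ large because $t^2e^{(3/4)/t}\to\infty$ as $t\to0^+$; for the ratio, $|z_1/z_0|\ge K$ gives $\xi(z_0)-\xi(z_1)\ge\tfrac34 K/|z_1|-\tfrac54/|z_1|>0$, and since $t\mapsto te^{(3/4)/t}$ is decreasing near $0$ one gets $\frac{|z_0|}{|z_1|}e^{\xi(z_0)-\xi(z_1)}\ge\tfrac1K e^{(3K/4-5/4)/|z_1|}\ge LK$ for $R_0$ large. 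The channel $C^-$ is handled symmetrically: $\xi(z)\le-(3/4)/|z|\le-\tfrac34 R_0$ gives $|f(z)|\le\lambda|z|e^{-\tfrac34 R_0}\le|z|/L$, and $|f(z_1)/f(z_0)|=\frac{|z_1|}{|z_0|}e^{\xi(z_1)-\xi(z_0)}\ge Ke^{(3K/4-5/4)/|z_1|}\ge LK$.

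Next the horizontal channels $C_n(R_0)$. The defining inequality $x<-R_0(|n|+1)$, together with $|y-n\pi|<\tfrac14$ (so $|y|\le(|n|+1)\pi$), forces $|x|\ge(R_0/\pi)|y|$, hence $x^2+y^2=x^2(1+O(R_0^{-2}))$; also $\cos y=(-1)^n\cos(y-n\pi)$ is bounded away from $0$ while $|y\sin y|\le\tfrac14|y|=o(|x|)$, so $x\cos y-y\sin y=(-1)^{n+1}\cos(y-n\pi)\,|x|\,(1+o(1))$ (the factor $(-1)^{n+1}$ because $\cos y$ has sign $(-1)^n$ and $x<0$). Therefore $\xi(z)=(-1)^{n+1}\cos(y-n\pi)(1+o(1))\,e^{|x|}/|x|$, with sign $(-1)^{n+1}$ and absolute value $\asymp e^{|x|}/|x|\to\infty$. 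The pointwise and ratio inequalities now follow as in the near-zero case, only more easily: $|z|\asymp|x|$ and the loss $\log|z_1/z_0|$ are polynomial in $|x|$ and are obliterated by $e^{\pm e^{|x|}/|x|}$; for the ratio one uses $|z_1/z_0|\ge K$ and $|x_i|>R_0$ to conclude $|x_1|\ge2|x_0|$, so that $e^{|x_1|}/|x_1|$ dwarfs $e^{|x_0|}/|x_0|$ and $\xi(z_1)-\xi(z_0)$ (up to sign) is enormous. The parity of $n$ then determines which of \eqref{ceven}, \eqref{codd} applies.

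I expect the real work to be in the horizontal channels, specifically in extracting---uniformly in $n$---the two facts $x^2\asymp x^2+y^2$ and $|y\sin y|=o(|x|)$ from the single inequality $x<-R_0(|n|+1)$; the first needs $|x|\gtrsim R_0|y|$ and the second needs $|x|$ to outgrow $|y|\asymp|n|$, and both are exactly what that inequality supplies. The other point requiring care is the order of quantifiers: one must check that one fixed $K$ works in all four cases and for every $n$ before $L$ (hence $R_0$) is chosen, which is fine because $K$ only has to clear an absolute threshold while $R_0$ does the heavy lifting. Everything else---the Taylor estimates for $\cos y$, $\sin y$, $e^{-x}$ and for $x^2+y^2$, and the elementary monotonicity of $t\mapsto t e^{c/t}$---is routine and I would compress it to a few lines.
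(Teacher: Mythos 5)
Your proposal is correct and follows essentially the same route as the paper's proof: both reduce everything to the formula $|f(z)| = \lambda|z|e^{\xi(z)}$ from~\eqref{fsize}, estimate $\xi(z)=\operatorname{Re}(e^{-z}/z)$ on each channel (obtaining $\xi\asymp\pm1/|z|$ in $C^\pm$ and $\xi\asymp\pm e^{|x|}/|x|$ in $C_n$, with sign governed by the parity of $n$), fix an absolute constant $K$ at the outset precisely so that $|z_1/z_0|\geq K$ forces a definite multiplicative gap in the real parts, and then let $R_0\to\infty$ carry all the dependence on $L$. The paper phrases its estimates in terms of $x$ rather than $|z|$ and uses the threshold $|x_1|\geq 8|x_0|$ where you use $K=20$, but these are cosmetic variations of the same argument.
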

\begin{proof}
We begin by fixing values of $t>0$ and $K>1$ sufficiently large that if $z_0 = x_0 + i y_0$ and $z_1 = x_1 + iy_1$ both lie in the same component of  $C(t)$ and $|z_1/z_0| \geq K$, then $|x_1| \geq 8|x_0|$. 

Now suppose that $L>1$. Consider first case \eqref{cplus}. Suppose that $z = x + iy$, $z_0 = x_0 + i y_0$ and $z_1 = x_1 + iy_1$ are all points of $C^+(R_0)$ with $|z_1/z_0| \geq K$. 

Then, provided that $R_0>t$ is sufficiently large,
\[
\frac{2}{x} \geq \frac{e^{-x}}{x^2+y^2}(x \cos y - y \sin y) \geq \frac{1}{2x}. 
\]
Thus, by \eqref{fsize}, if $R_0>t$ is sufficiently large, then
\[
|f(z)| \geq \lambda x \exp\left(\frac{1}{2x}\right) \geq \frac{L}{x} \geq \frac{L}{|z|}.
\]
Similarly, if $R_0>t$ is sufficiently large, then
\[
|f(z_0)| \geq \lambda x_0 \exp\left(\frac{1}{2x_0}\right) \geq 2LK \lambda x_1 \exp\left(\frac{2}{x_1}\right) \geq LK |f(z_1)|.
\]

Case \eqref{cminus} is very similar, and is omitted. For case \eqref{ceven}, suppose that $z = x + iy$, $z_0 = x_0 + i y_0$ and $z_1 = x_1 + iy_1$ are all points of $C_n(R_0)$, for some even $n \in \N$, with $|z_1/z_0| \geq K$. Recall that $|y-n\pi| < 1/4$. Then, provided that $R_0>t$ is sufficiently large we can assume that $|x|$ is large compared to $y$, and so 
\[
\frac{2e^{-x}}{|x|} \geq \frac{e^{-x} \cos(y+\arctan(y/x))}{\sqrt{x^2+y^2}} \geq \frac{e^{-x}}{2|x|}. 
\]
The remainder of the proof of this case is very similar to the remainder of the proof of case \eqref{cplus}. Finally, case \eqref{codd} is very similar to the proof of the case \eqref{ceven} and is omitted.
\end{proof}
Our next two results are somewhat similar. They both say, roughly, that a suitable curve that does not meet $f^{-2}(\domain)$ must contain a subcurve which lies in the channels. Each result requires us to locate the preimages of $\R^+$, and so find the solutions to \eqref{eq:inV}. We do this by estimating these solutions; first near the origin, and then for points of large, negative imaginary part.
\begin{lemma}
\label{lemm:longcurvesnearzero}
There exists $r_0 > 0$ and $L_0 > 1$ with the following property. Suppose that $R \geq r_0$, that $L \geq L_0$, that $K>1$, and that $z_0, z_1 \in \Cstar \setminus f^{-2}(\domain)$ are two points of modulus less than $1/R$, such that $|z_1/z_0| \geq LK$. Suppose also that $\gamma \subset \Cstar \setminus f^{-2}(\domain)$ is a curve from $z_1$ to $z_0$. Then there is a curve $\gamma' \subset \gamma \cap C(R)$ joining endpoints $z_0'$ and $z_1'$, with $|z_1'/z_0'| \geq K$. More precisely, $\gamma'$ lies in either $C^+(R)$ or $C^-(R)$.
\end{lemma}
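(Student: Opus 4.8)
\emph{Strategy.} The plan is to describe $f^{-2}(\domain)$ in a punctured neighbourhood of $0$ precisely enough to see that the channels $C^+(R)$ and $C^-(R)$ are essentially the only parts of $\Cstar\setminus f^{-2}(\domain)$ that connect widely separated scales there, and then to extract $\gamma'$ by an elementary argument in plane topology. Two inclusions make $f^{-2}(\domain)$ tractable: Lemma~\ref{lemm:growth} gives $f(\domain)\subset\domain$, so $f^{-1}(\domain)\subset f^{-2}(\domain)$; and Observation~\ref{obs:real line} gives $f(\R^+)\subset\domain$, so $V\defeq f^{-1}(\R^+)\subset f^{-2}(\domain)$. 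Hence it suffices to analyse near $0$ the region $f^{-1}(\domain)=\{z:\operatorname{Re}f(z)\ge 2\}$, using \eqref{eq:freal}, and the curves of $V$, using \eqref{eq:inV} together with Lemma~\ref{lem:preimages of the real line}.

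\emph{The structural claim.} Write $z=\rho e^{i\theta}$ with $\rho$ small. From \eqref{fsize}, $|f(z)|$ is astronomically large when $\cos\theta>0$ and astronomically small when $\cos\theta<0$, while by \eqref{eq:freal} the sign of $\operatorname{Re}f(z)$ changes rapidly with $\rho$. Solving \eqref{eq:inV} locates the curves of $V$ near $0$: besides $\R^+$ and the curve $\Gamma$ of Remark~\ref{rem:r1}, they are Jordan loops based at $0$, lying in the upper or lower half-plane, whose diameters shrink to $0$ and which reach $0$ tangentially to $\R^+$ and to $\R^-$. The claim to prove is: there exist an absolute constant $L_0>1$ and some $r_0>0$ such that, for every $R\ge r_0$, any connected subset of $\big(\{0<|z|<1/R\}\setminus f^{-2}(\domain)\big)\setminus\big(C^+(R)\cup C^-(R)\big)$ has modulus ranging over a factor at most $L_0$. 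The point is that, below scale $1/R$ and away from the real axis, for a set of scales $\rho$ that is dense in the logarithmic scale the entire circle $\{|z|=\rho\}$ lies in $f^{-2}(\domain)$ --- either because $\operatorname{Re}f\ge2$ there, or because $f$ sends the circle to a point $w$ of enormous or minuscule modulus on which one further application of $f$ lands in $\domain$ --- so the complement breaks up there into thin annular cells; and the modulus cut-off $|z|<1/R$ built into $C^\pm(R)$ is precisely what prevents the loops of $V$ (and the analogous loops of $f^{-1}(\R^-)$, the preimages of the negative axis) from generating an unbounded-ratio cell. By contrast $C^+(R)$ and $C^-(R)$ genuinely contain components of $\Cstar\setminus f^{-2}(\domain)$ that descend all the way to $0$, along $\R^-$ and along the many loops of $f^{-1}(\R^-)$ near $\R^+$, on which the first two iterates of $f$ stay near $\{0\}$ and so never enter $\domain$.

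\emph{Extraction of $\gamma'$.} Granting this, suppose $R\ge r_0$ and $L\ge L_0$. We may assume $|z_0|\le|z_1|$, so $|z_0|<1/(RLK)$; replacing $\gamma$ by the subcurve issuing from $z_0$ that is maximal in $\{|z|\le1/R\}$, we obtain a curve in $\{|z|\le1/R\}\setminus f^{-2}(\domain)$ from $z_0$ to a point of modulus at least $LK|z_0|$. A routine topological argument --- cover the range of moduli by finitely many round annuli of bounded ratio, observe that the curve must cross each of them, and use the structural claim to see that it cannot cross any one of them while avoiding $C^+(R)\cup C^-(R)$ --- then produces a subcurve of $\gamma$ lying entirely in $C^+(R)$ or entirely in $C^-(R)$ and joining two points whose moduli differ by a factor of at least $K$; here the factor $L$ in the hypothesis is exactly the slack needed to absorb the portions near modulus $1/R$ and the bounded excursions outside the channels. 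This subcurve is the required $\gamma'$.

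\emph{Main difficulty.} The heart of the proof, and its longest part, is the structural claim, and within it the genuinely delicate ingredient is the control of the \emph{second} iterate of $f$: at a point of small modulus off the real axis, $f(z)$ already has enormous or minuscule modulus and a very rapidly winding argument, so deciding whether $f(f(z))\in\domain$ forces one to estimate $\operatorname{Re}f(w)$ via \eqref{eq:freal} for $w$ of huge modulus and to track how $\arg f^2(z)$ winds as $\rho$ varies. What one must extract from this is that the ``stray'' components of $\Cstar\setminus f^{-2}(\domain)$ arising near the imaginary axis, and more generally away from $\R$, cannot bridge widely separated scales. Combined with the precise asymptotics of the solutions of \eqref{eq:inV} needed to pin down how the loops of $V$ and the region $\{\operatorname{Re}f\ge2\}$ partition a punctured neighbourhood of $0$, this is where essentially all the work lies.
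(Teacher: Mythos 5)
Your high-level structural claim is correct --- it is essentially what the paper proves --- but the mechanism you propose to justify it is false, and the ``main difficulty'' you flag is not where the work actually lies. You assert that for a dense set of scales $\rho < 1/R$ the entire circle $\{|z|=\rho\}$ lies in $f^{-2}(\domain)$, either because $\operatorname{Re} f\ge 2$ there or because $f$ sends the circle to points $w$ of enormous or minuscule modulus with $f(w)\in\domain$. Both alternatives fail near the imaginary axis. At $z=i\rho$ with $\rho$ small one has $e^{-z}/z\approx -i/\rho-1$, so $f(z)\approx (\lambda/e)\,i\rho\, e^{-i/\rho}$; its modulus is comparable to $\rho$ (neither enormous nor minuscule), so certainly $\operatorname{Re} f(z)<2$, and when $1/\rho$ is near a multiple of $2\pi$ the point $f(z)$ again lies near the imaginary axis of comparable modulus, whence $|f^2(z)|=O(\rho)$ and $f^2(z)\notin\domain$. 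Thus no circle $\{|z|=\rho\}$ with $\rho$ small lies entirely in $f^{-2}(\domain)$. The fallback in your parenthetical is also wrong: a point $w$ of huge modulus near $i\R$ has $f(w)\approx\lambda w\notin\domain$, and a point of tiny modulus near $i\R$ stays tiny under $f$; so ``enormous or minuscule modulus'' does not imply the next image is in $\domain$. Consequently the ``routine topological argument'' in your extraction step has nothing to stand on as written, and the delicate second-iterate winding analysis you identify as the heart of the matter is both genuinely hard and, as it turns out, unnecessary.

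The paper avoids all of this by using only the inclusion $V=f^{-1}(\R^+)\subset f^{-2}(\domain)$ from Observation~\ref{obs:real line}, together with Lemma~\ref{lem:preimages of the real line}. Expanding \eqref{eq:inV} near $0$ shows that the curves of $V$ there are asymptotically the circles
$A_n = \bigl\{x^2+(y-\tfrac{1}{2n\pi})^2=(\tfrac{1}{2n\pi})^2\bigr\}$,
tangent to the real axis at the origin --- not concentric circles about $0$. A curve $\gamma\subset\Cstar\setminus f^{-2}(\domain)$ must avoid $V$, so near the origin it is confined to the crescent between two consecutive $A_n$ and $A_{n+1}$. A direct geometric computation (using $\epsilon_0=1/4$, which is where the constant $\sqrt{17}$ comes from) shows that all points of this crescent of modulus below roughly $\tfrac{1}{n\pi\sqrt{17}}$ already lie in $C^+(R)\cup C^-(R)$, while the whole crescent has modulus at most about $\tfrac{1}{n\pi}$; hence outside the channels the crescent spans a modulus ratio of at most $\sqrt{17}$, and one may take $L_0=2\sqrt{17}$. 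This is a purely first-iterate, geometric argument about the barrier $V$; no estimate on $\operatorname{Re}f^2$ or on the winding of $\arg f^2$ is ever needed.
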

\begin{proof}
Recall, from Observation~\ref{obs:real line}, that $f^{-1}(\R^+) \subset f^{-2}(\domain)$. Recall also that if the point $x + iy \in f^{-1}(\R^+)$, then the pair $x, y$ is a solution to \eqref{eq:inV}. We begin, therefore, by estimating the solutions to \eqref{eq:inV} near the origin; see Figure~\ref{f2}. 

\begin{figure}
\includegraphics[width=\linewidth]{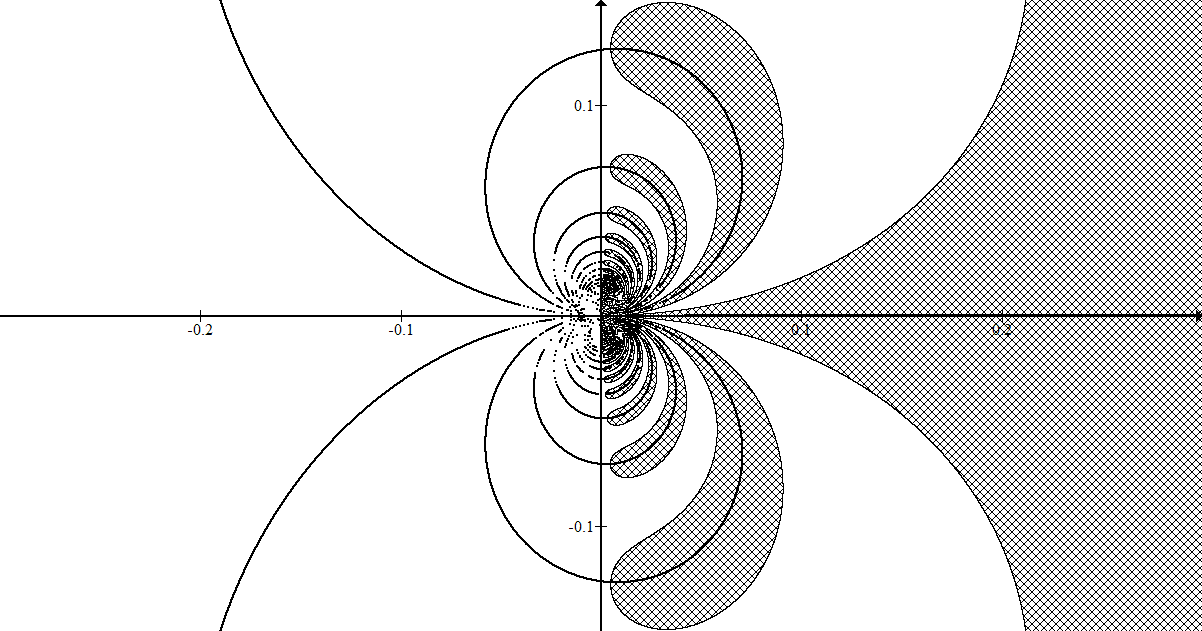}
\caption{In this figure the hatched area is $f_{32}^{-1}(\domain)$ and the black lines are the preimages of the positive real axis. Notice that, as calculated in Lemma~\ref{lemm:longcurvesnearzero}, the preimages of the positive real axis are approximately circles. Note that there is some distortion near the origin.}\label{f2}
\end{figure}

By the penultimate part of Lemma~\ref{lem:preimages of the real line},  the solutions to \eqref{eq:inV} near the origin do not cross the real axis. It follows that, near the origin, solutions to \eqref{eq:inV} can be written in the form
\begin{equation}
\label{eq:est}
\frac{e^{-x}}{x^2 + y^2}(x \sin y  + y \cos y) = n \pi + O(1), \qfor n \in \Z \text{ as } |n| \rightarrow \infty,
\end{equation}
where the $O(1)$ term is equal to $\arctan(y/x)$. By estimating the functions in \eqref{eq:est} near the origin, we obtain that in a small neighbourhood of the origin
\begin{equation}
\label{eq:est2}
x^2 + \left(y - \frac{1}{2n\pi}\right)^2 = \left(\frac{1}{2n\pi}\right)^2 + (x^2+y^2) \cdot O\left(\frac{1}{n}\right), \qfor n \in \Z \text{ as } |n| \rightarrow \infty.
\end{equation}
Hence, when $|n|$ is large, the preimages of the positive real axis are close to circles $A_n$, where, for $n \in \Z$, $A_n$ is given by
\[
A_n \defeq \left\{ z = x + iy \in \C : x^2 + (y-p)^2 = p^2 \text{ where } p = \frac{1}{2n\pi} \right\}.
\]

Suppose that $z_0, z_1$ and $\gamma$ are as in the statement of the lemma. We can assume that these all lie in the upper half-plane. Recall that $\gamma$ cannot meet a preimage of the real line. Hence, if $R$ is sufficiently large, then $z_0$ and $z_1$ must lie in a domain which is very close to being the crescent between $A_n$ and $A_{n+1}$, for some large value of $n$. 

It follows from \eqref{eq:est2} that all points in this crescent have modulus that is less than $\frac{1}{n\pi} + O\left(\frac{1}{n^2}\right)$. By a geometric calculation, we find that all points in this crescent of modulus less than $\frac{1}{n\pi\sqrt{17}} + O\left(\frac{1}{n^2}\right)$ lie in $C(R)$. (The constant $17$ here follows from the fact that $\epsilon_0 = 1/4$.) The result then follows with $L_0 = 2\sqrt{17}$. 
\end{proof}
The second lemma is similar in many respects to the first, though the proof is more complicated.
\begin{lemma}
\label{lemm:longcurvesnearinfinity}
There exists $r_\infty > 0$ and $L_\infty > 1$ with the following property. Suppose that $R \geq r_\infty$, that $L \geq L_\infty$, that $K>1$, and that $z_0, z_1 \in \Cstar \setminus f^{-2}(\domain)$ are two points of modulus greater than $R$, such that $|z_1/z_0| \geq LK$. Suppose also that $\gamma \subset \Cstar \setminus f^{-2}(\domain)$ is a curve from $z_1$ to $z_0$. Then there is a curve $\gamma' \subset \gamma \cap C(R)$ joining endpoints $z_0'$ and $z_1'$, with $|z_1'/z_0'| \geq K$. More precisely, $\gamma'\subset C_n(R)$ for some $n\in \Z$.
\end{lemma}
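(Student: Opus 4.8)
The plan is to follow the template of the proof of Lemma~\ref{lemm:longcurvesnearzero}, with the preimages of $\R^+$ near infinity playing the role that the near-circular preimages $A_n$ played near the origin.

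The first and main task is to locate $V = f^{-1}(\R^+)$ for points $z = x+iy$ of large modulus. By Lemma~\ref{lem:preimages of the real line}, every curve of $V$ other than $\R^+$ that tends to infinity does so with $x\to-\infty$ and $\operatorname{Im} z$ bounded, and by \eqref{eq:cantcross} no such curve meets the locus $x\sin y + y\cos y = 0$, whose branches are asymptotic to the horizontal lines $\{y = n\pi\}$. I would then analyse \eqref{eq:inV} directly in the regime $x\to-\infty$: since $e^{-x}$ is enormous, a solution can occur only where $x\sin y + y\cos y$ is correspondingly tiny, hugging one of those barriers, and imposing in addition that $\tfrac{e^{-x}}{x^2+y^2}(x\sin y + y\cos y)$ be close to an integer multiple of $\pi$ (plus the $\arctan(y/x)$ correction that appeared in \eqref{eq:est}) pins down a whole family of branches of $V$ accumulating on each line $y = n\pi$. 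From these estimates I would define explicit approximating curves $A_n'$, the near-infinity analogues of the circles $A_n$, which are the curves visible in Figure~\ref{f1}.

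With $V$ located, I would describe the unbounded part of $\Cstar\setminus f^{-2}(\domain)$. Combining the previous step with Lemma~\ref{lem:imag parts} (which places horizontal half-lines through $\pm 2n\pi i$ inside $f^{-2}(\domain)$) and Observation~\ref{obs:real line}, this part consists of thin crescent-like regions, each trapped between two branches of $V$ that pinch together towards a line $y = n\pi$ as $x\to-\infty$, and each bounded in the direction of increasing real part. Since the curve $\gamma\subset\Cstar\setminus f^{-2}(\domain)$ cannot cross $V$, once $R\geq r_\infty$ is large enough to discard the bounded pieces, the endpoints $z_0,z_1$, both of modulus $>R$ with $|z_1/z_0|\geq LK$, must lie in a single such region $W$. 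A modulus estimate on $W$, of the same flavour as the ``$\sqrt{17}$'' computation in Lemma~\ref{lemm:longcurvesnearzero}, shows that every point of $W$ of sufficiently large modulus lies in $C(R)$, in fact in the single channel $C_n(R)$ that $W$ tapers into; I would then take $\gamma'$ to be the sub-curve of $\gamma\cap W$ on which the modulus exceeds a threshold comparable to $|z_0|$, obtaining $\gamma'\subset C_n(R)$ with endpoint-modulus ratio $\geq K$ once $L_\infty$ has been fixed large enough to absorb the implied constants, exactly as $L_0 = 2\sqrt{17}$ was chosen in the near-origin case.

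The hard part is the middle step. Near the origin the preimages of $\R^+$ are essentially round circles and the geometry reduces to an elementary Euclidean computation; near infinity the channels $C_n(R)$ are unbounded horizontal half-strips, $V$ has a more delicate multi-branch structure hugging the barriers $\{y = n\pi\}$, and, crucially, one must control two levels of preimage rather than one, verifying both that the branches of $V$ genuinely pinch towards $y = n\pi$ and that $\Cstar\setminus f^{-2}(\domain)$, not merely $\Cstar\setminus f^{-1}(\domain)$, behaves as claimed, including in the transition zone near the real axis where the crescents open up. Obtaining all of these asymptotics uniformly in $n$, so that a single pair of constants $r_\infty, L_\infty$ works, is where essentially all of the effort goes.
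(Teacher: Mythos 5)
Your proposal follows essentially the same route as the paper's proof: estimate the solutions of \eqref{eq:inV} in the regime $|y/x|$ small and $x\to-\infty$, introduce the approximating curves $A_n'$, invoke Lemma~\ref{lem:imag parts} together with Observation~\ref{obs:real line} to confine the relevant subcurve of $\gamma$ to a single region between consecutive branches of $f^{-1}(\R^+)$, and then observe that, sufficiently far to the left, that region sits inside a single channel $C_n(R)$, choosing $L_\infty$ to absorb the horizontal extent of the transition zone. The paper's only notable presentational difference is that it locates the rightmost point $x_n = -\log|n| + O(\log\log|n|)$ of each $A_n'$ and uses $x < x_n - \ell$ as the explicit cut-off into the channel, whereas you frame the same cut-off as a modulus threshold, but these are equivalent.
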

\begin{proof}
We begin by estimating solutions to \eqref{eq:inV} for points $z = x + iy$ where $|y/x|$ is small; see Figure~\ref{f1}. Observe that when this is the case, solutions to \eqref{eq:inV} can be written in the form
\[
\frac{e^{-x}}{x^2 + y^2}(x \sin y  + y \cos y) = n \pi + O(|y/x|), \qfor n \in \Z.
\]
It follows by the compound angle formula that
\begin{equation}
\label{eq:inV3}
\sin(y + \arctan(y/x)) = e^x \cdot \sqrt{x^2 + y^2} \cdot \left(n\pi + O(|y/x|)\right), \qfor n \in \Z.
\end{equation}

%For $n \in \Z$, let $A_n'$ denote the set of points $z = x + iy$ that are solutions to \eqref{eq:inV3}. 
%
Hence when $n$ is large compared to $|y/x|$, the preimages of the positive real axis are close to the sets $A_n'$ where, for $n \in \Z$,
\[
A_n' \defeq \left\{ z = x + iy \in \C\, :\, \sin(y + \arctan(y/x)) = e^x \cdot \sqrt{x^2 + y^2} \cdot n\pi \right\}.
\]

When $n \ne 0$, no point on $A_n'$ (and indeed no preimage of the real line) can meet the set of points that are solutions to \eqref{eq:cantcross}, which we know are approximately the horizontal lines $\{ z=x + iy \in \C\, :\, y = m\pi\}$, for some $m \in \Z$. 

For all sufficiently large (and negative) values of $x$, the solution to \eqref{eq:inV3} is close to $y = m\pi$ for some $m \in \Z$. Note that points of large negative real part, and imaginary part close to an even multiple of $\pi$ map near the origin under $f$, and points of large negative real part, and imaginary part close to an odd multiple of $\pi$ map near infinity under $f$. We can deduce that $A_n'$ is a curve that  starts in the far left asymptotic to $y = m\pi$, travels some distance to the right, and then returns to the left asymptotic to $y = (m+1)\pi$. In particular, since $|\sin t| \leq 1$, for $t \in \R$, the rightmost point of $A_n'$ is where
\[
e^x \cdot \sqrt{x^2 + y^2} \cdot \left(n\pi + O(|y/x|)\right) = \pm 1.
\]
For large values of $|n|$, this gives $x = x_n = -\log |n| + O(\log \log |n|)$. Observe that there exists $\ell > 0$, independent of $n$, such that for all sufficiently large values of $|n|$, if $x + iy \in A_n'$ and $x < x_n - \ell$, then $x + iy \in C(R)$.
 
Suppose that $K, z_0, z_1$ and $\gamma$ are as in the statement of the lemma, and let $L>0$ be large. Let $\delta > 0$ be small. It follows from Lemma~\ref{lem:imag parts} that if $L$ is sufficiently large, then there is a subcurve $\tilde{\gamma}$ of $\gamma$, which joins $z_1$ to a point $\tilde{z_0}$ with $|z_1/\tilde{z_0}| \geq LK/2$, and such that if $z = x + iy \in \tilde{\gamma}$, then $|z| > R$ and $|y/x| < \delta$. We can assume that $\tilde{\gamma}$ lies in the upper half-plane.

Recall that $\tilde{\gamma}$ cannot meet the preimages of the positive real line. In the part of the plane where $\tilde{\gamma}$ lies we know that $|y/x|$ is small. So these points are close to the solutions to \eqref{eq:inV3}. In other words $\tilde{\gamma}$ can be assumed to lie in the region very close to that between (in the obvious sense) $A_n'$ and $A_{n+1}'$, for some large value of $|n|$.

As noted earlier, all points in this region have real part less than $$x_n = -\log |n| + O(\log \log |n|).$$ All points in this region of real part less that $x_n - \ell$ lie in $C(R)$. The result of the lemma holds, for sufficiently large $r_\infty > 0$, with (for example) $L_\infty = 4(\ell + 2\pi)$.
\end{proof}

We are now able to complete the proof, by showing that any component of $\Cstar \setminus I$ is bounded in $\Cstar$. To prove this, suppose by way of contradiction that $X$ is such a component that is not bounded in $\Cstar$. Let $K$ be the constant from Lemma~\ref{lemm:bigplaces}. First we choose
\[
L > \max \{ 2, L_0, L_\infty \},
\]
where $L_0$ is the constant in Lemma~\ref{lemm:longcurvesnearzero} and $L_\infty$ is the constant in Lemma~\ref{lemm:longcurvesnearinfinity}. For this value of $L$, we let $R_0$ be the constant from Lemma~\ref{lemm:bigplaces}. We then choose
\[
R > \max \{ 2, R_0, r_0, r_\infty \},
\] 
where $r_0$ is the constant in Lemma~\ref{lemm:longcurvesnearzero} and $r_\infty$ is the constant in Lemma~\ref{lemm:longcurvesnearinfinity}.

For $\rho > 1$, we use the notation
\[
D_\rho \defeq \{ z \in \Cstar : |z| \geq \rho \ \text{ or } |z| \leq 1/\rho \}.
\]
When $\rho$ is large, points of $D_\rho$ have either large modulus or small modulus.

We construct sequences of points $(z^0_k)_{k\geq 0}$ and $(z^1_k)_{k\geq 0}$, and curves $(\gamma_k)_{k\geq 0}$, with $\gamma_0 \subset X$, and with the following properties, for each $k \geq 0$:
\begin{enumerate}[(I)]
\item $\gamma_k \subset C(R) \cap D_{2^kR}$ is a curve from $z^0_k$ to $z^1_k$. \label{en:gamma}
\item $|z^1_k/z^0_k| \geq K$. \label{en:size}
\item $\gamma_{k+1} \subset f(\gamma_k)$. \label{en:image}
\end{enumerate}

Note that \eqref{en:gamma} says that $\gamma_k$ is in one of the channels that make up $C(R)$ and, in addition, is either near the origin, or far from the origin (in which case it is also far to the left). % Note also that \eqref{en:size} implies that, for large values of $k$, $z^1_k$ and $z^0_k$ have very different moduli, and so $\gamma_k$ is ``long'' in the sense mentioned earlier. % (Near infinity ``long'' means long, whereas near the origin ``long'' means that the image of the curve under the map $z \mapsto 1/z$ is long).

Note also that this construction completes the proof. To see this, observe that \eqref{en:image}, together with Lemma~\ref{lem:slow}, imply the existence of a point in $z \in \gamma_0 \subset X$ which satisfies $f^k(z) \in \gamma_k$. This in turn implies, by \eqref{en:gamma}, that $z \in I$, which is a contradiction.

The construction is by induction. The start of the induction is straightforward; it follows from Lemma~\ref{lemm:longcurvesnearzero} and Lemma~\ref{lemm:longcurvesnearinfinity}, together with our assumption that $X$ is unbounded in $\Cstar$, that we can choose $z^0_0$, $z^1_0$ and $\gamma_0$ with properties \eqref{en:gamma} and \eqref{en:size}.

Now suppose that $k \geq 0$, and we have chosen $z^0_k$, $z^1_k$ and $\gamma_k$ with all the required properties. Set $\Gamma_k = f(\gamma_k)$. It follows from Lemma~\ref{lemm:bigplaces}, and the fact that $L>2$, that $\Gamma_k \subset D_{2^k R}$. Moreover, it is a consequence of Observation~\ref{obs:eventually in I} that $\Gamma_k$ cannot meet $f^{-2}(\domain)$.

Since $|z^1_k/z^0_k| \geq K$, we have by Lemma~\ref{lemm:bigplaces} that $$\max\{|f(z^1_k)/f(z^0_k)|, |f(z^0_k)/f(z^1_k)|\} \geq LK.$$ 

We then complete the construction with an application of either Lemma~\ref{lemm:longcurvesnearzero} if $\Gamma_k \subset \{ z \in \Cstar : |z| \leq 1/(2^kR) \}$, or Lemma~\ref{lemm:longcurvesnearinfinity} if $\Gamma_k \subset \{ z \in \Cstar : |z| \geq 2^k R \}$. This completes the construction, and so completes the proof.

%\end{proof}

%
%%%%

%
%%%%
%
%\bibliographystyle{acm}
\bibliography{Spiderswebs}
\end{document}